\documentclass[11pt]{amsart}
\usepackage{amscd,amssymb,graphics}

\usepackage{amsfonts}
\usepackage{amsmath}
\usepackage{amsxtra}
\usepackage{latexsym}
\usepackage[mathcal]{eucal}

\usepackage{graphics,colortbl}

\input xy
\xyoption{all}
\usepackage{epsfig}

\usepackage[pdftex,bookmarks,colorlinks,breaklinks]{hyperref}

\oddsidemargin 0.1875 in \evensidemargin 0.1875in
\textwidth 6in 
\textheight 230mm 
\voffset=-4mm


\newtheorem{thm}{Theorem}[section]
\newtheorem{cor}[thm]{Corollary}
\newtheorem{lem}[thm]{Lemma}
\newtheorem{prop}[thm]{Proposition}

\newtheorem{question}[thm]{Question}

\theoremstyle{definition}
\newtheorem{defin}[thm]{Definition}

\theoremstyle{remark}
\newtheorem{remark}[thm]{Remark}
\newtheorem{remarks}[thm]{Remarks}

\newtheorem{examples}[thm]{Examples}

\numberwithin{equation}{section}

\numberwithin{equation}{section}

\newcommand{\delete}[1]{} 
\newcommand{\nt}{\noindent}

\def\eps{{\varepsilon}}
\newcommand{\sk}{\vskip 0.2cm}

\newcommand{\ben}{\begin{enumerate}}
	
	\newcommand{\een}{\end{enumerate}}
\newcommand{\bit}{\begin{itemize}}
	
	\newcommand{\eit}{\end{itemize}}


\def\eps{{\varepsilon}}

\def\al{\alpha}

\def\Om{\Omega}

\def\s{\sigma}
\def\ga{\gamma}

\def\a{\alpha}

\newcommand{\Del}{\Delta}

\def\Iso{\operatorname{Iso}}
\def\Asp{\operatorname{Asp}}
\def\RUC{\operatorname{RUC}}
\def\WAP{\operatorname{WAP}}

\def\SUC{\operatorname{SUC}}

\newcommand{\Tame}{$\mathrm{Tame}\ $}
\newcommand{\Sturm}{$\mathrm{Sturm}\ $}
\newcommand{\WRN}{$\mathrm{WRN}\ $}

\def\Sp{{\mathrm{Split}}}
\newcommand{\card}{\rm{card\,}}


\def\R {{\mathbb R}}
\def\N {{\mathbb N}}
\def\Z {{\mathbb Z}}

\def\T {{\mathbb T}}

\def\A{{\mathcal{A}}}

\newcommand{\lan}{\langle}
\newcommand{\ran}{\rangle}



\newcommand{\br}{\vspace{2 mm}}

\newcommand{\cls}{{\rm{cls\,}}}

\def\diam{{\mathrm{diam}}}
\def\Iso{{\mathrm{Iso}}\,}
\def\Aut{{\mathrm Aut}\,}

\def\wrt{with respect to }

\def\QED{\nobreak\quad\ifmmode\roman{Q.E.D.}\else{\rm Q.E.D.}\fi}


\begin{document}

\title[]{Circularly ordered dynamical systems} 

\author[]{Eli Glasner}
\address{Department of Mathematics,
	Tel-Aviv University, Ramat Aviv, Israel}
\email{glasner@math.tau.ac.il}
\urladdr{http://www.math.tau.ac.il/$^\sim$glasner}

\author[]{Michael Megrelishvili}
\address{Department of Mathematics,
	Bar-Ilan University, 52900 Ramat-Gan, Israel}
\email{megereli@math.biu.ac.il}
\urladdr{http://www.math.biu.ac.il/$^\sim$megereli}

\date{August 29, 2016}

\thanks{This research was supported by a grant of the Israel Science Foundation (ISF 668/13)}

\keywords{circular order, linear order, enveloping semigroup, tame dynamical system, Sturmian system, subshift, symbolic system, Rosenthal space}

\subjclass[2010]{Primary 37Bxx, 46-xx; Secondary 54H15, 26A45}


\begin{abstract}
We study 
topological properties of 
 circularly ordered dynamical systems and prove that every such system 
 is representable on a Rosenthal Banach space, hence, is also tame. 
We derive some consequences for topological groups. 
 We show that several Sturmian like symbolic  $\Z^k$-systems are circularly ordered. 
Using some old results 
we characterize circularly ordered minimal cascades.  
\end{abstract}

\maketitle

\setcounter{tocdepth}{1}
 \tableofcontents

\section*{Introduction} 

In this work we study circularly ordered dynamical systems. 
A {\it circular order} ({\it c-order} in short) on a set is, intuitively speaking, a linear order which has been bent into a ``circle". 
This concept has its roots at least in an old article of Huntington \cite{Hunt}. 
It has several applications in Geometry, Combinatorics,  Logic, Algebraic Topology and General Topology. 
 
We give some applications of c-order in Topological Dynamics. 
We say that a dynamical $G$-system $X$ is a {\it c-orderable} if $X$, as a topological space, is c-ordered 
and  for each $g \in G$, the corresponding translation $\tilde{g}: X \to X$ is 
c-order preserving (Definition \ref{d:COsystem}).  
We investigate the following question: 

\begin{question} \label{q:1} 
Which  compact (minimal) dynamical $G$-systems are c-ordered ? 
In particular, which symbolic $\Z^k$-systems are c-ordered ?  
\end{question} 

Note that (in contrast to the circular order case) every linearly ordered minimal $G$-space is trivial. Indeed every compact linearly ordered space has its minimum and maxumum elements,
the ``end points" of the system, which are necessarily fixed. 

%
As we will see  c-orderable dynamical systems are tame.
Tame dynamical systems were introduced by A. K\"{o}hler \cite{Koh}
and their theory was later developed in a series of works by several authors
(see e.g. \cite{Gl-tame,GM1,GM-rose,H,KL,Gl-str,Rom}).  
Recently, connections to other areas of mathematics like Banach spaces, coding theory,
substitutions and tilings, and even model theory and logic, were established 
(see e.g. \cite{Auj,Ibar, Ch-Si} and the survey \cite{GM-survey} for more details). 

We recall that 
a metric dynamical $G$-system $X$ is tame if and only if the enveloping semigroup $E(X)$ 
has cardinality at most $2^{\aleph_0}$ \cite{GM1,GM-AffComp}, 
iff every element $p \in E(X)$ of its enveloping semigroup is Baire class 1 function $p: X \to X$
\cite{GMU}. 
Other characterizations of tameness are given using the combinatorial notion of 
independence 
\cite{KL}. 
Finally, the metrizable tame systems are exactly those systems which admit 
a representation on a separable Rosenthal Banach space \cite{GM-rose}
(a Banach space is called \emph{Rosenthal} if it does not contain an isomorphic copy of $l_1$). 
For a survey of this theory we refer the reader to \cite{GM-survey}.

\br

Using tame families and families of functions with bounded variation we show that 
(even non-metrizable)
circularly ordered dynamical systems are representable on Rosenthal Banach spaces (Theorem \ref{t:CoisWRN}). 
Rosenthal representable dynamical systems are tame (see \cite{GM-rose}, or Theorems \ref{Tamecriterion} and \ref{WRNcriterion} below). 

Another approach to obtaining 
tameness 
is via the property of being null. 
For a general group action being null implies being tame.  
This is a result of Kerr and Li \cite{KL}
who used independence properties  to characterize these classes.
In Theorem \ref{null} we show easily that c-ordered systems are null, hence tame.

As we show below, many naturally defined symbolic systems, e.g. Sturmian-like systems, 
are circularly ordered
(Theorem \ref{t:multi}). It thus follows that these systems are tame. 
 
 \sk

Notation:
\ben 
\item As in \cite{GM-rose,GM-tame} we denote by 
\WRN (respectively, RN) the class of Rosenthal (respectively, Asplund) representable dynamical systems and by \Tame the class of 
tame systems, Definition \ref{d:tameDS}.  
(Recall that a Banach space is Asplund iff its dual has the Radon-Nikodym property.)
Within the class of metrizable systems, $\mathrm{WRN} = \mathrm{Tame}$. 
\item 
$\mathrm{LODS}$ and $\mathrm{CODS}$ will denote 
the classes of linearly ordered and  circularly ordered compact dynamical systems, respectively. Note that 
$\mathrm{LODS} \subset \mathrm{CODS}$. 
\item 
We will denote by  \Sturm the class of Sturmian dynamical systems. 
\een

\sk

$\mathrm{Sturm}$ is an important and intensively studied class of binary 
symbolic systems 
whose origins go back, at least, to the classical work of Morse and Hedlund \cite{MH}. 
The following inclusions are nontrivial: 

\centerline{\Sturm $\subset \mathrm{CODS} \subset$ \WRN $\subset \mathrm{Tame}$.}

Theorems \ref{t:multi} and \ref{t:CoisWRN} yield the first two inclusions. 
For the inclusion \WRN $\subset \mathrm{Tame}$, we refer to  \cite{GM-rose,GM-tame}. 
In particular, $\mathrm{LODS} \subset \mathrm{WRN}$. 
 We note that this latter fact gives a valuable information even
in the purely topological case (applying it to trivial identity actions), see Section \ref{s:topology}.  
We derive also some consequences for topological groups. In particular, Theorem \ref{t:GrRepr} shows that for every 
c-ordered compact space $X$ the topological group $H_+(X)$ of all c-order preserving homeomorphisms (equipped with the  compact open topology) is Rosenthal representable. 

In Section \ref{s:m} 
we show that several Sturmian like symbolic  $\Z^k$-systems are circularly orderable (and minimal). 
For the case $k=1$ this result gives a new proof of a result by Masui \cite{Masui}. 
 Multidimensional Sturmian like $\Z^k$-systems are studied, e.g.,
in \cite{BFZ,Fernique}.

Using old results concerning circle homeomorphisms (in particular, Markley's results about Denjoy's minimal systems, \cite{Ma}) we characterize, in Section \ref{s:MinC-Ord}, 
infinite circularly ordered metric minimal cascades.

\br

\section{Preliminaries}  

We use the notation of \cite{GM-rose, GM-survey}. 
By a topological space we mean a Tychonoff (completely regular Hausdorff) space. 
The closure operator in topological spaces
will be denoted by $\cls$.  
A function $f: X \to Y$ is \emph{Baire class 1 function} if the inverse image $f^{-1}(O)$ of every open set $O$ is
$F_\sigma$ in $X$. 
For a pair of topological spaces $X$ and $Y$, $C(X,Y)$ is the  
set of continuous functions from $X$ into $Y$. 
We denote by $C(X)$ the Banach 
algebra of {\em bounded} continuous real 
valued functions even when 
$X$ is not necessarily compact.

All semigroups $S$ are assumed to be monoids, 
i.e., semigroups with a neutral element which will be denoted by $e$.  
A (left) {\em action} of $S$ on a space $X$ is a map $\pi : S \times X \to X$ such that
$\pi(st,x) = \pi(s,\pi(t,x))$ for every $s, t \in S$ and $x \in X$. We usually simply write
$sx$ for $\pi(s,x)$.
Also actions are \emph{monoidal} (meaning $ex=x, \forall x \in X$). 

An $S$-\emph{space} is a topological space $X$ equipped with a 
continuous action $\pi: S \times X \to X$ of 
a 
topological semigroup $S$ on the space $X$. 
A compact $S$-space $X$ is called a \emph{dynamical $S$-system} and is denoted by $(S,X)$.
Note that in \cite{GM-rose} and \cite{GM-AffComp} we deal with the more general case of separately continuous  actions. We reserve the symbol $G$ for the case where $S$ is a topological group. 
As usual, a continuous map $\a : X \to Y$ between two $S$-systems is called an {\em $S$-map}
or {\em a homomorphism}  
when $\a(sx)=s\a(x)$ for every $(s,x) \in S \times X$.  

For every $S$-space $X$ we have a monoid homomorphism
$j: S \to C(X,X)$, $j(s)=\tilde{s}$,
where $\tilde{s}: X \to X, x \mapsto sx=\pi(s,x)$ is the {\em $s$-translation} ($s \in S$). 

The {\em enveloping semigroup} $E(S,X)$ (or just $E(X)$) 
for a compact $S$-system
is defined as the pointwise closure ${\cls}(j(S))$ 
of $\tilde{S}=j(S)$ in $X^X$. Then $E(S,X)$ 
is a compact right topological monoid (i.e. right multiplication
$q \mapsto qp, \  q \in E(S,X)$, is continuous for every $p \in E(S,X)$).

By a \emph{cascade} on $X$ we mean a $\Z$-action $\Z \times X \to X$.
When dealing with cascades we usually write $(X,\sigma)$, where $\sigma$ is the $s$-translation $X \to X$ corresponding to $s=1$ ($0$ acts as the identity), instead of $(\Z, X)$.

\subsection{Some classes of dynamical systems}
\label{s:classes} 


We are mainly interested in the class of tame dynamical systems.
For the history of this notion
we refer to
\cite{Gl-tame} and \cite{GM-rose}. 

\begin{defin} \label{d:tameDS} 
A compact dynamical $S$-system $X$ is said to be \emph{tame} if for every $f \in C(X,\R)$ the family $fS:=\{fs: s \in S\}$  has no $l_1$-subsequence 
(where, 
$(fs)(x):=f(sx)$); see Definition \ref{d:l1}.  
Equivalently, if $fS$ has no independent subsequence (see Theorem \ref{f:sub-fr}, Definitions \ref{d:l1} and \ref{d:ind}). 
\end{defin}

The following principal result is a dynamical analog
of the  Bourgain-Fremlin-Talagrand dichotomy \cite{BFT,TodBook}.

\begin{thm} \label{D-BFT}
	\cite{GM1} \emph{(A dynamical version of BFT dichotomy)}
	Let $X$ be a compact metric dynamical $S$-system and let $E=E(X)$ be its
	enveloping semigroup. 
Either
	\begin{enumerate}
		\item
		$E$ is a separable Rosenthal compact (hence $E$ is Fr\'echet and ${card} \; {E} \leq
		2^{\aleph_0}$); or
		\item
		the compact space $E$ contains a homeomorphic
		copy of $\beta\N$ (hence ${card} \; {E} = 2^{2^{\aleph_0}}$).
	\end{enumerate}
	The first possibility
	holds iff $X$ is a tame $S$-system.
\end{thm}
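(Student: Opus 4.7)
My plan is to reduce the statement to the classical Bourgain--Fremlin--Talagrand (BFT) dichotomy for pointwise closures in $\R^X$, and the bridge I would use is the following family of continuous maps. For every $f \in C(X,\R)$, introduce
$$\Phi_f : E(X) \to \R^X, \qquad p \mapsto f \circ p,$$
which is continuous when both sides carry the pointwise topology. Since $\tilde S$ is dense in the compact space $E(X)$, and $E(X)$ is compact, we have $\Phi_f(E(X)) = \cls(fS) \subseteq \R^X$. This packages each family $fS$ into $E(X)$ as a continuous image of the whole enveloping semigroup, allowing me to transfer the classical BFT dichotomy on $\R^X$ into conclusions about $E(X)$.

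\emph{Tame direction.} Assuming no $fS$ contains an $l_1$-subsequence, by Rosenthal's $l_1$-theorem every sequence in $fS$ has a pointwise convergent subsequence; the classical BFT dichotomy applied to the Polish space $X$ then says that $\cls(fS) \subseteq \R^X$ is a separable Rosenthal compact consisting of Baire class $1$ functions. I would then choose a countable dense $\{f_n\} \subseteq C(X)$: since $\{f_n\}$ separates points of $X$, the diagonal map
$$\Psi : E(X) \hookrightarrow \prod_{n \in \N} \cls(f_n S), \qquad p \mapsto (f_n \circ p)_{n},$$
is a continuous injection from a compact Hausdorff space, hence a topological embedding. Finally I would appeal to the fact that a countable product of separable Rosenthal compacts is again a separable Rosenthal compact (reinterpret the product as Baire class $1$ functions on the Polish disjoint sum $\bigsqcup_n X$), concluding that $E(X)$ is a separable Rosenthal compact. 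The Fr\'echet property and the bound $|E(X)| \leq 2^{\aleph_0}$ are then the standard consequences of angelicity of Rosenthal compacta.

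\emph{Non-tame direction.} If some sequence $(fs_n)$ is equivalent to the unit vector basis of $l_1$, then by the heart of Rosenthal's argument the pointwise closure of $(fs_n)$ in $\R^X$ contains a homeomorphic copy of $\beta\N$. The bounded sequence $(\tilde s_n)$ in the compact Hausdorff space $E(X) \subseteq X^X$ extends, by the universal property of $\beta\N$, to a continuous map $j : \beta\N \to E(X)$ with $j(n) = \tilde s_n$. The composition $\Phi_f \circ j : \beta\N \to \cls(fS)$ sends $\beta\N$ onto the Rosenthal copy of $\beta\N$ inside $\cls(fS)$, hence is injective, forcing $j$ to be injective as well and embedding $\beta\N$ into $E(X)$; the cardinality $2^{2^{\aleph_0}}$ is then automatic. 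The main obstacle I anticipate is in the tame direction: surjecting $E(X)$ onto each $\cls(fS)$ is easy, but to conclude that $E(X)$ itself is a separable Rosenthal compact I must use the nontrivial closure of Rosenthal compacts under countable products, together with the fact that a compact continuous injective image (equivalently, a closed subspace) of a Rosenthal compact remains Rosenthal.
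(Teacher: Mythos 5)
The paper itself contains no proof of Theorem \ref{D-BFT}; it is imported verbatim from \cite{GM1}. Your reconstruction follows essentially the same route as the original argument: transfer the classical BFT dichotomy from the orbit closures $\cls_p(fS) \subseteq \R^X$ into $E(X)$ via the maps $\Phi_f$ together with a diagonal embedding into a countable product of Rosenthal compacta, and, in the non-tame case, push a copy of $\beta\N$ into $E(X)$ through the canonical extension of $(\tilde s_n)$. The skeleton is sound, and the auxiliary facts you invoke (countable products and closed subspaces of Rosenthal compacta are Rosenthal, realized as $B_1$ of a countable disjoint sum of Polish spaces) are correct and standard.

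Two steps need repair, though both are local. First, separability of $E(X)$ does not follow from separability of the ambient product: a closed subspace of a separable compactum need not be separable. Instead, observe that $j(S) \subseteq C(X,X)$ is separable in the uniform metric (since $X$ is compact metric), hence also in the coarser pointwise topology, and $E(X) = \cls_p(j(S))$; this gives separability of $E$ unconditionally, independent of tameness. Second, in the non-tame direction the inference ``$\Phi_f \circ j$ maps onto a copy of $\beta\N$, hence is injective'' is not valid as stated, since a continuous surjection of $\beta\N$ onto $\beta\N$ need not be injective. The correct argument: pass first, via Theorem \ref{f:sub-fr}, from an $l_1$-subsequence to an \emph{independent} subsequence $(fs_n)$ with constants $a<b$; then for disjoint $A, B \subseteq \N$ the closed sets $\bigcap_{n \in P} (fs_n)^{-1}((-\infty,a]) \cap \bigcap_{n \in M}(fs_n)^{-1}([b,\infty))$, over finite $P \subseteq A$, $M \subseteq B$, have the finite intersection property, so compactness of $X$ yields a point $x$ at which evaluation separates $\cls_p\{fs_n : n \in A\}$ from $\cls_p\{fs_n : n \in B\}$. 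This shows the canonical extension $\Phi_f \circ j$ is injective, whence $j$ embeds $\beta\N$ into $E(X)$. With these two fixes the proof is complete; the ``iff'' clause then follows since the alternatives are mutually exclusive by the cardinality (or Fr\'echet) obstruction.
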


Thus, a metrizable dynamical system is tame iff
$\card(E(X)) \leq 2^{\aleph_0}$ iff $E(X)$ is a Rosenthal compactum (or a Fr\'echet space).
Moreover, by \cite{GMU} a metric $S$-system is tame iff every $p \in E(X)$
is a Baire class 1 map $p: X \to X$. By \cite{GM-rose, GM-AffComp}, 
a general (not necessarily, metrizable) compact $S$-system $X$ is tame 
iff every $p \in E(X)$ is a \textit{fragmented map}, Definition \ref{d:frag} (equivalently, Baire 1, when $X$ is metrizable). 
Basic properties and applications of fragmentability in topological dynamics can be found in \cite{GM-rose, GM-survey, GM-tame}.

Recall that a dynamical $S$-system $X$ is weakly almost periodic ($\mathrm{WAP}$) if and only if every $p \in E(X)$ is a continuous map. So, every WAP system is tame. The class of hereditarily nonsensitive systems ($\mathrm{HNS}$) is an intermediate class of systems, \cite{GM-survey}, $\mathrm{WAP} \subset \mathrm{HNS} \subset \mathrm{Tame}$. A metrizable $S$-system $X$ is HNS iff $(S,X)$ is RN iff $E(X)$ is metrizable, \cite{GM1,GMU}. 
The class of tame dynamical systems is quite large. 
It is closed under subdirect products and factors.

\subsection{Some classes of functions}

A \emph{compactification} of $X$ is a continuous map $\ga: X \to Y$  with a dense range where $Y$ is 
compact.  
When $X$ and $Y$ are $S$-spaces and $\ga$ is an $S$-map we say that $\ga$ is 
an \emph{$S$-compactification}. 

A function $f \in C(X)$ on an $S$-space $X$ is said to be Right Uniformly Continuous if the induced right action $C(X) \times S \to C(X)$ is continuous at the point $(f,e)$, where $e$ is the identity of $S$. Notation: $f \in \RUC(X)$. 
If $X$ is a compact $S$-space then $\RUC(X)=C(X)$.  
Note that $f \in \RUC(X)$ if and only if there exists an  
$S$-compactification $\ga: X \to Y$ such that $f= \tilde{f} \circ \ga$ for some $\tilde{f} \in C(Y)$. 
In this case we say that $f$ \emph{comes} from the $S$-compactification $\ga: X \to Y$. 
%


The function $f$ is said to be: a)
\emph{WAP}; b) \emph{Asplund}; c)  \emph{tame}
if  $f$ comes from an $S$-compactification $\ga: X \to Y$
such that $(S,Y)$ is: WAP, HNS or tame respectively.
For the corresponding classes of functions we use the notation:
$\WAP (X)$, $\Asp(X),$ $\mathrm{Tame}(X)$, respectively. Each of these is a
norm closed
$S$-invariant subalgebra of the $S$-algebra
$\RUC(X)$
and
$\WAP(X) \subset \Asp(X) \subset \mathrm{Tame}(X).$
For more details see \cite{GM-AffComp,GM-survey}. 
 As a particular case 
we have defined the algebras
$\WAP(S)$, $\Asp(S)$, $\mathrm{Tame}(S)$ corresponding to the left action of $S$ on $X:=S$.

	\subsection{Symbolic systems}
	
	The binary \emph{Bernoulli shift system} is defined as the cascade $(\Omega,\s)$, where $\Omega:=\{0,1\}^{\Z}$.
	We have the natural
	$\Z$-action on the compact metric space  
	$\Omega$ induced by the $\s$-shift:
	$$
	\Z \times \Omega \to \Omega, \ \ \ \s^m (\omega_i)_{i \in \Z}=(\omega_{i+m})_{i \in \Z} \ \ \ \forall (\omega_i)_{i \in \Z} \in \Omega, \ \ \forall m \in \Z.
	$$
	
	More generally, for a
	discrete group $G$ and a finite alphabet $\Delta$ 
	the compact space $\Delta^G$ is a compact $G$-space under the action
	$$G \times \Delta^G \to \Delta^G, \ \ (s \omega)(t)=\omega (ts),\ \omega \in \Delta^G, \ \ s,t \in G.$$
	A closed $G$-invariant subset $X \subset\Delta^G$ defines a
	subsystem $(G,X)$. Such systems are called {\em subshifts\/} or
	{\em symbolic dynamical systems\/}. 
%
%
%
%
	

\subsection{Coding functions}


\begin{defin} \label{d:tametype1} \
	\begin{enumerate}
		\item
		Let $G \times X \to X$ be an action on a (not necessarily compact) space
		$X$,
		$f: X \to \R$ a bounded (not necessarily continuous)
		function, and $z \in X$. Define a \emph{coding function} as follows:
		$$
		\varphi:=m(f,z) : G \to \R, \ g \mapsto f(gz).
		$$ 
		\item
		When $G=\Z^k$ and $f(X)=\{0,1,\dots,d\}$ we say
		that $f$ is a  
		\emph{$(k,d)$-code}. Every such code generates a point transitive subshift $G_{\varphi}$ of $\Delta^G$,
		where $\Delta=\{0,1, \dots, d\}$ and 
		$$
G_{\varphi} : = \cls_p \{g \varphi : g \in G\} \subset  \Delta^G \ \ \ \ (\text{where} \ g \varphi(t)=\varphi(tg))
$$ 
is the pointwise closure of the left $G$-orbit $G \varphi$ in the space $\{0,1, \cdots, d\}^G$.
		
		\item 
		In the particular case where
		$\chi_D: X \to \{0,1\}$
		is the characteristic function 		
		of a subset $D \subset X$ and $G=\Z$, we get a $(1,1)$-code.
	\end{enumerate}
\end{defin}

\sk 

Regarding some dynamical and combinatorial aspects of coding functions 
see \cite{Fernique,BFZ}. 

\begin{question} \label{q:coding} 
	When is a coding function  $\varphi$ tame ? 
	Equivalently, 
	when is the associated transitive subshift system $G_{\varphi} \subset \{0,1\}^{\Z}$, with $\varphi =	m(D,x_0)$  tame? 
	When are such subshifts c-ordered ? 
\end{question}


\begin{remarks} \label{r:codes} \ 
\ben 
\item Some restrictions on $D$ are really necessary because \emph{every} binary bisequence 
\newline $\varphi: \Z \to \{0,1\}$ can be encoded as $\varphi=	m(D,x_0)$. 
\item It follows from results in \cite{GM-rose} that a coding bisequence
$c: \Z \to \R$ is tame iff it can be represented
as a generalized matrix coefficient of a Rosenthal Banach space representation.
That is, iff there exist: a Rosenthal Banach space $V$, a linear isometry $\s \in \Iso(V)$ and
two vectors $v \in V$, $\phi \in V^*$ such that
$$
c_n=\langle \s^n(v),\varphi \rangle = \phi(\s^n(v)) \ \ \ \ \forall n \in \Z.
$$ 
\een
\end{remarks}

\sk 
Recall (see for example \cite{BFZ}) that a bisequence $\Z \to \{0,1\}$ is \emph{Sturmian} if it is recurrent and has the minimal complexity $p(n)=n+1$.

\begin{defin} \label{d:StCode} 
	Let $P_0$ be the set
	$[0, t)$ and $P_1$ the set $[t, 1)$; let $z$ be a point in $[0, 1)$ 
	(identified with $\T$) via the rotation $R_{\a}$ we get the binary bisequence 
	$$\varphi: \Z \to \{0,1\}, n \mapsto \varphi(n)=s_n,$$ 
	by $s_n = 0$ when $R_{\a}^n(z)  \in P_0, s_n = 1$ otherwise.
	Equivalently $\varphi$ is defined as 
	$m(\chi_D,z)$ for $D:=[t,1)$. 
	These are called \emph{Sturmian like codings}.
	With $D=[1 - \a,1)$ we get the classical \emph{Sturmian bisequences}. 
	About a realization of the corresponding subshift see \cite{GM1} (and Example \ref{Sturm1} below). 
	For example, when $\a:=\frac{\sqrt{5}-1}{2}$ and $c = 1 -\a$ the corresponding sequence,
	computed at $z =0$, is called the \emph{Fibonacci bisequence}.
	
	Every Sturmian bisequence $\varphi$ induces a minimal symbolic system $\Z_{\varphi} \subset \{0,1\}^{\Z}$, which is said to be a \emph{Sturmian dynamical system}. Similarly one defines  \emph{Sturmian like systems}.  
\end{defin}

%

\br

\section{C-ordered topological spaces and systems}

First we give  
one of the most conventional axiomatic for the concept of circular ordering. 

\begin{defin} \label{newC} \cite{Kok,Cech} 
	Let $X$ be a set. A ternary relation $R \subset X^3$ on $X$ is said to be a {\it circular} (or, sometimes, \emph{cyclic}) order  
	if the following four conditions are satisfied. It is convenient sometimes to write shortly $[a,b,c]$ instead of $(a,b,c) \in R$. 
	\ben
	\item Cyclicity: 
	$[a,b,c] \Rightarrow [b,c,a]$;  
	
	\item Asymmetry: 
	$[a,b,c] \Rightarrow (a, c, b) \notin R$; 
		
	\item Transitivity:    
	$
	\begin{cases}
	[a,b,c] \\
	[a,c,d]
	\end{cases}
	$ 
	$\Rightarrow [a,b,d]$;
	
	\item Totality: 
	if $a, b, c \in X$ are distinct, then \ $[a, b, c]$ $\vee$ $[a, c, b]$. 
	\een
\end{defin}


\begin{lem} \label{l:property} For every c-order on $X$ we have:
	\ben 
	\item  $[a,b,c]$ implies that $a,b,c$ are distinct.  
%
	
	\item $
	\begin{cases}
	[c,a,x] \\
	[c,x,b]
	\end{cases}
	$ 
	$\Rightarrow [a,x,b]$. 
	\een
\end{lem}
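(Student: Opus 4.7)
For part (1), the plan is to rule out each of the three possible coincidences $a=b$, $a=c$, $b=c$ by deriving a contradiction from asymmetry (together with cyclicity when needed). The case $b=c$ is immediate: asymmetry applied to $[a,b,b]$ yields $(a,b,b)\notin R$, contradicting the hypothesis $[a,b,b]$ itself. The cases $a=b$ and $a=c$ reduce to this pattern by first applying cyclicity once or twice to rotate the repeated letter: from $[a,a,c]$ one gets $[a,c,a]$ via cyclicity, whereas asymmetry applied to $[a,a,c]$ says $(a,c,a)\notin R$, a contradiction; and from $[a,b,a]$ two applications of cyclicity give $[a,a,b]$, contradicting the instance $(a,a,b)\notin R$ obtained from asymmetry.

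For part (2), the strategy is to re-index the two hypotheses via cyclicity so that they share a common first coordinate, apply transitivity, and then rotate the conclusion back. Concretely, I apply cyclicity twice to $[c,a,x]$ to obtain $[x,c,a]$, and apply cyclicity once to $[c,x,b]$ to obtain $[x,b,c]$. Now both triples start with $x$ and they fit the transitivity schema $[p,q,r] \wedge [p,r,s] \Rightarrow [p,q,s]$ with $(p,q,r,s)=(x,b,c,a)$, yielding $[x,b,a]$. Two more applications of cyclicity then give $[a,x,b]$, which is what we wanted.

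The only real subtlety, if one can call it that, is recognizing the correct rotation pattern: the transitivity axiom as formulated in Definition \ref{newC} is not symmetric in the roles of its four arguments, so one must line up the points $x,b,c,a$ in precisely the order above so that the shared first coordinate and the shared middle-to-end link through $c$ match the axiom. Once this alignment is found, the derivation is a short formal manipulation, and part (1) is used only tacitly to know that the triples appearing are honest instances of $R$ rather than degenerate ones.
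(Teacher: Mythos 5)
Your derivation is correct: both the case analysis for (1) via asymmetry and cyclicity, and the rotation-then-transitivity argument for (2) (aligning $[x,b,c]$ and $[x,c,a]$ to conclude $[x,b,a]$, hence $[a,x,b]$), check out against Definition \ref{newC}. The paper states this lemma without proof, treating it as a routine consequence of the axioms, and your argument is precisely the intended one --- indeed, since the axioms apply unconditionally, even your tacit appeal to part (1) in part (2) is dispensable.
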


\sk

For distinct $a,b \in X$ define the (oriented) \emph{intervals}: 
$$
(a,b)_R:=\{x \in X: [a,x,b], \ \ \ \  [a,b]_R:=(a,b) \cup \{a,b\}. 
$$

Sometimes we drop the subscript when context is clear.

\begin{prop} \label{Hausdorff}  \
	\ben 
	\item \cite[p. 6]{Kok} 
	For every c-order $R$ on $X$ the family of intervals 
	$$
	\{(a,b)_R : \ \  a,b \in X\}
	$$
	forms  a base for a topology $\tau_R$ on $X$ which we call the \emph{interval topology} of $R$. 
	
	\item The topology $\tau_R$ of every circular order $R$ is Hausdorff. 
	\een 
\end{prop}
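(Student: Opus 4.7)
For Part (1), my plan is simply to cite Kok \cite{Kok}; the required verification that the intervals $(a,b)_R$ cover $X$ (for $|X|\ge 3$) and satisfy the base intersection property is routine from the axioms. Part (2) is the substantive claim, and I would reduce Hausdorffness of $\tau_R$ to the classical Hausdorffness of the order topology of a linear order, via the standard ``cut'' construction which linearises a circularly ordered set after removal of a point.

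Fix distinct $x, y \in X$; if $|X|\le 2$ there is nothing to separate, so I assume a third point exists and pick $z \in X \setminus \{x,y\}$. On $X \setminus \{z\}$ define $a <_z b \iff [z,a,b]$. The first step is to verify that $<_z$ is a strict linear order: asymmetry follows from axiom (2), totality from axiom (4), and transitivity is exactly the instance of axiom (3) with first coordinate $z$.

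The second and main step is to identify the trace of $\tau_R$ on $X \setminus \{z\}$ with the order topology of $<_z$. For $a,b \in X \setminus \{z\}$ with $a <_z b$, I would prove
$$(a,b)_R \cap (X \setminus \{z\}) = \{u : a <_z u <_z b\},$$
and separately that $(z,b)_R = \{u : u <_z b\}$ and $(a,z)_R = \{u : a <_z u\}$ (both interpreted in $X \setminus \{z\}$). The reverse inclusion in the first equality is Lemma \ref{l:property}(2) applied with anchor $z$. The forward inclusion is two-part: given $[a,u,b]$ and $[z,a,b]$, one gets $[z,a,u]$ by cycling $[z,a,b]$ to $[a,b,z]$ and applying axiom (3) to $[a,u,b]\wedge[a,b,z]$, then cycling back; to get $[z,u,b]$ one rules out the alternative $[z,b,u]$ by noting that $[z,b,u]\Rightarrow[u,z,b]$ (cyclicity) and $[a,u,b]\Rightarrow[u,b,a]$ (cyclicity), whereupon Lemma \ref{l:property}(2) with new anchor $u$ yields $[z,b,a]$, contradicting $[z,a,b]$ by axiom (2).

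Once these identifications are in place, Hausdorffness follows at once from the standard linear-order argument: after swapping names if necessary, $x <_z y$; if there exists $u \in X\setminus\{z\}$ with $x <_z u <_z y$, then $(z,u)_R \ni x$ and $(u,z)_R \ni y$ are disjoint; otherwise $y$ is the immediate successor of $x$ in $<_z$, and $(z,y)_R \ni x$ and $(x,z)_R \ni y$ are disjoint because any $v$ in the intersection would satisfy $x <_z v <_z y$. The main obstacle is not conceptual but notational: juggling the six cyclic permutations of each triple while reapplying cyclicity, asymmetry, and axiom (3). Once the cut $<_z$ is introduced, every subsidiary claim is a one-line consequence of the axioms.
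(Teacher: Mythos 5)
Your proof is correct; I checked each manipulation. The verification that $<_z$ is a strict linear order is as you say (asymmetry from axiom (2), totality from axiom (4), transitivity from axiom (3) anchored at $z$); in the trace identification $(a,b)_R \cap (X\setminus\{z\}) = \{u : a<_z u <_z b\}$ the reverse inclusion is indeed Lemma \ref{l:property}(2) with anchor $z$, the forward inclusion via axiom (3) applied to $[a,u,b]\wedge[a,b,z]$ gives $[a,u,z]$, hence $[z,a,u]$, and your exclusion of $[z,b,u]$ (cycling to $[u,z,b]$, $[u,b,a]$ and applying Lemma \ref{l:property}(2) with anchor $u$ to reach $[z,b,a]$, contradicting asymmetry) is valid; the ray identifications are definitional up to one cyclicity; and both branches of the separation argument (a point strictly between $x$ and $y$, or $y$ an immediate successor) produce disjoint basic intervals, with $z$ automatically excluded since intervals omit their endpoints. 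You should know, however, that the paper gives no proof of this proposition at all: part (1) is delegated to Kok's monograph and part (2) is stated bare, with the cut construction appearing only in Remark \ref{r:chech}(2) and being exploited later in the proof of Proposition \ref{cover}. Your argument is in effect the missing proof, and it runs on exactly the machinery the paper does display for Proposition \ref{cover}: your reverse inclusion ($a<_z u<_z b \Rightarrow [a,u,b]$) is literally case 1) of the continuity argument there, and your ray identifications mirror its case 2), so your route is the natural completion of the paper's implicit approach rather than a departure from it. What your write-up buys is a self-contained deduction of Hausdorffness from the four axioms, including the immediate-successor case that the linear-order argument requires. The only gloss is the degenerate case: for $|X|=2$ the interval family is empty and does not cover $X$, so $\tau_R$ is not even well defined by this base and the proposition implicitly presumes at least three points; your remark that there is then ``nothing to separate'' is slightly off for $|X|=2$, but this is a defect of the statement's formulation, not of your argument.
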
 
%
%
%
%
%
%

The prototypical example of a c-order the usual 
(counter-clockwise) circular ordering on the circle $\T$. Identify
$\T$, as a set, with $[0,1)$ and
define a ternary relation $R \subset [0,1)^3$ as follows
$$(x, y, z) \in R \Leftrightarrow (y-x)(z-y)(z-x) > 0.$$ 
Its topology $\tau_R$ gives the usual topology on $\T$. 

By a linear order $<$ on $X$ we mean a transitive relation which is totally ordered, meaning that for distinct $a,b \in X$ 
we have exactly one of the alternatives: $a<b$ or $b<a$. As usual, $a \leq b$ will mean that $a < b \vee a=b$.    

For every linearly ordered set $(X,<)$ the rays $(a,\to)$, $(\leftarrow,b)$, with $a,b \in X$, form a subbase for the
standard \emph{interval topology} $\tau_{<}$ on $X$.
It is well known that the interval topology is always Hausdorff (and even normal). 
A topological space is said to be \emph{Linearly Ordered Topological Space} ($\mathrm{LOTS}$) if its topology is $\tau_{<}$ for some linear order $<$. 
Similarly, a topological space is said to be \emph{circularly ordered topological space}  ($\mathrm{COTS}$) 
if its topology is $\tau_{R}$ for some circular order $R$. 


\begin{remark} \label{r:chech} \ \cite[page 35]{Cech}
	\ben 
	\item 
	Every linear order $<$ on $X$ defines a \emph{standard circular order} $R_{<}$ on $X$ as follows: 
	$[x,y,z]$ iff one of the following conditions is satisfied:
	$$x < y < z, \ y < z < x, \  z < x < y.$$	
	\item (cuts) Let $(X,R)$ be a c-ordered set and $z \in X$. 
	For every $z \in X$ the relation 
	$$z <_z a, \ \ \ \  a <_z b \Leftrightarrow [z,a,b] \ \ \ \forall a \neq b \neq z \neq a$$
	 is a linear order on $X$ and $z$ is the least element. This linear order 
	 restores the original circular order. Meaning that $R_{<_z}=R$. 
	\een
\end{remark}

On the set $\{0, 1, \cdots, n-1\}$ consider the standard c-order modulo $n$. 
Denote this c-ordered set, as well as its order, simply by $C_n$. 
Every finite c-ordered set with $n$ elements is isomorphic (Definition \ref{c-ordMaps}) to $C_n$. 

\begin{defin} \label{d:cycl}  Let $(X,R)$ be a c-ordered set. 
	We say that a vector
	$(x_1,x_2, \cdots, x_n) \in X^n$ is a \textit{cycle} in $X$ if it satisfies the following two conditions: 
	
	\ben 
	\item For every $[i,j,k]$ in $C_n$ and \textit{distinct} 
	$x_i, x_j, x_k$ we have $[x_i,x_j,x_k]$; 
	\item $x_i=x_k \ \Rightarrow$ \ 
	$(x_i=x_{i+1}= \cdots =x_{k-1}=x_k) \ \vee \ (x_k=x_{k+1}=\cdots =x_{i-1}=x_i).$  
	\een  
	\textit{Injective cycle} means that all $x_i$ are distinct. 
\end{defin}

\begin{defin} \label{c-ordMaps} Let $(X_1,R_1)$ and  $(X_2,R_2)$ be c-ordered sets. 
	A function $f: X_1 \to X_2$ is said to be {\it c-order preserving}, 
	or {\it COP}, if $f$ moves every cycle to a cycle. 
	Equivalently, if it satisfies the following two conditions:
	
	\ben 
	\item For every $[a,b,c]$ in $X$ and \textit{distinct} 
	$f(a), f(b), f(c)$ we have $[f(a), f(b), f(c)]$; 
	\item If $f(a)=f(c)$ then $f$ is constant on one of the closed intervals $[a,c], [c,a]$. 
	\een 
	\sk 
We let $M_+(X_1,X_2)$ be the collection of c-order preserving
maps from $X_1$ into $X_2$.

	$f$ is an \textit{isomorphism} if, in addition, $f$ is a bijection. 
Denote by $H_+(X)$ the group of all COP isomorphisms $X \to X$  (necessarily homeomorphisms). 
\end{defin}

A composition of c-order preserving maps is c-order preserving. 

\begin{examples} \label{ex:cop}  \
	\ben 
	\item 	A function $f: C_n=\{1,2, \cdots, n\}: \to X$ is COP if and only if the corresponding vector $(f(x_1),f(x_2), \cdots, f(x_n))$ is a cycle in $X$ (Definition \ref{d:cycl}). 	
\item Gluing all points of a given 
closed interval on a c-ordered set defines a COP map. 
\item In particular, gluing end-points $a,b$ of a \textit{gap interval} (i.e., $(a,b)$ or $(b,a)$ is empty) of any c-ordered set induces a COP map. 		
					\item For example, the projection $q: X(c) \to X$ (from Proposition \ref{cover}) 
					is COP (e.g., $[0,1] \to \T$). 
\item Every COP map $f: X \to C_{d+1}$ can be interpreted as a standard coloring function from Definition \ref{d:ColFun}. 			
	\een 
\end{examples}


%

\begin{defin} \label{d:COsystem} 
	We say that	a compact $S$-system $(X,\tau)$ is {\it circularly orderable} if there exists a compatible circular 
	 order $R$ on $X$ such that $X$ is $\mathrm{COTS}$ and 
	every $s$-translation $\tilde{s}: X \to X$ is COP. 
We denote by $\mathrm{CODS}$ the class of all c-orderable systems.  Similarly we have the class $\mathrm{LOTS}$ of all linearly ordered compact $S$-systems. 
\end{defin}



For every linearly (circularly) ordered compact space $X$ and every topological subgroup $G \subset H_+(X)$, with its compact open topology, the corresponding action $G \times X \to X$ defines a linearly (circularly) ordered $G$-system. 


\begin{prop} \label{inclusion} 
	$\mathrm{LODS} \subset \mathrm{CODS}$ and $\mathrm{LOTS} \subset \mathrm{COTS}$. 
	More precisely: 
	every compact linearly ordered space ($S$-system) is a circularly ordered space ($S$-system) \wrt the canonically associated circular order. 
\end{prop}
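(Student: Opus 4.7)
The plan is to verify that the canonical construction sending a linear order $<$ to the ternary relation $R_<$ of Remark~\ref{r:chech}(1) turns a compact LOTS into a COTS (with the two interval topologies agreeing) and turns each $<$-preserving translation into a COP map. The argument has three independent pieces.

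\emph{Step 1: $R_<$ is a c-order.} Define $[x,y,z]\in R_<$ iff one of $x<y<z$, $y<z<x$, $z<x<y$ holds. Cyclicity and totality are immediate from the symmetric form of the definition. Asymmetry follows because an increasing triple and the same triple with two entries swapped cannot both be cyclic shifts of an increasing chain. For transitivity, conditioning on the common vertex $a$ linearizes $R_<$ via the order $<_a$ of Remark~\ref{r:chech}(2), and the required implication reduces to the linear fact $b<_a c<_a d\Rightarrow b<_a d$.

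\emph{Step 2: the two interval topologies coincide.} Compute the $R_<$-intervals: for $a<b$, $(a,b)_{R_<}=(a,b)_<$, while for $b<a$ (the ``wrap-around'' case), $(a,b)_{R_<}=(\leftarrow,b)_<\cup(a,\to)_<$. Both are open in $\tau_<$, so $\tau_{R_<}\subseteq\tau_<$. For the reverse inclusion I use that a compact LOTS necessarily has a least element $m$ and a greatest element $M$. The identities
\[
(a,\to)_<=(a,m)_{R_<}\ \text{for }a>m,\qquad (\leftarrow,b)_<=(M,b)_{R_<}\ \text{for }b<M,
\]
express all non-extremal open rays of $\tau_<$ as basic $R_<$-intervals. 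The two extremal rays $(m,\to)_<=X\setminus\{m\}$ and $(\leftarrow,M)_<=X\setminus\{M\}$ are open in $\tau_{R_<}$ because $\tau_{R_<}$ is Hausdorff by Proposition~\ref{Hausdorff}(2). Hence $\tau_<\subseteq\tau_{R_<}$.

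\emph{Step 3: order-preserving $\Rightarrow$ COP.} Let $\tilde s\colon X\to X$ be $<$-preserving; I verify the two clauses of Definition~\ref{c-ordMaps}. Given $[a,b,c]\in R_<$, cyclic relabelling puts the triple in the form $a<b<c$. Monotonicity gives $\tilde s(a)\le\tilde s(b)\le\tilde s(c)$, so if the three images are distinct then $\tilde s(a)<\tilde s(b)<\tilde s(c)$, whence $[\tilde s(a),\tilde s(b),\tilde s(c)]\in R_<$. For clause (2), if $\tilde s(a)=\tilde s(c)$ then, with $a<c$ after relabelling, every $x\in[a,c]_<$ satisfies $\tilde s(a)\le\tilde s(x)\le\tilde s(c)=\tilde s(a)$; thus $\tilde s$ is constant on $[a,c]_<=[a,c]_{R_<}$.

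Combining Steps~1--2 yields $\mathrm{LOTS}\subset\mathrm{COTS}$, and adding Step~3 yields $\mathrm{LODS}\subset\mathrm{CODS}$. The main obstacle is Step~2: the $R_<$-intervals with $b<a$ that wrap around the ``join'' between $M$ and $m$ have no counterpart among ordinary $<$-intervals, while the two extremal rays of $\tau_<$ have no direct counterpart among basic $R_<$-intervals. It is precisely compactness of $X$ (providing the endpoints $m,M$) together with the Hausdorffness of $\tau_{R_<}$ that reconciles the two pictures.
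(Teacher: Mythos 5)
Your proof is correct, and its key step goes by a genuinely different route than the paper's. For the inclusion $\tau_{R_<}\subseteq\tau_<$ you and the paper argue identically (the same two-case computation of $(a,b)_{R_<}$ as $(a,b)_<$ or $(\leftarrow,b)_<\cup(a,\to)_<$), but for the reverse inclusion the paper uses a soft rigidity argument: since $\tau_{R_<}$ is Hausdorff (Proposition \ref{Hausdorff}) and $\tau_<$ is compact, a coarser Hausdorff topology under a compact one must coincide with it, so $\tau_<=\tau_{R_<}$ with no further computation. You instead prove $\tau_<\subseteq\tau_{R_<}$ directly, exhibiting each non-extremal ray as a single wrap-around $R_<$-interval via the endpoints $m,M$ (whose existence is where you invoke compactness, consistently with the paper's remark that compactness is essential --- cf.\ the example $X=[0,1)$), and disposing of the two extremal rays $X\setminus\{m\}$, $X\setminus\{M\}$ by $T_1$-ness of $\tau_{R_<}$. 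Your identities $(a,\to)_<=(a,m)_{R_<}$ for $a>m$ and $(\leftarrow,b)_<=(M,b)_{R_<}$ for $b<M$ check out. The paper's argument is shorter and transfers verbatim to any situation where one only knows abstractly that the circular topology is coarser; yours is more informative, as it identifies exactly which basic $R_<$-intervals realize the rays and makes the role of the endpoints explicit. You also supply details the paper outsources: the verification that $R_<$ is a circular order (delegated in Remark \ref{r:chech} to \v{C}ech's book, with your reduction of transitivity to the linearization $<_a$ being a clean way to do it) and the two clauses of Definition \ref{c-ordMaps} for order-preserving translations, including the needed observation that $[a,c]_<=[a,c]_{R_<}$ when $a<c$, which the paper asserts in one line. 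No gaps.
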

\begin{proof}
	Let $<$ be a linear order on $X$ such that the interval topology $\tau_{<}$ is compact. 
	Consider the canonical circular order (Remark \ref{r:chech}) $R:=R_{<}$ on $X$ and the corresponding topology $\tau_R$. Then 
	$$
	\tau_R \subseteq \tau_{<}.
	$$
	Indeed, it is enough to show that 
	$$
	(a,b)_R:=\{x \in X: [a,x,b]\}  \in \tau_{<}
	$$
for every distinct $a,b \in X$. 
We have two cases:

\begin{enumerate}
	\item $a<b$. Then $(a,b)_R=(a,b)_{<}.$ 
	\item $b<a$. Then $(a,b)_R=	(\leftarrow,b) \cup (a,\to)$. 
\end{enumerate}

In each case $(a,b)_R \in \tau_{<}$. 
    
	On the other hand, $\tau_R$ is Hausdorff by Proposition \ref{Hausdorff}. Since $\tau_{<}$ is compact we get 
	$$
	\tau_{<} = \tau_R.
	$$ 
	Finally, note that if an $s$-translation $X \to X$ is linear order preserving then it is also c-order preserving. 
\end{proof}

The compactness of $\tau_{<}$ is essential. Indeed, the linearly ordered set $X=[0,1)$ with respect to the c-order topology is in fact the circle. This gives a justification of the standard identification of the \emph{sets} $\T$ and (c-ordered) $[0,1)$. 

The circle $\T$ is clearly a factor space of a (linearly ordered) closed interval $[a,b]$ after identifying the endpoints. 
The following result shows that 
we have a similar situation for any c-ordered compact space. 
 
\begin{prop} \label{cover} 
	Let $(X,R)$ be a circularly ordered space. Then for every $c \in X$ there exists a linearly ordered space $X(c):=([c^-,c^+],<)$ such that $X$ is homeomorphic to the factor-space of
$X(c)$ identifying the endpoints of $X(c)$. Moreover, 
the corresponding quotient map $q: X(c) \to X$ (sometimes denoted by $q_c$) is closed and c-order preserving. When $X$ is compact then so is $X(c)$. 
	 \end{prop}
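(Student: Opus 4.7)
The plan is to construct $X(c)$ by ``cutting'' $X$ at $c$, using the cut linear order of Remark \ref{r:chech}(2). Define $<_c$ on $X$ by declaring $c$ to be the least element and, for $a,b\neq c$, $a<_c b \iff [c,a,b]$. Take $X(c):=X\sqcup\{c^+\}$ as a set, identify the original $c\in X$ with $c^-$, and extend $<_c$ to $X(c)$ by putting $c^+$ as the greatest element. Equip $X(c)$ with the interval topology $\tau_{<_c}$ and define $q\colon X(c)\to X$ by $q(c^-)=q(c^+)=c$ and $q(x)=x$ for every other $x$. It is clear that $q$ identifies precisely $c^-$ with $c^+$, so the set map factors through the quotient identifying the endpoints of $[c^-,c^+]$, giving a natural bijection $\bar q$ from the factor space to $X$.

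The first substantive step is the continuity of $q$. I would take a basic circular interval $(u,v)_R$ in $X$ and compute its preimage. Using Remark \ref{r:chech}(2), $[u,v,c]$ is equivalent to $u<_c v$ (and $[u,c,v]$ to $v<_c u$). A short case analysis then shows: if $c\notin(u,v)_R\cup\{u,v\}$, then $(u,v)_R$ coincides with the linear interval $(u,v)_{<_c}\subset X(c)$; if $c\in(u,v)_R$, then $(u,v)_R=[c,v)_{<_c}\cup(u,c]_{<_c}$, so that $q^{-1}((u,v)_R)=[c^-,v)_{<_c}\cup(u,c^+]_{<_c}$, a union of two basic rays; the degenerate cases $u=c$ or $v=c$ are analogous. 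Thus $q$ is continuous. The same computation, read in reverse, shows that $q$ is a quotient map: a subset $U\subseteq X$ is open iff $q^{-1}(U)$ is open. Hence $\bar q$ is a homeomorphism.

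That $q$ is c-order preserving follows from the two conditions of Definition \ref{c-ordMaps}: distinct images $q(a),q(b),q(c')$ exclude the case where two of $a,b,c'$ are the endpoints $\{c^-,c^+\}$, and in the remaining cases one reduces to the compatibility $R=R_{<_c}$ between the standard circular order of $<_c$ and $R$. The only nontrivial collapse is $q(c^-)=q(c^+)$, and $q$ is constant (indeed equal to $c$) on the degenerate ``closed interval'' $[c^+,c^-]$, which is empty in the open part; this matches condition (ii) of the definition. Closedness of $q$ will follow from compactness of $X(c)$ and the fact that $X$ is Hausdorff (by Proposition \ref{Hausdorff}).

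The main obstacle is compactness of $X(c)$ when $X$ is compact. My plan is to apply Alexander's subbase theorem to the subbase of rays $\{[c^-,b):b\in X(c)\}\cup\{(a,c^+]:a\in X(c)\}$. Given a cover by such rays, let $B$ be the set of right endpoints $b_i$ appearing in left-rays and $A$ the set of left endpoints $a_j$ appearing in right-rays. Points of $X(c)$ not lying in any single ray of the cover must be ``trapped'' between $A$ and $B$; translating back to $X$ via $q$, these points form a closed $R$-arc, and compactness of $X$ produces a finite subcover of this arc by $\tau_R$-basic sets, each of which pulls back under $q$ to a finite union of our subbasic rays, as shown in the continuity computation. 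Combined with the rays already covering $c^-$ and $c^+$, this gives the required finite subcover. Once compactness is in hand, closedness of $q$ and the homeomorphism $\bar q$ follow automatically, completing the proof.
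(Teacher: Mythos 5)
Your proposal follows the paper's proof in all essentials: the same cut construction (the order $<_c$ of Remark \ref{r:chech}(2) with $c^-$ minimal and a new maximum $c^+$, $q(c^\pm)=c$), and your continuity computation is exactly the paper's two-case argument read as preimages rather than pointwise continuity: $(u,v)_R=(u,v)_{<_c}$ when $c\notin (u,v)_R$, and $q^{-1}((u,v)_R)=[c^-,v)\cup(u,c^+]$ when $[u,c,v]$. Your COP verification and the derivation of closedness from compactness of $X(c)$ plus Hausdorffness of $X$ (Proposition \ref{Hausdorff}) also match; in fact your direct verification that $q$ is a quotient map is slightly more self-contained than the paper, which asserts closedness of $q$ in general and omits the details. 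The only structural difference is compactness: the paper observes that $X(c)$ is topologically the split space $X_A$ of Lemma \ref{c-doubling} with $A=\{c\}$ and invokes item (2) of that lemma, whereas you argue directly; but since the lemma itself is proved via Alexander's subbase theorem, you are applying the same tool one level down, not taking a genuinely different route.

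One step of your compactness sketch needs repair as written. Alexander's subbase theorem requires the finite subcover to be extracted from the \emph{given} cover $\mathcal{C}$ of subbasic rays; covering the trapped arc by arbitrary $\tau_R$-basic sets and pulling back does not suffice, because the pullback $q^{-1}((u,v)_R)=[c^-,v)\cup(u,c^+]$ is a union of rays that need not belong to $\mathcal{C}$. (Also, since $\mathcal{C}$ covers $X(c)$, every point lies in some ray, so the phrase ``points not lying in any single ray'' needs restating.) The fix stays within your plan: pick $[c^-,b_0),\,(a_0,c^+]\in\mathcal{C}$ covering $c^-$ and $c^+$; if these two rays cover $X(c)$ you are done, and otherwise $K:=[b_0,a_0]$ satisfies $q(K)=X\setminus(a_0,b_0)_R$, which is closed, hence compact, in $X$ by your case-2 computation. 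For each $x\in K$ choose a ray of $\mathcal{C}$ containing it; if it is $[c^-,b)$ note that $(c,b)_R$ is a $\tau_R$-neighborhood of $q(x)$ with $q^{-1}((c,b)_R)=(c^-,b)\subset[c^-,b)$, and if it is $(a,c^+]$ use $(a,c)_R$ with $q^{-1}((a,c)_R)=(a,c^+)\subset(a,c^+]$. Finitely many such neighborhoods cover $q(K)$, and the \emph{corresponding rays of} $\mathcal{C}$, together with $[c^-,b_0)$ and $(a_0,c^+]$, form the required finite subcover. With this adjustment your proposal is complete.
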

	 \begin{proof}
	 	Take a point $c \in X$ and consider the cut at $c$ where $c$ becomes the minimal element.  
	 	 Denote it by $c^{-}$. 
	 	Then we get a linearly ordered set $X$ by declaring $x < y$ whenever $(c,x,y) \in R$ 
	 	(see Remark \ref{r:chech}). 
	 	Adding to $X$ a new point $c^{+}$ as the greatest element we get a linearly ordered set $X(c)=[c^-,c^+]=X\cup \{c^+\}$ and a natural onto map $$q: X(c) \to X, \ q(c^{-})=q(c^{+})=c, \ q(x)=x \ \forall x \in (c^{-}, c^{+}).$$ 
	 	It is easy to see that $q$ is a closed continuous map (hence, a quotient map) \wrt the interval topologies. Moreover, $q$ is c-order preserving.  
	 	\sk 
	 	\textit{Continuity}: We have to show that $q$ is continuous at every point $z \in X(c)$. Let $U=(a,b)_R$ be an open basic neighborhood of $q(z) \in X$, where $a \neq b$.  There are two cases: 
	 	
	 	1) $q(z)\neq c$. 
	 	
	 	\nt Then $q(z)=z$. We can suppose that $c \notin U$ ($\tau_R$ is Hausdorff). In this case 
	 	we have necessarily $[c,a,z], [c,z,b]$. By the transitivity, $[c,a,b]$. This means that $a<_c b$. 
	 	Take $V:=(a,b)_{<_c}$, an open interval of $z$ in LOTS $X(c)$. Then $q(V) \subset U$. Indeed, $a <_c x <_c b$ implies that $[c,a,x], [c,x,b]$. Then 
	 	$[a,x,b]$ by Lemma \ref{l:property}. 
	 	This means that $q(x)=x \in (a,b)_R$.   
	 	
	 	2) $q(z)=c$. 
	 	
	 	\nt Then $z=c^-$ or $z=c^+$. We have $[a,c,b]$. Equivalently, $[c,b,a]$. So, $b <_c a$. Take 
	 	$$V:=(\leftarrow,b) \cup (a,\to).$$ Then $V$ is a neighborhood of $z$ in the LOTS $X(c)$ and $q(V) \subset U:=(a,b)_R$. Indeed:
	 	
	 	a) If $x <_c b$ then we have  $[c,x,b]$. Equivalently, $[b,c,x]$. 
	 	Since $[a,c,b]=[b,a,c]$ the transitivity axiom implies $[b,a,x]$. So we get $[a,x,b]$. 
	 	
	 	b) If $a <_c x$. Then $[c,a,x]$. Since $[c,b,a]$, by Lemma \ref{l:property} we get $[b,a,x]$. 
	 	 Which is equivalent to $[a,x,b]$. 
	 
%
%
%
%
%
%
%
%
%

	 \sk
	 
	 If $X$ is compact then 
	  $X(c):=[c^-,c^+]$ is compact, as a particular case of $X_A$ from Lemma \ref{c-doubling}.2 with the singleton $A:=\{c\}$. In this case the closedness of $q$ is clear because $X$ is Hausdorff (Proposition \ref{Hausdorff}). 
In fact, $q$ is closed in general, even when $X$ is not compact. We omit the details.  	
	 	\end{proof}

Every c-ordered space is a normal space. Indeed, recall that every $\mathrm{LOTS}$ is normal and the normality is preserved by closed maps onto Hausdorff spaces. Now Proposition \ref{cover}  finishes the proof. 


 \subsection{Splitting points and a construction of c-ordered spaces}
 
 We describe a general method for producing c-ordered $G$-systems. 


\begin{lem} \label{c-doubling}
	Let $R$ be a circular order on a set $X$ and $A \subset X$.
	\begin{enumerate}
		\item  There exist a canonically defined circularly ordered set $X_A = \Sp(X; A)$ and a continuous c-order preserving onto map $\nu: X_A \to X$ such that the preimage $\nu^{-1}(a)$ of any $a \in A$ consists of exactly two points and $\nu^{-1}(x)$ is a singleton for every $x  \in X \setminus A$. 
		
		\item If $X$ is compact then $X_A$ is also compact. 
		If, in addition, $A$ is countable and $X$ is metrizable then $X_A$ is metrizable. 
		\item The c-order on $X_A$ is uniquely defined. Let $\ga: M \to X$ be a c-ordered preserving map and $A \subset X$ such that the preimage $\ga^{-1}(a)$ of any $a \in A$ consists of exactly two points and $\ga^{-1}(x)$ is a singleton for every $x  \in X \setminus A$. Then $M$, as a c-ordered set, is canonically isomorphic to $X_A$. 	
		\item If $X$ is a c-ordered $G$-space with discrete group $G$ and $A$ is a $G$-invariant subset of $X$ then:
		\ben 
		\item 
		 the original action of $G$ on $X$ induces a natural continuous action on $X_A$ such that 
  $\nu: X_A \to X$ is a $G$-map.
  \item
  An inclusion of $G$-invariant subsets $A_1 \subset A_2$ 
  of $X$ induces a natural continuous onto $G$-map $\eta: X_{A_2} \to X_{A_1}$ such that $\nu_1 \circ \eta = \nu_2$.
			\item 
			Assume, in addition, that every point of $A$ is a limit point of $X$ from both sides. Then 
			$X_A$ is a minimal dynamical $G$-system iff $X$ is a minimal $G$-system. 	 
			\een 	
	\end{enumerate}

\end{lem}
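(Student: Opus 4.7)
The plan is to construct $X_A$ by unfolding $X$ to a linearly ordered space and then performing the classical ``double arrow'' split. Pick $c \in X \setminus A$ (the degenerate case $A = X$ is a minor technicality handled by first adjoining an auxiliary non-$A$ point or using a different cut procedure). By Proposition \ref{cover}, $X(c) = [c^-, c^+]$ is a compact LOTS with a c-order preserving projection $q: X(c) \to X$. In $X(c)$, replace each $a \in A$ by two points $a^-, a^+$ with $a^- < a^+$ and nothing in between; this yields a compact LOTS $Y$, compactness being a standard property of the split/double-arrow construction. Identifying the endpoints $c^-$ and $c^+$ of $Y$ produces $X_A$, with $\nu: X_A \to X$ defined as the composition of the split-projection with $q$. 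The c-order axioms for $X_A$ follow because it inherits a cyclic order from the linear order on $Y$ via Remark \ref{r:chech}. Compactness of $X_A$ is inherited from $Y$; for metrizability when $A$ is countable, observe that $X$ metric implies $X(c)$ is second countable, splitting countably many points adds only countably many new basic intervals, so $X_A$ is second countable compact Hausdorff, hence metrizable by Urysohn.

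For uniqueness (item 3), given $\gamma: M \to X$ with the same fiber property, fix $a \in A$ and write $\gamma^{-1}(a) = \{m_1, m_2\}$. Since $\gamma$ is COP, condition (ii) of Definition \ref{c-ordMaps} forces $\gamma$ to be constant on one of the closed intervals $[m_1, m_2]_{R_M}$ or $[m_2, m_1]_{R_M}$; because this fiber has exactly two elements, the corresponding open interval is empty. This canonically labels $\{m_1, m_2\}$ as $\{a^-, a^+\}$ and produces a unique c-order isomorphism $M \cong X_A$ compatible with the projections. This uniqueness makes items 4(a) and 4(b) essentially automatic: define $g \cdot (a, \epsilon) = (ga, \epsilon)$ for $a \in A$ and $g \cdot x = gx$ otherwise (well-defined by $G$-invariance of $A$); the resulting bijection $X_A \to X_A$ is c-order preserving, hence a homeomorphism of interval topologies, and since $G$ is discrete the $G$-action is continuous. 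For $A_1 \subset A_2$, define $\eta: X_{A_2} \to X_{A_1}$ as the identity on $X \setminus A_2$ and on split pairs over $A_1$, and as the collapse $\{a^-, a^+\} \mapsto a$ over $A_2 \setminus A_1$; then $\eta$ is COP, $G$-equivariant, and $\nu_1 \circ \eta = \nu_2$ by construction.

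For the minimality criterion (item 4(c)), the implication $X_A$ minimal $\Rightarrow$ $X$ minimal is immediate since $X = \nu(X_A)$ is a $G$-factor of $X_A$. For the converse, assume $X$ is minimal and every $a \in A$ is a two-sided limit point. Let $p \in X_A$ and $Y := \overline{Gp}$; then $\nu(Y)$ is closed $G$-invariant in $X$ and contains $\nu(p)$, hence $\nu(Y) = X$ by minimality. Suppose, for contradiction, that $Y \neq X_A$, so some split point, say $(a, +)$, lies outside $Y$; since $\nu(Y) = X$, the only other preimage $(a, -)$ must be in $Y$. The two-sided limit hypothesis at $a$ supplies a net $y_n \to a$ in $X$ lying on the ``$(a, +)$-side'' of $a$, meaning $y_n$ eventually enters every arc of the form $(a, \nu(b))_R$ with $b \neq (a, \pm)$. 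Lift each $y_n$ to a $\tilde{y}_n \in Y \cap \nu^{-1}(y_n)$. A case analysis on basic neighborhoods $U = ((a, -), b)_{R_A}$ of $(a, +)$ in $X_A$ shows that such $\tilde{y}_n$ must eventually lie in $U$: in all sub-cases of whether $\nu(\tilde{y}_n) \in A$ or not, the defining triple relation $[(a, -), \tilde{y}_n, b]_{R_A}$ reduces to $[a, y_n, \nu(b)]_R$, which holds eventually by the choice of the net. Hence $\tilde{y}_n \to (a, +)$ and $(a, +) \in Y$, contradiction. The main obstacle is precisely this ``sidedness'' verification; the two-sided limit hypothesis is exactly what is needed to force convergence to $(a, +)$ rather than to $(a, -)$.
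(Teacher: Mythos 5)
Your proposal is correct, but it takes a genuinely different route from the paper's. The paper builds $X_A$ directly: as a set it is $\{a^+,a^-: a\in A\}\cup(X\setminus A)$, the circular order $R_A$ is defined by two explicit rules (pull back triples with distinct $\nu$-images from $X$; decree $[a^-,a^+,u]$ for all other $u$), compactness in item (2) is obtained by applying Alexander's subbase theorem to the standard interval base, and uniqueness in item (3) is proved by a direct transitivity/asymmetry chase (assuming $[a_1,a_2,u]$ and $[a_2,a_1,v]$ one derives $[\ga(v),a,\ga(u)]$ and $[a,\ga(v),\ga(u)]$ simultaneously, contradicting asymmetry). You instead reduce to the linear case via the cut $X(c)$ of Proposition \ref{cover}, perform the classical double-arrow split there, and re-glue; this imports compactness and metrizability from standard LOTS facts (completeness of the split order; second countability plus Urysohn) instead of a subbase argument, and your uniqueness argument via condition (ii) of Definition \ref{c-ordMaps} --- constancy of $\ga$ on a closed interval forces the open interval between the two fiber points to be empty, which by totality pins down the labels $a^{\pm}$ --- is arguably cleaner than the paper's contradiction. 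You also supply an actual proof of item 4(c), which the paper dismisses as ``straightforward'': your net argument, resting on the fact that basic neighborhoods of $a^+$ in $X_A$ have the form $\{a^+\}\cup\nu^{-1}\bigl((a,v)_R\bigr)$, is sound, and you correctly isolate where the two-sided limit hypothesis enters (without it $a^+$ would be isolated and the lifted net could only reach $a^-$). Two loose ends, neither fatal: the degenerate case $A=X$ is only waved at --- the clean fix is to cut at a point $c\in A$ and refrain from gluing, letting the two endpoints of $X(c)$ serve as the split pair for $c$, since the circularization of the resulting LOTS via Remark \ref{r:chech} then has an empty gap interval between them, exactly as the paper's second rule requires; and in the non-compact setting the agreement of the quotient topology on the glued space with the interval topology $\tau_{R_A}$ deserves a word, though this is at the same level of detail as the paper's own ``the verification of the claims is straightforward.''
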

\begin{proof}  (1) 
	$X_A$, as a set, is $\{a^{+}, a^{-}: a \in A \} \cup (X \setminus A)$. 
	$$\nu: X_A \to X, \  \  \ a^{\pm} \mapsto a, \ x \mapsto x \ \ \forall \ a \in A \ \forall x \in X \setminus A \ .$$ 
	Define a natural circular order on $X_A$ by the following two rules: 
	
	$\bullet$
	$[a,b,c]$ for every $(a,b,c) \in X_A^3$ where $[\nu(a),\nu(b),\nu(c)]$ in $X$. 
	
		$\bullet$ \ \ 
	$[a^{-}, a^{+}, u]$ \ \ 
	$[a^{+}, u, a^{-}]$ \ \ 
	$[u,a^{-}, a^{+}]$ \ \  for every $a \in A, u \notin \{a^+,a^-\}$. 
	\sk 
	The verification of the following claims are straightforward. 
	\sk
\nt 	Claim 1: $R_A$ is a c-order  
	on the set $X_A$.
	
	\sk
	
\nt 	Claim 2:  (The topology of $R_A$)   A (standard) base for the circular topology $\tau(R_A)$ on $X_A$ of $R_A$ 
at a point of the form $x \in X \setminus A$, 
	is the collection of sets 
	$$\nu^{-1}(u,v),\ \ \ x \in (u,v), \ u,v \in X.$$ 
	For $s^{-} \in X_A$ a
	basis will be the collection of sets of the form 
	$$(u,s^+)=\{s^{-}\} \cup \nu^{-1}(u,s).$$ 
	For
	$s^{+} \in X_A$ a basis will be the collection of sets of the form
	$$(s^-,v)=\{s^{+}\} \cup \nu^{-1}(s,v).$$

		\sk
	\nt 	Claim 3: $\nu: X_A \to X$ is a continuous c-order preserving map. 
		\sk
	
	(2) If $X$ is compact then one may apply Alexander's subbase theorem to show that $X_A$ is compact. 
Here we use the standard base of $\tau(R_A)$ constructed in Claim 2. 
	
	\sk
	
%
%
%
%
%
%
%
	
	(3) Let $\ga^{-1}(a)=\{a_1, a_2\}$ and $[a_1,a_2,u]$ for some $u \in M$. Then we claim that necessarily $[a_1,a_2,v]$ for every $v \notin \{a_1, a_2\}$ (this will imply that the c-order on $M$ is uniquely defined). If not then $[a_2,a_1,v]$ for some $v \notin \{a_1, a_2\}$. Since $\ga$ is COP we can suppose that $\ga(v) \neq \ga(u)$. 
	Using the cyclicity, $[a_2,a_1,v]=[a_1,v,a_2]$.  Together with $[a_1,a_2,u]$ we get by Lemma \ref{l:property} that $[v,a_2,u]$. On the other hand by the Transitivity axiom (for $[a_1,v,a_2]$ and $[a_1,a_2,u]$) we have $[a_1,v,u]$. Since $\ga: M \to X$ is COP (and $\ga(v),a=\ga(a_1)=\ga(a_2),\ga(u)$ are distinct) we obtain $[\ga(v),a,\ga(u)]$ and $[a,\ga(v),\ga(u)]$. 
	However, this is impossible by the Asymmetry axiom.

	(4) 
	(a) The induced action is given by
	$$G \times X_A \to X_A, \ g (s^+)=(gs)^+, 
	g (s^-)=(gs)^-, g(x)=gx \ \forall s \in A, \ \forall x \notin A.$$

	(b) and (c) are straightforward.  
\end{proof}

\begin{remarks} \label{r:add} \ 
	\ben 
	\item The $G$-space $X_A$ from Lemma \ref{c-doubling} item (4) sometimes will be denoted by $X_A = \Sp(X,G; A)$. 
	In the particular case of the cyclic group $G:=\lan R_{\al} \ran$ generated by an irrational angle $\al$ we use the notation $\Sp(\T,R_\al; A)$. 

	\item Some examples: \label{Sturm1} \ 
	\ben 
	\item 
	Note that if the subset $A \subset X$ is empty then $X_A=X$. 
	\item For the c-ordered circle $X:=\T$ with $A=\T$  we get the ``double circle" of Ellis, \cite{Ellis}, which we denote by $\T_{\T}$. 
	\item
	Another important prototype of Lemma \ref{c-doubling} is \cite[Example 14.10]{GM1}. It gives a concrete realization of the Sturmian subshift, Definition \ref{d:StCode} (with $t = 1 - \al$). 
	In this case the corresponding cascade is $\T_A$ with $A:=\{m\al, n(1-\al) : m, n \in \Z\}$. 
	\een 
	
	\item \label{r:prod} 
	For every c-ordered set $K$ and a linearly ordered set $L$ one may define the so-called 
	\emph{c-ordered lexicographic product} $\T \times L$. See for example \cite{CJ} and also Figure 1 below.  
	The 2-circle $\T_{\T}$ as the c-ordered set is the \emph{c-ordered lexicographic product} $\T \times \{-,+\}$. 	
	 Every splitting space $X_A$ in Lemma \ref{c-doubling} is a c-ordered subset 
	of $\T \times \{-,+\}$. 
	
	\begin{figure}[h]
		\begin{center} \label{F1}
			\scalebox{0.3}{\includegraphics{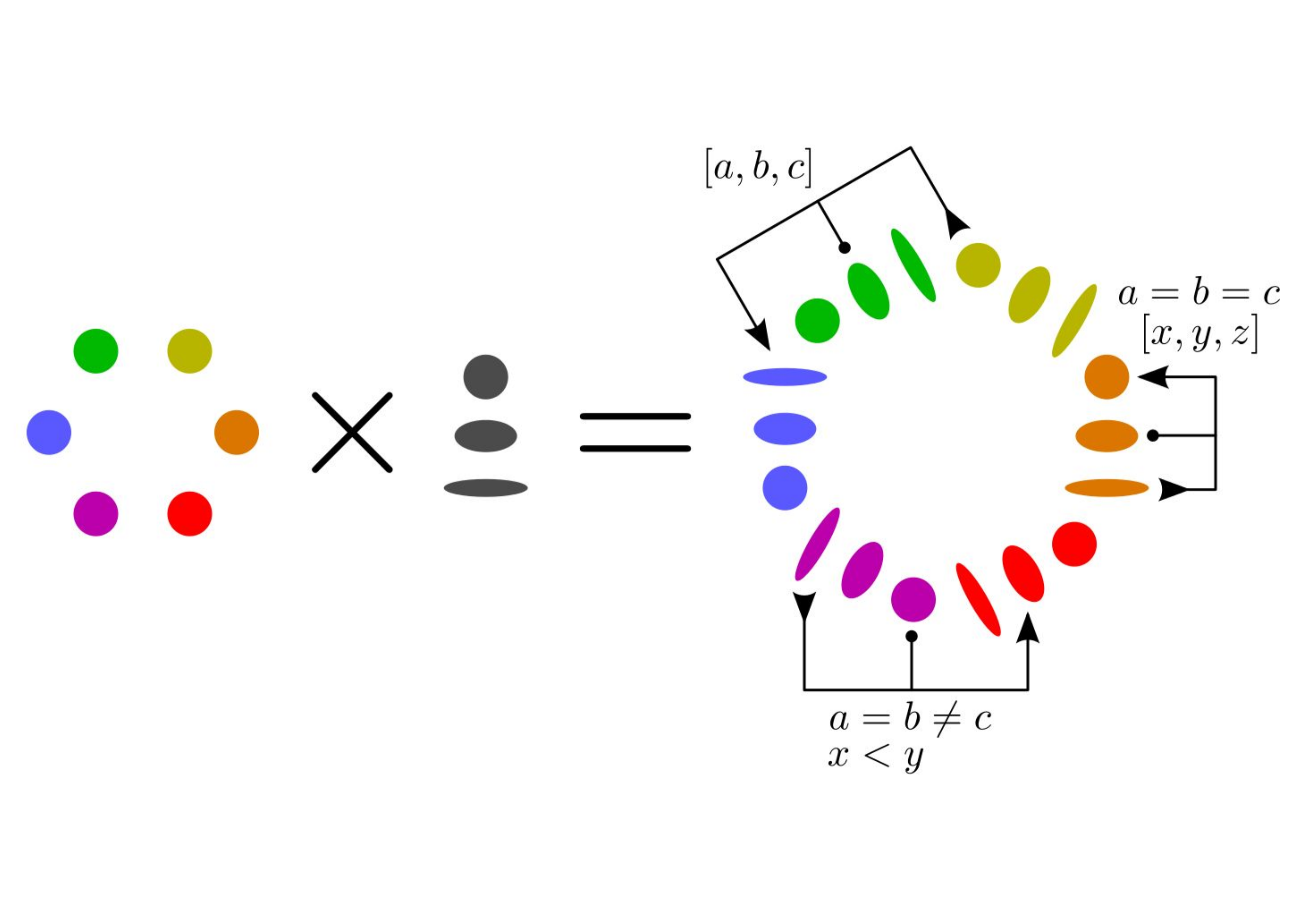}}
			\caption{c-ordered lexicographic product (from Wikipedia)}
		\end{center}
	\end{figure}

	\item The splitting points construction has its roots in linear orders. 
For every linearly ordered space $X$ and every subset $A \subset X$ one may define a new linearly ordered space $X_A$ and a continuous order preserving onto map $X_A \to X$. In particular, we can take $A=X$. Then  one gets a generalization of the double arrow space. 
See, for example, \cite{AH}. 
For $X=A=[0,1]$ the corresponding doubling procedure gives $[0,1] \times \{0,1\}$ with the lexicographic linear order. Removing two isolated points $0^-$ and $1^+$ we get 
the classical double arrow space which is $X_A$ with $X=[0,1]$ and $A=(0,1)$. 
Note that the ``double circle" $\T_{\T}$ 
is homeomorphic to the double arrow. This space $\T_{\T}$,  topologically, is embedded into the enveloping semigroup $E(X)$ of a Sturmian cascade generated by an irrational rotation $T: X \to X$ as in Definition \ref{d:StCode}. In fact, $\T_{\T}=E(X) \setminus \{\s^n: n \in \Z\}$. Moreover, 
in this case $E(X)$ is c-ordered and the embedding $\T_{\T} \hookrightarrow E(X)$ is c-order preserving. 
See Section \ref{s:E}.   
\een
\end{remarks}

\br

\subsection{C-ordered systems are null, hence also tame}

We begin by recalling the definition
of topological sequence entropy for a general dynamical system $(G, X)$.
Let $A=\{a_0, a_1, \dots\}$ be a sequence of of elements of $G$.
Given an open cover $\mathcal{U}$, let $N(\mathcal{U})$ be the minimal cardinality
of a subcover of $\mathcal{U}$. Define
$$
h^A_{top}(X,\mathcal{U})=\limsup_{n\to\infty} \frac{1}{n}\log N\left(\bigvee
_{i=0}^{n-1} {a_i}^{-1}(\mathcal{U})\right)
$$
The {\it topological entropy along the sequence $A$} is then defined by
$$
h^A_{top}(G, X)= \sup
\{h^A_{top}(X,\mathcal{U}) :  \mathcal{U}\ \text{an open cover of $X$}\}.
$$
A dynamical system $(G,X)$ is called {\em null}
if $h^A_{top}(G, X) =0$ for every sequence $A \subset G$.


By results of Kerr and Li \cite{KL05,KL} every null system is tame. 

\begin{thm}\label{null}
	A c-ordered metrizable cascade $(G,X)$ is null, hence also tame. 
\end{thm}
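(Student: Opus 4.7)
I would derive the ``null'' conclusion directly from the definition of topological sequence entropy by an atom-counting argument, and then invoke the Kerr--Li theorem, already cited above, to conclude tameness. The guiding principle is that on any COTS, finitely many points genuinely partition the space into linearly many arcs, and the COP property of $\sigma$ propagates this linear bound through arbitrarily many time iterates.

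Fix a sequence $A=\{a_0,a_1,\dots\}\subset\Z$ and an open cover $\mathcal U$ of $X$. By Proposition~\ref{Hausdorff} the arcs $(a,b)_R$ form a base for the topology of $X$, so by compactness $\mathcal U$ is refined by a finite cover $\mathcal V=\{V_1,\dots,V_k\}$ consisting of open arcs. Since sequence entropy is monotone under refinement (subcovers of $\bigvee_i a_i^{-1}\mathcal V$ yield subcovers of $\bigvee_i a_i^{-1}\mathcal U$ of no larger cardinality), it suffices to show $h^A_{\rm top}(X,\mathcal V)=0$.

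Each iterate $\sigma^{a_i}$ is a COP homeomorphism, so it sends arcs to arcs: $(\sigma^{a_i})^{-1}(u,v)_R=(\sigma^{-a_i}u,\sigma^{-a_i}v)_R$. Hence every pull-back cover $a_i^{-1}\mathcal V$ again consists of $k$ open arcs, and the family $\bigcup_{i<n}a_i^{-1}\mathcal V$ uses at most $2nk$ endpoints in total. Transferring the count to the compact LOTS $X(c)$ via the quotient $q:X(c)\to X$ of Proposition~\ref{cover}, a set of $m$ points partitions $X$ into at most $m+1$ maximal open arcs (``atoms''), so we have at most $2nk+1$ atoms. Each atom $A$ is connected and misses every endpoint, so for every $i<n$ it is contained in a unique $a_i^{-1}V_{j_i(A)}\in a_i^{-1}\mathcal V$, whence $A\subset W_A:=\bigcap_{i<n}a_i^{-1}V_{j_i(A)}\in\bigvee_{i<n}a_i^{-1}\mathcal V$. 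Picking one such $W_A$ per atom and, if necessary, one additional element of the join per endpoint (each is covered by some member of $\bigvee_i a_i^{-1}\mathcal V$) yields a subcover of cardinality at most $4nk+1$. Therefore
\[
h^A_{\rm top}(X,\mathcal V)\ \le\ \limsup_{n\to\infty}\frac{\log(4nk+1)}{n}\ =\ 0.
\]
Since $\mathcal U$ and $A$ were arbitrary, $h^A_{\rm top}(\Z,X)=0$ for every sequence, i.e.\ $(\Z,X)$ is null; the Kerr--Li result then gives tameness.

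The only delicate step is the atom count on a general COTS, which I would handle by explicitly reducing via Proposition~\ref{cover} to the LOTS $X(c)$ where ``$m$ points determine at most $m+1$ intervals'' is elementary; pushing forward through $q$ costs at most a bounded additive correction and preserves the linear-in-$n$ bound. The remainder of the argument is routine bookkeeping, and notably neither metrizability nor the group being $\Z$ is essential for the entropy estimate itself.
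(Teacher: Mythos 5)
Your argument is, at its core, the same as the paper's: both reduce to finite covers by open arcs, and both exploit the fact that $n$ pulled-back arc covers have only linearly many endpoints, hence the join admits a subcover of size linear in $n$, forcing zero sequence entropy along every sequence; the Kerr--Li implication then gives tameness. The paper packages the count recursively ($N_{n+1}\le N_n+2k$, so $N_n\le 2kn$), while you do a one-shot atom count yielding $N_n\le 4nk+1$; this difference is cosmetic, and your version is if anything more explicit about why the linear bound holds.

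One justification, however, is wrong as stated and needs repair: \emph{``each atom $A$ is connected''} fails for a general COTS. Indeed, the systems this theorem is chiefly aimed at --- Sturmian-like subshifts, realized as split spaces $\T_A$ in Lemma~\ref{c-doubling} --- are totally disconnected, so no nondegenerate atom is topologically connected there. What you actually need is order-convexity, not connectedness: if an atom $B$ (an open arc between two consecutive endpoints, hence missing all endpoints) meets $W=(u,v)_R\in a_i^{-1}\mathcal V$, then $B\subset W$; for otherwise $B$ would contain some $x\in (u,v)_R$ and some $y\in [v,u]_R$, and an arc containing both $x$ and $y$ must contain one of the arcs $(x,y)_R$ or $(y,x)_R$, hence must contain $v$ or $u$ --- an endpoint, a contradiction. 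Alternatively, your own reduction to the LOTS $X(c)$ of Proposition~\ref{cover} gives the same containment by elementary linear-order bookkeeping (an order interval meeting another interval while avoiding its endpoints lies inside it), again with no appeal to connectedness. With this substitution your proof is complete and correct, and your closing observation --- that neither metrizability nor $G=\Z$ is used in the entropy estimate --- is consistent with the paper, which defines nullness and invokes the Kerr--Li implication for general group actions.
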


\begin{proof}
	It is easy to see that with no loss of generality we can work with open covers
	$\mathcal{U}$ consisting of a finite collection of open intervals.
	Let $\mathcal{U}$ be such a cover of cardinality $k$. 
	Now clearly, given $A=\{a_0<a_1<\ldots\}$, and denoting $N_n =
	N(\bigvee_{i=0}^{n-1}{a_i}^{-1}(\mathcal{U}))$, we have
	$$
	N_{n+1} =   N\left(\bigvee_{i=0}^{n}{a_i}^{-1}(\mathcal{U})\right)
	= N \left({a_i}^{-1} \mathcal{U} \vee \bigvee_{i=0}^{n-1}{a_i}^{-1}(\mathcal{U})\right)
	\le N_n + 2k.
	$$
	Therefore, $N_n \le 2kn$, whence
	$$
	h^A_{top}(X,\mathcal{U})=\limsup_{n\to\infty} \frac{1}{n} \log N_n =0.
	$$
\end{proof}



\sk

\section{Tame families of functions}

	\sk 
	
\subsection{Representations on Rosenthal spaces} 

\begin{defin}\label{d:l1}
	Let $f_n: X \to \R$ be a uniformly bounded sequence of functions on a \emph{set} $X$. Following Rosenthal we say that
	this sequence is an \emph{$l_1$-sequence} on $X$ if there exists a real constant $a >0$
	such that for all $n \in \N$ and choices of real scalars $c_1, \dots, c_n$ we have
	$$
	a \cdot \sum_{i=1}^n |c_i| \leq ||\sum_{i=1}^n c_i f_i||.
	$$
\end{defin}


For every $l_1$-sequence $f_n$ its closed linear span in $l_{\infty}(X)$
is linearly homeomorphic to the Banach space $l_1$.
In fact, the map $$l_1 \to l_{\infty}(X), \ \ (c_n) \to \sum_{n \in \N} c_nf_n$$ is a linear homeomorphic embedding.

A Banach space $V$ is said to be {\em Rosenthal} if it does
not contain an isomorphic copy of $l_1$. 
Every Asplund (in particular, every reflexive) space is Rosenthal. 
Recall that a dynamical system $(S,X)$ is WRN 
means that it is representable on a Rosenthal Banach space.
 

%

\begin{defin} \label{d:frag}
Let $X$ be a topological space and $(Y,\mu)$ is a uniform space.  
We say that a  function $f : X \to (Y,\mu)$ is
\emph{fragmented},
if for every $\eps \in \mu$ and every nonempty closed subset $A$ of $X$, 
 there exists a non-void relatively open subset $O \subset A$
such that $f(O)$ is $\eps$-small. 
\end{defin}

Write $\mathcal{F}(X)$ for the collection of real valued fragmented functions on $X$.
We note that when $X$ is a compact metrizable space, $f: X \to \R$ is 
fragmented iff it is of Baire class 1 iff $f$ is a pointwise limit of a sequence of continuous functions.

\begin{defin} \label{d:Ros-F} \cite{GM-rose}
Let $X$ be a topological space. We say that a subset $F\subset
C(X)$ is a \emph{Rosenthal family}
(for $X$) if $F$ is norm bounded and
the pointwise closure ${\cls_p}(F)$ of $F$ in $\R^X$ consists of fragmented maps,
that is,
${\cls_p}(F) \subset {\mathcal F}(X)$.
\end{defin}


\begin{defin}\label{d:ind}
	A sequence $f_n$ of
	real valued functions on
	a set
	$X$ is said to be \emph{independent} if
	there exist real numbers $a < b$ such that
	$$
	\bigcap_{n \in P} f_n^{-1}(-\infty,a) \cap  \bigcap_{n \in M} f_n^{-1}(b,\infty) \neq \emptyset
	$$
	for all finite disjoint subsets $P, M$ of $\N$.
\end{defin}

\begin{defin} \label{d:tameF} \cite{GM-tame} 
We say that a bounded family $F$ of real valued (not necessarily, continuous)  
functions on a set $X$ is {\it tame} if $F$ does not contain an independent sequence.
\end{defin}

The following useful result combines some known facts.
It is based on results of Rosenthal \cite{Ro}, Talagrand \cite[Theorem 14.1.7]{Tal} and van Dulst \cite{Dulst}.

\begin{thm} \label{f:sub-fr}
Let $X$ be a compact space and $F \subset C(X)$ a bounded subset.
The following conditions are equivalent:
\begin{enumerate}
\item  $F$ is a tame family. 
\item
$F$ does not contain a subsequence equivalent to the unit basis of $l_1$.
\item
$F$ is a Rosenthal family for $X$.
\end{enumerate}
\end{thm}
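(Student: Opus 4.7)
The plan is to derive the three-way equivalence by assembling three classical ingredients: Rosenthal's $\ell_1$-theorem in its combinatorial form, Talagrand's characterization of Rosenthal families, and van Dulst's fragmentation criterion. Because the paper explicitly flags the result as a compilation of known facts, the proof will be citation-driven rather than computational.

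First I would treat $(1) \Leftrightarrow (2)$. The direction $(2) \Rightarrow (1)$ is elementary: given an $\ell_1$-subsequence $(f_n)$ with constant $a>0$, one tests against $\pm 1$ sign vectors to exhibit, for each finite partition $P \sqcup M$ of $\mathbb{N}$, a point $x \in X$ at which $f_n(x)$ separates into strict upper and lower bands, which is exactly Definition \ref{d:ind} after an affine normalization of $F$. The reverse implication $(1) \Rightarrow (2)$ is the content of Rosenthal's $\ell_1$-theorem in its function-theoretic form (in the Rosenthal--Dor reformulation): a uniformly bounded sequence of real functions with no independent subsequence has a pointwise Cauchy subsequence, hence no subsequence can be $\ell_1$-equivalent. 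Equivalently, any independent subsequence yields an $\ell_1$-estimate by a standard averaging / Riesz-type argument.

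Next I would handle $(2) \Leftrightarrow (3)$ by invoking \cite[Theorem~14.1.7]{Tal}: for a norm-bounded $F \subset C(X)$ on a compact space $X$, the pointwise closure $\cls_p(F) \subset \R^X$ consists entirely of fragmented maps if and only if $F$ contains no subsequence equivalent to the $\ell_1$-unit basis. The direction $(3) \Rightarrow (2)$ is the easy half: if some $(f_n) \subset F$ were $\ell_1$-equivalent, a convex combinations / oscillation argument (going back to Rosenthal) produces a non-fragmented element in $\cls_p(\{f_n\}) \subset \cls_p(F)$, contradicting $(3)$. For $(2) \Rightarrow (3)$ one uses the stable-family machinery of Talagrand, combined with van Dulst's framework \cite{Dulst} which phrases the Baire-$1$ dichotomy in terms of fragmentation valid on non-metrizable compacta.

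The main obstacle is the non-metrizable setting. In the metric case, $(2) \Leftrightarrow (3)$ follows cleanly from the Bourgain--Fremlin--Talagrand dichotomy with ``fragmented'' replaced by ``Baire class $1$'', and the extraction of pointwise Cauchy subsequences suffices. For general compact $X$ one cannot reduce to sequences on a countable dense set; instead, for each $p \in \cls_p(F)$ and each nonempty closed $A \subset X$ one must locate a relatively open $O \subset A$ on which $p$ has small oscillation, using a transfinite argument on the family of ``bad'' closed sets. This is precisely where Talagrand's stability combinatorics enter, and where citing \cite[Thm.~14.1.7]{Tal} together with \cite{Dulst} completes the circle $(1) \Rightarrow (2) \Rightarrow (3) \Rightarrow (1)$.
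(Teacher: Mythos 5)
Your proposal is correct and takes essentially the same route as the paper, which gives no proof of Theorem~\ref{f:sub-fr} at all beyond stating that it combines results of Rosenthal \cite{Ro}, Talagrand \cite[Theorem 14.1.7]{Tal} and van Dulst \cite{Dulst} --- precisely the three sources you assemble, matched to the same implications. One small correction: your direction labels in $(1)\Leftrightarrow(2)$ are interchanged, and the genuinely elementary half is \emph{independent $\Rightarrow$ $\ell_1$} (the averaging estimate with constant $\frac{b-a}{2}$), whereas producing an independent subsequence from an $\ell_1$-subsequence is the nontrivial combinatorial core of Rosenthal's theorem and does not follow from merely testing $\pm 1$ sign vectors, so your ``elementary'' sketch for that half would not stand on its own without the citation you in any case supply.
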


\begin{thm} \label{Tamecriterion} 	
Let $X$ be a compact $S$-space. 
Suppose there is a point separating bounded $S$-invariant family $F$ of continuous 
real valued functions on $X$ such that $F$ is a tame family.
Then $(S,X)$ is a tame system. 
\end{thm}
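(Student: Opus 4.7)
The plan is to show that the hypothesis forces every continuous function on $X$ to be tame, from which the tameness of the system $(S,X)$ follows directly from Definition \ref{d:tameDS}. The main lever will be the fact, recorded in Section \ref{s:classes}, that $\mathrm{Tame}(X)$ is a norm-closed $S$-invariant subalgebra of $C(X)$.

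First I would argue that $F \subset \mathrm{Tame}(X)$. Fix $f \in F$. Because $F$ is $S$-invariant, the orbit $fS = \{f \circ \tilde s : s \in S\}$ is a subfamily of $F$, and any subfamily of a tame family is clearly tame. So $fS$ contains no independent sequence and, by Theorem \ref{f:sub-fr}, is a Rosenthal family for $X$. Therefore $f$ comes from a tame $S$-compactification of $X$ (this is essentially the definition, combined with the characterization of tame systems via fragmented elements of the enveloping semigroup), and so $f \in \mathrm{Tame}(X)$.

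Next I would invoke the algebraic structure. Let $\A$ be the unital subalgebra of $C(X)$ generated by $F$. Since $\mathrm{Tame}(X)$ is a subalgebra of $C(X)$ containing the constants and containing $F$ (by the previous step), we have $\A \subset \mathrm{Tame}(X)$. The hypothesis that $F$ separates the points of $X$ ensures that $\A$ separates points as well, so by the Stone--Weierstrass theorem $\A$ is norm dense in $C(X)$. Since $\mathrm{Tame}(X)$ is norm closed, it follows that $\mathrm{Tame}(X) = C(X)$. In particular, for every $f \in C(X)$ the orbit $fS$ is a tame family, which is precisely the assertion that $(S,X)$ is a tame dynamical system.

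The only nontrivial ingredient is the algebra property of $\mathrm{Tame}(X)$, which is cited from \cite{GM-AffComp,GM-survey}. If one wished to give a self-contained argument, the main obstacle would be to verify directly that tameness of a bounded pointwise-closed family of functions on $X$ is preserved under sums, products and scalar multiples; this can be done by combining Rosenthal's $l_1$-dichotomy (Theorem \ref{f:sub-fr}) with the fact that the class of fragmented maps into $\R$ is closed under the same algebraic operations, so that the pointwise closure of the generated algebra still lies in $\mathcal{F}(X)$.
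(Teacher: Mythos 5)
Your proof is correct, but it takes a genuinely different route from the paper's. The paper works directly at the level of the enveloping semigroup: for any $p \in E(S,X)$ and $f \in F$, the composition $f \circ p$ lies in the pointwise closure $\cls_p F$, which by Theorem \ref{f:sub-fr} consists of fragmented functions; since $F$ separates points, \cite[Lemma 2.3.3]{GM-rose} upgrades this to fragmentability of $p$ itself, and the fragmentability characterization of tameness finishes. You instead work at the level of the function algebra, deducing $F \subset \mathrm{Tame}(X)$ and then using the norm-closed algebra property plus Stone--Weierstrass to get $\mathrm{Tame}(X) = C(X)$, which buys you a formally stronger conclusion than the theorem asks for. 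One caveat: your parenthetical ``this is essentially the definition'' undersells the one substantive step. The definition of a tame function gives the \emph{easy} direction; what you need is the converse --- that tameness of the orbit $fS$ forces $f$ to come from a tame $S$-compactification. That is a genuine theorem (it is in \cite{GM-rose}, so the citation is legitimate), and its proof is essentially the paper's own argument applied to the cyclic system $X_f$ generated by $f$, where the orbit $\tilde{f}S$ is a point-separating, bounded, $S$-invariant tame family --- i.e., exactly the hypotheses of Theorem \ref{Tamecriterion}. So at bottom the two proofs run on the same engine, and yours repackages it through heavier cited machinery. Your closing sketch of a self-contained variant does work, and is arguably the cleanest of the three: bounded fragmented functions are closed under sums, products and uniform limits, and a subnet argument in $\R^X$ shows that for any polynomial $h = P(f_1,\dots,f_k)$ in members of $F$ one has $\cls_p(hS) \subset \mathcal{F}(X)$, whence by Theorem \ref{f:sub-fr} and uniform approximation every $f$ in the uniform closure of the generated algebra --- that is, every $f \in C(X)$ --- has a tame orbit, verifying Definition \ref{d:tameDS} directly and bypassing $E(X)$ altogether.
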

\begin{proof}
Let $p$ be an arbitrary element of the enveloping semigroup $E(S,X)$.
Let $\mathcal{F} = \cls F$ be the pointwise closure of $F$ in $\R^X$. 
Then every element of $\mathcal{F}$ is fragmented by Theorem \ref{f:sub-fr}. Clearly 
$\{f \circ p : f \in F\}\subset \mathcal{F}$.  As $F$ separates points on $X$ this, in turn, easily implies (see, \cite[Lemma 2.3.3]{GM-rose}) 
that $p$ is a fragmented map. Thus $(S,X)$ is tame by a characterization of tame systems mentioned in Section \ref{s:classes}.
\end{proof}


In fact, we have the following sharper 
statement (see \cite[Theorem 6.5]{GM-rose}, and with more details \cite[Theorem 3.12]{GM-tame}):

\begin{thm} \label{WRNcriterion} 
Let $X$ be a compact $S$-space. The following conditions are equivalent:
	\begin{enumerate}
		\item $(S,X)$ is $\mathrm{WRN}$, that is, Rosenthal representable. 
		\item there exists a point separating bounded $S$-invariant family $F$ of continuous real functions on $X$ such that $F$ is a tame family (equivalently, Rosenthal family).
	\end{enumerate}
\end{thm}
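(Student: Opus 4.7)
The plan is to prove the two implications separately; (1) $\Rightarrow$ (2) is essentially direct, while (2) $\Rightarrow$ (1) carries the main technical weight.

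For (1) $\Rightarrow$ (2), I would start from a Rosenthal representation $(h, \alpha) \colon (S, X) \to (\Iso(V), B_{V^*})$, where $V$ is a Rosenthal Banach space, $h \colon S \to \Iso(V)$ is a continuous (co-)representation into the group of linear isometries with the strong operator topology, and $\alpha \colon X \to B_{V^*}$ is a weak-$*$ continuous $S$-equivariant topological embedding into the unit ball. Define the matrix coefficient family $F := \{\tilde v : v \in B_V\}$ by $\tilde v(x) := \langle v, \alpha(x) \rangle$. Then $F \subset C(X)$ is norm-bounded by $1$, separates points of $X$ (since the weak-$*$ topology on $\alpha(X)$ is determined by $B_V$), and is $S$-invariant because $\tilde v \circ \tilde s = \widetilde{h(s)v}$ with $h(s)v \in B_V$. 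For tameness, an $l_1$-subsequence $\{\tilde{v_n}\} \subset F$ would force, via the norming identity $\|v\|_V = \sup_{x \in X} |\tilde v(x)|$, that $\{v_n\}$ itself is an $l_1$-subsequence in $V$, contradicting the Rosenthal property of $V$ by Rosenthal's $l_1$-theorem (Theorem \ref{f:sub-fr}).

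For (2) $\Rightarrow$ (1), the construction I would use sets $V := \overline{\mathrm{span}}_{C(X)}(F)$, a closed linear subspace of $C(X)$. Since $F$ is $S$-invariant, the isometric action of $S$ on $C(X)$ given by $f \mapsto f \circ \tilde s$ restricts to $V$. The evaluation map $\alpha \colon X \to V^*$, $\alpha(x)(v) := v(x)$, is weak-$*$ continuous, $S$-equivariant, lands after rescaling in $B_{V^*}$, and is injective since $F \subset V$ separates points of $X$. This yields all the ingredients of a representation of $(S,X)$ on $V$, and the theorem reduces to showing that $V$ is a Rosenthal Banach space.

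The main obstacle is precisely this Rosenthal property of $V$. By Theorem \ref{f:sub-fr}, tameness of $F$ is equivalent to $F$ being a Rosenthal family, i.e., $\cls_p(F) \subset \mathcal{F}(X)$. I would upgrade this to $\cls_p(B_V) \subset \mathcal{F}(X)$ using two stability properties of the class $\mathcal{F}(X)$ of fragmented real-valued functions: closure under finite linear combinations, and closure under uniform limits. Together these imply that every element of $V$ is itself fragmented, and then a Rosenthal $l_1$-dichotomy argument inside $V$ (each bounded sequence in $V$ contains either an $l_1$-subsequence or a pointwise-Cauchy subsequence whose limit, being fragmented, gives a Baire-$1$ element of $V^{**}$) shows that no sequence in $V$ is equivalent to the unit basis of $l_1$. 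This final step, transferring the tameness of $F$ to the Rosenthal property of its closed $S$-invariant linear span, is the delicate part; it is carried out in \cite[Theorem 6.5]{GM-rose} and \cite[Theorem 3.12]{GM-tame}, which I would cite for the full technical verification.
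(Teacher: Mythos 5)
Your direction (1) $\Rightarrow$ (2) is essentially the standard argument and is sound, up to one inaccuracy: the ``norming identity'' $\|v\|_V = \sup_{x \in X}|\tilde v(x)|$ is not generally valid, since $\alpha(X) \subset B_{V^*}$ need not be a norming set. Fortunately it is also not needed: the trivial inequality $\|\tilde v\|_\infty \leq \|v\|_V$ goes in the right direction, transferring the lower $l_1$-estimate $a\sum|c_i| \leq \|\sum c_i \tilde v_i\|_\infty \leq \|\sum c_i v_i\|_V$ from $C(X)$ to $V$, which already contradicts the Rosenthal property of $V$. Note also that the paper itself does not prove this theorem; it quotes it from \cite[Theorem 6.5]{GM-rose} and \cite[Theorem 3.12]{GM-tame}.

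In direction (2) $\Rightarrow$ (1), however, your construction has a genuine gap: the closed linear span $V := \overline{\mathrm{span}}_{C(X)}(F)$ of a tame family need \emph{not} be a Rosenthal space, so the claimed upgrade $\cls_p(B_V) \subset \mathcal{F}(X)$ is false in general. Concretely, take $X=[0,1]$ with the trivial action, let $T: l_1 \to C[0,1]$ be an isomorphic embedding (such exist, as $C[0,1]$ is universal for separable spaces), and set $F := \{2^{-n}T(e_n) : n \in \N\} \cup \{\mathrm{id}, \mathbf{1}\}$. This $F$ is bounded, point separating, $S$-invariant, and norm-compact, hence tame (every subsequence of an independent sequence is independent and is an $l_1$-sequence, so no independent sequence can have a uniformly convergent subsequence); yet $\overline{\mathrm{span}}(F) \supset T(l_1) \cong l_1$, so by Theorem \ref{f:sub-fr} the ball $B_V$ contains an independent sequence and $\cls_p(B_V) \not\subset \mathcal{F}(X)$. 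The mechanism of failure is that your ``two stability properties'' do not control pointwise closures: $\mathcal{F}(X)$ is closed under sums and uniform limits but \emph{not} under pointwise limits, and an element of $B_V$ is only a uniform limit of elements of $n\cdot\mathrm{absconv}(F)$ with unbounded $n$, so no single bounded tame set contains $B_V$. Your concluding dichotomy step is also circular: Rosenthal's dichotomy offers the $l_1$-alternative but does not rule it out, which is exactly the point at issue.

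The references you cite do prove the theorem, but by a different construction than the one you sketch, and the difference is essential. In \cite{GM-rose} one does not take the closed span; one applies the Davis--Figiel--Johnson--Pe{\l}czy\'{n}ski interpolation technique to the bounded \emph{absolutely convex} set $W$ generated by $F$ (together with a multiple of the unit ball), using two key facts: the convex hull of a bounded set with no $l_1$-sequence again has no $l_1$-sequence, and the DFJP interpolation space of such a convex symmetric set is Rosenthal. The $S$-invariance of $F$ makes the interpolation space an isometric $S$-module, and $\alpha$ arises as the restriction to $X \subset C(X)^*$ of the adjoint of the canonical inclusion, injective by point separation. So your appeal to \cite[Theorem 6.5]{GM-rose} and \cite[Theorem 3.12]{GM-tame} lands on the correct results, but they cannot be read as ``carrying out'' your span-based plan; that plan itself would have to be abandoned, not merely completed.
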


\br	
	
\section{Every c-ordered system is Rosenthal representable} \label{Sec-Ros}  

\subsection{Families of functions with bounded variation are tame} 

\begin{defin} \label{d:BV} \ 
	\ben 
	\item 
	Let $(X,<)$ be a linearly ordered set and $(M,d)$ is a metric space. 
	We say that a bounded function $f: (X,<) \to (M,d)$ has variation not greater than $r$ (notation: $f \in BV_r)$ if 
	\begin{equation}  \label{eq1} 
		\sum_{i=1}^{n-1} d(f(x_i),f(x_{i+1})) \leq r
	\end{equation}

	for every choice of $x_1 \leq x_2 \leq \cdots \leq x_n$ in $X$.
		\item 
	For circularly ordered sets $(X,R)$ (instead of $(X,<)$) the definition is similar but we take \emph{cycles} $x_1, x_2, \cdots, x_n$ in $X$  (Definition \ref{d:cycl}) and require that 
	\begin{equation}  \label{eq2} 
	\sum_{i=1}^{n} d(f(x_i),f(x_{i+1})) \leq r
	\end{equation} 
	where $x_{n+1}=x_1$.    
\een 
\sk 
	The least upper bound of all such possible sums is the  {\it variation} of $f$; notation (both cases):   $\Upsilon(f)$. If  $\Upsilon(f) \leq r$ then we write $f \in BV_r(X,M)$ or, simply $BV_r(X)$ if $(M,d)=\R$. If $f(X) \subset [c,d]$ for some reals $c \leq d$ then we write also $f \in BV_r(X,[c,d])$. 
\end{defin}

\begin{remarks} \label{r:FinCol} \ 
	\ben 
	\item In the case of a linearly ordered set $(X,<)$ 
	denote by $M_+(X,[c,d])$ the set of all order-preserving functions $X \to [c,d]$. 
	Then $M_+(X,[c,d]) \subset BV_r(X,[c,d])$ for every $r \geq d-c$. In particular, $M_+(X,[0,1]) \subset BV_1(X,[0,1])$.  
	\item 
	For every finite interval partition of a c-ordered set $(X,R)$, every finite coloring 
	$f: X \to \Delta \subset \R$  of this partition is a function with bounded variation.   
	\item Note that the sum in Equation \ref{eq2} remains the same under the standard cyclic translation ($+1 \pmod n$). Also, one may reduce the computations in Definition \ref{d:BV}.2 to the injective cycles. 
	\item 
	 Let $q: X(c) \to X$ be the natural quotient (from Proposition \ref{cover}) for some $c \in X$, where 
	 $X(c)$ carries the natural linear order,  
	 and let $f: X(c) \to M$, $f_0: X \to M$ be functions such that $f= f_0 \circ q$. 
	 Then (using Remark \ref{r:FinCol}.3) we have 
	 $$\Upsilon(f) \leq  \Upsilon(f_0) \leq  \Upsilon(f)+\diam M.$$ 
	\item For every COP map $f_1: X \to Y$ and every $f_2 \in BV_r(Y,M)$ we have $f_2 \circ f_1 \in BV_r(X,M)$. 
	
	\een
\end{remarks}

\begin{lem} \label{np} \cite{Me-Helly} 
	For every linearly ordered set $(X,<)$ the set $BV_r(X,[c,d])$ is a tame family of functions. 
	In particular, this	is true also for $M_+(X,[c,d])$. 
\end{lem}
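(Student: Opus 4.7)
The plan is to argue by contradiction. I would assume $\{f_n\}_{n\ge 1}$ is an independent sequence in $BV_r(X,[c,d])$, witnessed by reals $a<b$, and set $\delta := b-a>0$. For each $N$ and each $\epsilon \in \{0,1\}^N$, independence supplies a point $x_\epsilon \in X$ with $f_n(x_\epsilon) < a$ when $\epsilon_n = 0$ and $f_n(x_\epsilon) > b$ when $\epsilon_n = 1$ for all $n \le N$. These $2^N$ points are pairwise distinct, because if $\epsilon$ and $\epsilon'$ differ in some coordinate $n$ then $|f_n(x_\epsilon)-f_n(x_{\epsilon'})| > \delta$.

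Next, using the linear order on $X$, I would enumerate the $2^N$ points in increasing order as $x_{\epsilon^{(1)}} < x_{\epsilon^{(2)}} < \cdots < x_{\epsilon^{(2^N)}}$. For each $n \le N$, let $K_n$ be the number of indices $j$ with $\epsilon^{(j)}_n \ne \epsilon^{(j+1)}_n$; each such ``sign flip'' forces an oscillation of size at least $\delta$ in $f_n$ along this chain, so $\Upsilon(f_n) \ge \delta K_n$. Summing and using that consecutive elements in any enumeration of $\{0,1\}^N$ differ in at least one coordinate,
$$\sum_{n=1}^N K_n \;=\; \sum_{j=1}^{2^N-1} d_H\bigl(\epsilon^{(j)},\epsilon^{(j+1)}\bigr) \;\ge\; 2^N-1,$$
where $d_H$ denotes Hamming distance. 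By pigeonhole, some $n^* \le N$ satisfies $K_{n^*} \ge (2^N-1)/N$, whence $\Upsilon(f_{n^*}) \ge \delta(2^N-1)/N$. Choosing $N$ large enough that $\delta(2^N-1)/N > r$ contradicts $f_{n^*} \in BV_r(X,[c,d])$. Hence $BV_r(X,[c,d])$ contains no independent sequence and is tame.

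For the ``in particular'' clause, I would invoke Remark \ref{r:FinCol}(1): every order-preserving map $X \to [c,d]$ lies in $BV_{d-c}(X,[c,d])$, and tameness is inherited by subfamilies (an independent subsequence in $M_+$ would be independent in $BV_{d-c}$).

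I do not anticipate a serious obstacle: the only subtlety is making sure the $2^N$ points are genuinely distinct so that the linear enumeration must realize all of $\{0,1\}^N$, which is precisely what forces the cumulative Hamming distance to blow up. This distinctness is immediate from the gap $a<b$, so the whole argument reduces to the Gray-code-type lower bound $2^N-1$ combined with pigeonhole to isolate a single $f_{n^*}$ with too large a variation.
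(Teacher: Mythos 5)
Your proof is correct. The paper itself contains no proof of Lemma \ref{np} --- it defers entirely to the cited note \cite{Me-Helly} --- and your argument is essentially the one that reference supplies: extract the $2^N$ witness points $x_\epsilon$ from the definition of independence, order them along $(X,<)$, note that consecutive (necessarily distinct) $\epsilon$-vectors have Hamming distance at least $1$ so that $\sum_n K_n \ge 2^N-1$, and conclude by pigeonhole that some $f_{n^*}$ has variation at least $\delta(2^N-1)/N > r$, contradicting $f_{n^*} \in BV_r(X,[c,d])$. The two points that genuinely need checking are both handled: the $x_\epsilon$ are pairwise distinct (the gap $b-a$ separates their $f_n$-values whenever $\epsilon,\epsilon'$ differ in coordinate $n$), and the bound $\Upsilon(f_{n})\ge \delta K_{n}$ is legitimate because the increasing chain $x_{\epsilon^{(1)}} < \cdots < x_{\epsilon^{(2^N)}}$ is an admissible choice in Definition \ref{d:BV}; likewise the ``in particular'' clause follows from Remark \ref{r:FinCol}(1) together with the trivial fact that a subfamily of a tame family is tame.
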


Let $(\T,R_{\a})$ be the cascade generated by an irrational rotation $R_{\a}$ of the circle $\T$.
Let $f: =\chi_D: \T \to \{0,1\}$ be the (discontinuous) characteristic function of the arc 
 $D=[a,a+s)\subset \T$. Consider the $\Z$-orbit $F$ of this function induced by the cascade  $(\T,R_{\a})$. Then $F$ is a tame family of (discontinuous) functions on $\T$. 
Much more generally we have: 

\begin{thm} \label{X(c)} 	
	Let $(X, R)$ be 
	a c-ordered set. Then any family of functions 
	$\{f_i: X \to [c,d]\}_{i \in I}$ 
	with finite total variation is tame. 
\end{thm}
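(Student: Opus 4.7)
The plan is to reduce the circularly ordered case to the linearly ordered case (already handled in Lemma \ref{np}) via the quotient map constructed in Proposition \ref{cover}.

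First, I would fix any point $c_0 \in X$ and form the linearly ordered set $X(c_0) = [c_0^-, c_0^+]$ together with the surjective c-order preserving quotient map $q : X(c_0) \to X$. Assuming the hypothesis "finite total variation" means that the family is contained in $BV_r(X,[c,d])$ for some common $r$, I pull back each function by setting $\tilde f_i := f_i \circ q : X(c_0) \to [c,d]$. By Remark \ref{r:FinCol}(4) the variation on the linearly ordered space $X(c_0)$ satisfies
\[
\Upsilon(\tilde f_i) \leq \Upsilon(f_i) + \diam[c,d] \leq r + (d-c),
\]
so the pulled-back family $\{\tilde f_i\}_{i \in I}$ sits inside $BV_{r+(d-c)}(X(c_0),[c,d])$.

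Next, I invoke Lemma \ref{np} on the linearly ordered set $X(c_0)$: the family $BV_{r+(d-c)}(X(c_0),[c,d])$ is tame, hence $\{\tilde f_i\}_{i \in I}$ contains no independent sequence in the sense of Definition \ref{d:ind}.

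Finally, I transfer tameness back to $X$ using the surjectivity of $q$. Suppose, for contradiction, that $\{f_{i_n}\}_{n \in \N}$ is an independent subsequence with witnessing scalars $a < b$. For any finite disjoint $P, M \subset \N$ one may pick
\[
x \in \bigcap_{n \in P} f_{i_n}^{-1}(-\infty,a) \;\cap\; \bigcap_{n \in M} f_{i_n}^{-1}(b,\infty),
\]
and then any $\tilde x \in q^{-1}(x)$ satisfies $\tilde f_{i_n}(\tilde x) = f_{i_n}(x)$ for every $n$, so $\tilde x$ witnesses the same intersection for $\{\tilde f_{i_n}\}$. This contradicts the tameness of $\{\tilde f_i\}$ established above. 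Therefore $\{f_i\}_{i \in I}$ is tame.

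There is no real obstacle here beyond keeping the bookkeeping straight: the only mildly delicate point is the variation bound $\Upsilon(\tilde f_i) \leq \Upsilon(f_i) + (d-c)$, which accounts for the one extra ``jump'' created when the circle is cut open at $c_0$ and the two lifts $c_0^-, c_0^+$ are added as the new endpoints of the linear order. Once Remark \ref{r:FinCol}(4) supplies this inequality, Lemma \ref{np} and the surjectivity of $q$ do all the remaining work.
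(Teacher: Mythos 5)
Your proof is correct and is essentially the paper's own argument: cut the circle at a point via Proposition \ref{cover}, control the variation of the pullbacks $f_i \circ q$ by Remark \ref{r:FinCol}.4, apply Lemma \ref{np} on the linearly ordered cut space $X(c_0)$, and transfer tameness back through the surjection $q$ (the paper states this last step as ``$\{f_i\}$ is tame iff so is $\{f_i \circ q\}$'', which your independence-lifting argument makes explicit). One minor point: Remark \ref{r:FinCol}.4, with $f = f_i \circ q$ and $f_0 = f_i$, actually gives the sharper bound $\Upsilon(f_i \circ q) \leq \Upsilon(f_i)$ --- the $+\,\diam$ correction is needed only in the reverse direction --- but your weaker estimate $\Upsilon(f_i \circ q) \leq \Upsilon(f_i) + (d-c)$ is still a valid uniform bound and the argument goes through unchanged.
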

\begin{proof} 
	It suffices to show that $BV_r(X,[c,d])$ is tame for every $r>0$.  
	The case of a c-ordered $(X, R)$ can be reduced to the linearly ordered cut space $X(c)$, 
	where we have Lemma \ref{np}, using Remark \ref{r:FinCol}.4 and 
the observation that the family $\{f_i\}$ is tame iff so is $\{f_i \circ q\}$.
\end{proof}

%
%
%
%

\sk

We are now ready to prove: 

\begin{thm} \label{t:CoisWRN} 
	Every c-ordered compact, not necessarily metrizable, $S$-space $X$ is Rosenthal representable 
	(that is, $\mathrm{WRN}$), hence, in particular, tame. 
	So, $\mathrm{CODS} \subset \mathrm{WRN} \subset \mathrm{Tame}$. 
\end{thm}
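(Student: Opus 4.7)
The plan is to verify the hypothesis of Theorem \ref{WRNcriterion} by exhibiting a bounded, point-separating, $S$-invariant family $F\subset C(X)$ contained in $BV_r(X,[0,1])$ for some fixed $r$; tameness of $F$ is then automatic from Theorem \ref{X(c)}, and Rosenthal representability follows. The chain $\mathrm{CODS}\subset\mathrm{WRN}\subset\mathrm{Tame}$ is then immediate, since the second inclusion is already recorded.

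The core step is a ``tent'' construction. For each pair of distinct points $x,y\in X$ I will produce a continuous $f_{x,y}:X\to[0,1]$ with $f_{x,y}(x)\ne f_{x,y}(y)$ and variation $\Upsilon(f_{x,y})\le 2$. Choose a cut-point $c\notin\{x,y\}$ (feasible whenever $|X|\ge 3$; smaller cases are trivial) and pass to the compact LOTS $X(c)=[c^-,c^+]$ from Proposition \ref{cover}, after possibly relabelling so that $c^- <_c x <_c y <_c c^+$. Using normality of compact LOTS (Urysohn) together with a monotonization such as $\psi\mapsto\sup_{w\le\cdot}\psi(w)$ and its dual, I will construct a continuous $\tilde f:X(c)\to[0,1]$ that vanishes on $[c^-,x]$, rises monotonically from $0$ to $1$ on $[x,y]$, stays at $1$ on a nontrivial closed subinterval $[y,z]$, and then decreases monotonically back to $0$ on $[z,c^+]$. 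Since $\tilde f(c^-)=\tilde f(c^+)=0$, the function $\tilde f$ descends through the quotient $q_c$ to a continuous $f_{x,y}:X\to[0,1]$. A direct cycle-wise computation exploiting $\tilde f(c^-)=\tilde f(c^+)$ gives $\Upsilon(f_{x,y})\le\Upsilon(\tilde f)=2$, so the $+\diam M$ slack in Remark \ref{r:FinCol}.4 does not actually incur any loss. Set $F_0:=\{f_{x,y}:x\ne y\}\subset C(X)\cap BV_2(X,[0,1])$; it is point-separating by construction.

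Finally, take $F:=\{sf:s\in S,\ f\in F_0\}$, which is $S$-invariant, point-separating, and bounded by $1$. Since each translation $\tilde s:X\to X$ is COP by Definition \ref{d:COsystem}, Remark \ref{r:FinCol}.5 gives $sf=f\circ\tilde s\in BV_2(X,[0,1])$, hence $F\subset BV_2(X,[0,1])$. Theorem \ref{X(c)} implies that $F$ is tame, and Theorem \ref{WRNcriterion} then gives that $(S,X)$ is $\mathrm{WRN}$. The main obstacle I anticipate is the tent construction on a possibly non-metrizable, possibly non-connected compact LOTS $X(c)$: Urysohn alone supplies a continuous separating function but not a monotone one, so without an additional monotonization step the variation is uncontrolled. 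The running-supremum and running-infimum monotonizations are continuous on a compact LOTS by a standard order-theoretic argument, and a careful choice of cut-point $c$—always feasible once $|X|\ge 3$—handles the degenerate situations in which some relevant sub-interval would otherwise collapse.
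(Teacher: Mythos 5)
Your proposal is correct and follows essentially the same route as the paper: cut at a third point $c$ via Proposition \ref{cover}, build a continuous ``tent'' of uniformly bounded variation on the compact LOTS $X(c)$ separating the two given points, descend through $q_c$ (using $\tilde f(c^-)=\tilde f(c^+)$), take the $S$-orbit of these functions (COP translations preserve variation by Remark \ref{r:FinCol}.5), and conclude via Theorems \ref{X(c)} and \ref{WRNcriterion}. The only difference is cosmetic: where you obtain the monotone pieces by Urysohn plus a running-supremum monotonization (valid, since the sup-monotonization of a continuous function on a compact LOTS is continuous), the paper simply invokes Nachbin's theorem that continuous order-preserving maps on a compact LOTS separate points; your sharper bound $\Upsilon(f_{x,y})\le 2$ versus the paper's $\le 3$ is a harmless refinement.
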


\begin{proof}
Let $X$ be a c-ordered compact $S$-system. We have to show that the $S$-system $X$ is $\mathrm{WRN}$.
By Theorem \ref{WRNcriterion}, this is equivalent to showing that there exists a point separating bounded $S$-invariant family $F$ of continuous real valued functions on $X$ such that $F$ 
is tame. By Theorem \ref{X(c)} bounded total variation of $F$ is a sufficient condition for its tameness. 
	
	Let $a \neq b$ in $X$. We can assume that $X$ is infinite. Take some third point $c \in X$. As in Proposition \ref{cover} consider the cut at $c$ where $c$ becomes the minimal element. 
	We get a compact linearly ordered set $X(c)=[c^-,c^+]$ and a natural quotient map 
	$$q: X(c) \to X, \ \ q(c^{-})=q(c^{+})=c$$ 
	and $q(x)=x$ in other points.  
	
	We have two similar cases: 
	
	1) $c^- < a < b < c^+$ 
	
	2)  $c^- < b < a < c^+$ 
	
	We explain the proof only for the first case.

	Since $X(c)=[c^-,a] \cup [a,b] \cup [b,c^+]$ is a linearly ordered compact space, its closed intervals $[a,b]$ and $[b,c^+]$  are also compact $\mathrm{LOTS}$. 
	By Nachbin's results \cite{Nach} continuous linear order preserving maps from compact LOTS to $[0,1]$ separate the points. Therefore, 
	one may choose continuous maps 
	$$f_1: [c^{-},a] \to [0,1], \ f_2: [a,b] \to [0,1], \ \ f_3: [b,c^+] \to [0,1]$$ such that $f_1$ is identically zero, $f_2$ and $f_3$ are order preserving, and 
	$$f_2(b)=f_3(b)=1, \ f_2(a)=f_3(c^+)=0.$$
	These three functions define a continuous function $f: [c^-,c^+] \to [0,1]$. It is easy to see that $f$ has 
	 total variation not greater than 2, that is, $f \in BV_2(X(c))$ 
and, clearly,  $0=f(a) \neq f(b)=1$.  
	
The  factor-function $f_0: X \to [0,1]$ (with $q(f_0(x))=f(x)$) is continuous because 
$q: X(c) \to X$ is a quotient map and $f$ is continuous. 
Moreover, by Remark \ref{r:FinCol}.4 we have $$\Upsilon(f) \leq  \Upsilon(f_0) \leq  \Upsilon(f)+1.$$
Thus, $\Upsilon(f_0) \leq 3$. Then, $\Upsilon(f_0 s) \leq 3$ (by Remark \ref{r:FinCol}.5) for every $s \in S$, because every $s$-translation preserves the c-order.  
Define $F:=F_0 S$, where $F_0$ is a set of continuous functions $X \to [0,1]$ with variation $\leq 3$. Since $F_0$ separates the points of $X$, $F_0 S$ is the desired bounded point-separating family of continuous functions which is tame and $S$-invariant. Now apply Theorem \ref{WRNcriterion}. 
\end{proof}

\subsection{Some purely topological notes} 
\label{s:topology}

As a direct topological consequence of Theorem  \ref{t:CoisWRN} note that every compact COTS is WRN.  
For instance, the two arrows space $K$ is Rosenthal representable. At the same time, $K$ is not Asplund representable (that is not RN) by a result of Namioka \cite[Example 5.9]{N}.  
That is, $K \in \mathrm{WRN} \setminus \mathrm{RN}$.

In a recent paper  \cite{mc} Martinez-Cervantes shows that a continuous image of a WRN compact space need not be WRN. This answers a question from \cite{GM-tame}. Note that $\beta \N$ is not WRN, a result of Todor\u{c}evi\'{c} (see \cite{GM-tame}).  Another result from  \cite{mc}, shows that  the \emph{Talagrand's compact} is also not WRN. 

\subsection{Representations of topological groups on Rosenthal spaces} 
\label{s:TopGr} 

\begin{thm} \label{t:GrRepr} 
Topological group $H_+(X)$ (with compact open topology) is Rosenthal representable for every c-ordered compact space $X$. For example, it is true for $H_+(\T)$.  
\end{thm}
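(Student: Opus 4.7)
The plan is to apply Theorem~\ref{t:CoisWRN} to the tautological action of $G := H_+(X)$, equipped with the compact-open topology, on $X$, and then to promote the resulting dynamical Rosenthal representation to a representation of the topological group $G$ itself on a Rosenthal Banach space. Since $X$ is compact Hausdorff, the classical result of Arens makes $\Homeo(X)$ with the compact-open topology a topological group acting jointly continuously on $X$, so the same holds for its subgroup $G$. Every element of $G$ preserves the c-order by definition, hence $(G,X) \in \mathrm{CODS}$ and Theorem~\ref{t:CoisWRN} gives $(G,X) \in \mathrm{WRN}$.

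By Theorem~\ref{WRNcriterion} this yields a point-separating bounded $G$-invariant family $F$ of continuous functions on $X$ (concretely: the $G$-orbit of the functions $f_0$ constructed from cuts $X(c)$ in the proof of Theorem~\ref{t:CoisWRN}, tame by Theorem~\ref{X(c)}), a Rosenthal Banach space $V$ spanned by this family inside $C(X)$, together with an isometric co-representation
\[
h \colon G \longrightarrow \Iso(V), \qquad h(g)\,f := f \circ g^{-1},
\]
continuous from the compact-open topology on $G$ to the strong operator topology on $\Iso(V)$; this continuity is automatic, since $g_i \to g$ uniformly on $X$ implies $g_i^{-1} \to g^{-1}$ uniformly, and then $\|f\circ g_i^{-1}-f\circ g^{-1}\|_\infty \to 0$ for every $f \in V \subset C(X)$ by uniform continuity of $f$.

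Injectivity of $h$ is immediate: if $h(g)=\id_V$ then $f\circ g^{-1}=f$ for every $f \in F$, and since $F$ separates points of $X$, $g=\id_X$. The core step is to show that $h$ is a topological embedding. Assume $h(g_i)\to \id_V$ in the strong operator topology; then $\|f\circ g_i^{-1}-f\|_\infty \to 0$ for every $f \in F$, so $f(g_i^{-1}(x))\to f(x)$ for each $x \in X$ and $f \in F$. Since $F$ is a point-separating family of continuous functions on the compact Hausdorff space $X$, the evaluation map $X \hookrightarrow \R^F$ is a topological embedding, yielding pointwise convergence $g_i^{-1}(x)\to x$ on $X$.

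The main obstacle is to upgrade this pointwise convergence to uniform convergence on $X$. I would extract this as an auxiliary lemma specific to the c-ordered setting: \emph{for any compact c-ordered space $X$, on $H_+(X)$ the topologies of pointwise and uniform convergence coincide}. The proof is a Dini-type argument exploiting the c-order. Given $\eps>0$ and a compatible uniformity on $X$, use compactness to choose a finite cycle $a_1,\ldots,a_n$ in $X$ so that each arc $[a_k,a_{k+1}]_R$ has diameter less than $\eps$. By pointwise convergence at the finitely many $a_k$'s, eventually $g_i^{-1}(a_k)$ is $\eps$-close to $a_k$ for all $k$. Since $g_i^{-1} \in H_+(X)$ preserves the c-order, it carries the cycle $(a_k)$ to the cycle $(g_i^{-1}(a_k))$, and hence each arc $[a_k,a_{k+1}]_R$ onto the arc $[g_i^{-1}(a_k),g_i^{-1}(a_{k+1})]_R$; for sufficiently fine partition this latter arc must be the ``short'' one (any other choice would force the image of some other partition point outside its $\eps$-ball, contradicting its cyclic alignment with the $a_j$'s), so it has diameter $O(\eps)$ and lies within an $O(\eps)$-neighborhood of $[a_k,a_{k+1}]_R$. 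Hence $\sup_{x\in X} d(g_i^{-1}(x),x)=O(\eps)$ eventually, and inversion continuity gives $g_i\to \id$ uniformly on $X$. This shows $h$ is a topological group embedding of $G$ into $\Iso(V)$, exhibiting $H_+(X)$ as a Rosenthal representable topological group.
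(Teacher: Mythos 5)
Your skeleton matches the paper's: apply Theorem \ref{t:CoisWRN} to the tautological action of $G=H_+(X)$ on $X$, then upgrade the dynamical representation to a topological group embedding $G\hookrightarrow \Iso(V)$. But two of your steps are genuinely wrong as written. First, the Rosenthal space cannot be taken to be ``the closed linear span of $F$ inside $C(X)$'': the closed span of a tame family need not be Rosenthal. For $X=[0,1]$ the family $M_+([0,1],[0,1])$ of monotone continuous functions is tame (Lemma \ref{np}), yet every piecewise linear function is a difference of two monotone ones, so the sup-norm closed span of this family is all of $C[0,1]$, which certainly contains $l_1$. This is precisely why Theorem \ref{WRNcriterion} (via \cite{GM-rose}) produces $V$ through a nontrivial factorization construction, delivered as a pair $(h,\alpha)$ with $\alpha: X\to B^*$ an equivariant embedding into the weak-star compact dual ball; the paper's own proof uses exactly this pair, together with the known fact that the strong operator topology on $\Iso(V)^{op}$ coincides with the compact-open topology of its action on $(B^*,w^*)$, which settles the embedding question in one line.

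Second, your ``main obstacle'' is illusory, and the patch you invent for it is flawed. Strong operator convergence $h(g_i)\to \mathrm{id}_V$ already gives the \emph{uniform} statement $\|f\circ g_i^{-1}-f\|_\infty\to 0$ for every $f$ in a point-separating family of continuous functions --- you note this and then discard it, keeping only pointwise convergence. Since $X$ is compact Hausdorff, its unique compatible uniformity is generated by the pseudometrics $d_f(x,y)=|f(x)-f(y)|$, $f\in F$ (the evaluation $X\hookrightarrow \R^F$ is a uniform embedding), so $\sup_{x\in X} d_f\bigl(g_i^{-1}(x),x\bigr)\to 0$ for all $f\in F$ \emph{is} uniform convergence $g_i^{-1}\to \mathrm{id}_X$; no order-theoretic Dini lemma is needed. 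Moreover, your proof of that auxiliary lemma fails in general: a compact c-ordered space need not admit a finite cyclic partition into arcs of diameter less than $\eps$. In any split space $\Sp(X;A)$ --- e.g., the double circle $\T_{\T}$, a central example of this paper --- the pair $a^-,a^+$ has empty interval between its members yet lies at a fixed positive distance, so every arc containing both has diameter bounded away from zero; your partition argument only works for spaces like $\T$ itself, while the theorem is about general, possibly non-metrizable and disconnected, c-ordered compacta. Both defects are repairable: replace the naive span by the representation $(h,\alpha)$ from Theorem \ref{WRNcriterion} and run your own sup-norm observation through the functions $x\mapsto \langle v,\alpha(x)\rangle$, at which point your argument collapses to the paper's short proof.
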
 
\begin{proof} (See also \cite{GM-tame}) Let $G:=H_+(X)$ with its compact open topology. 
The dynamical $G$-system $X$ admits a representation $(h, \a)$
 $$
 h: G \to \Iso(V), \ \ \a: X \to B^*
 $$ 
 on a Rosenthal Banach space $V$ by Theorem \ref{t:CoisWRN}. Then the homomorphism 
 $$
 h^*: G \to \Iso(V), \  \ g \mapsto h(g^{-1})
 $$ is a topological group embedding because 
 the strong operator topology on $\Iso(V)^{op}$ is identical with the compact open topology inherited from the action of this group on the weak-star compact unit ball $(B^*,w^*)$ in the dual $V^*$.  
	\end{proof}

The Ellis compactification $j: G \to E(G,\T)$ of the Polish group $G=H_+(\T)$ is a topological embedding.
In fact, observe that
the compact open topology on $j(G) \subset C_+(\T, \T)$ coincides
with the pointwise topology.
This observation implies, by \cite[Remark 4.14]{GM-survey} that
$\mathrm{Tame}(G)$ separates points and closed subsets.

Although $G$ is representable on a (separable) Rosenthal Banach space,
we have $\Asp(G)=\{constants\}$ and therefore any Asplund representation of this group is trivial
(this situation is similar to the case of the group $H_+[0,1]$, \cite{GM-suc}).
Indeed, we have $\SUC(G)=\{constants\}$ by \cite[Corollary 11.6]{GM-suc} for $G=H_+(\T)$, and we
recall that for every topological group
$\Asp(G) \subset \SUC(G)$.

\sk
Theorem \ref{t:GrRepr} suggests the following:

\begin{defin} \label{d:pseudolinear}
	Let us say that a topological group $G$ is 
	\emph{c-orderly} if $G$ is a topological subgroup of $H_+(X)$ for some circularly ordered compact space $X$.
\end{defin}

Thus, every orderly topological group $G$ is Rosenthal representable.
Recall that it is unknown yet (see \cite{GM-AffComp,GM-survey}) whether every Polish group is Rosenthal representable.

\begin{remark} \label{r:reversing}  
If $H < G$ is a subgroup of finite index and (G,X) is a dynamical system,
then clearly $F E(H,X) = E(G,X)$, with $F$ a finite set of representatives
of cosets of $H$ in $G$. Thus $(G,X)$ is tame iff $(H,X)$ is tame.
In particular, we see that the action of $H_{\pm}(\T)=H(\T)$ on the 
circle $\T$ is tame. 
In fact, $H_{\pm}(X)=H(X)$ for every connected c-ordered compact space $X$, where
$H_{\pm}(X)$ is the group of all bijections $h: X \to X$ which are either c-order preserving or c-order reversing.
This follows from \cite[Theorem 14]{Kok}
\end{remark}

\subsection{When the universal system $M(G)$ is c-ordered ?} 

Recall that for every topological group $G$ there exists, the canonically defined, 
universal minimal system $M(G)$
and universal irreducible affine $G$-system $I\!A(G)$. 
See for example, \cite{Ellis, Gl-book1,PestBook,UspComp,KPT}. 
In \cite{GM-tame} we discuss some examples of Polish groups $G$, 
for which $M(G)$ and $I\!A(G)$ are tame. 
These properties can be viewed as natural generalizations of 
extreme amenability and amenability, respectively. 

\begin{question}
	Find examples where $M(G)$ is c-ordered (more generally, tame).
\end{question}

Let us say that $G$ is \emph{intrinsically c-ordered} (\emph{intrinsically tame}) if the $G$-system $M(G)$ is c-ordered (respectively, tame). 
In particular, we see that $G=H_+(\T)$ 
is intrinsically c-ordered, using a well known result of Pestov \cite{Pest98} which identifies $M(G)$ as the tautological action of $G$ on the circle $\T$. 
Note also that 
the Polish groups $\Aut(\mathbf{S}(2))$ and 
$\Aut(\mathbf{S}(3))$, of automorphisms 
of the circular directed graphs $\mathbf{S}(2)$ and $\mathbf{S}(3)$, are also intrinsically c-ordered. 
The universal minimal $G$-systems for the groups $\Aut(\mathbf{S}(2))$ and 
$\Aut(\mathbf{S}(3))$ are computed in \cite{van-the}. One can show that $M(G)$ for these groups are c-ordered, see \cite{GM-tame}.

\br

\section{Some Sturmian like symbolic $\Z^k$-systems are c-ordered} 
\label{s:m}

We will see that several coding functions 
come from c-ordered systems (in particular, are tame) including some
multidimensional analogues of Sturmian sequences.
The latter are defined on the groups $\Z^k$ and instead of the characteristic function
$f:=\chi_D$ (with $D=[0,c)$) we consider finite coloring of the space leading to shifts with finite alphabet.
\begin{defin}  \label{d:ColFun} 
	Let $X$ be a c-ordered set and $c_0, c_1, \dots, c_{d}$ be a cycle of distinct elements. 
	Consider a finite cyclic partition
	$$
	X=\cup_{i=0}^{d} [c_i,c_{i+1}),
	$$	
	where $c_{d+1}=c_0$. 
	We say that 
	$f: X \to \Delta:=\{0, 1, \dots, d\}$ is a \emph{coloring function} if $f$ is constant on 
	each arc $[c_i,c_{i+1})$.  
	If, in addition, different arcs have different colors (equivalently, $f$ is onto) we say that $f$ is \emph{proper}.
	For instance, the following standard coloring function  is proper (and a COP map with $\Delta=C_{d+1}$). 
	$$
	f: X \to \Delta:=\{0, \dots ,d\}, \ \ \ f(t)=i \ \text{iff} \ t \in [c_i,c_{i+1}).
	$$
\end{defin}

\begin{defin} \label{d:ColFunT} 
	For $X:=\T$ consider a cyclic partition $\T=\cup_{i=0}^{d} [c_i,c_{i+1})$ and a proper coloring function $f: \T \to \Delta$. For a given $k$-tuple
	$(\a_1,\dots ,\a_k) \in \T^k$ of angles, where at least one of them is irrational,
	define the homomorphism 
	$$
	h: \Z^k \to \T, \ \ \ s:=(n_1, \dots,n_k) \mapsto n_1 \a_1 + \dots + n_k \a_k,
	$$
	and the induced c-order preserving action $\Z^k \times \T \to \T, \ (s,t) \mapsto t+ h(s)$. 
	Given a point $z \in \T$ consider the corresponding coding function
	$$
	\varphi=m(f,z): \Z^k \to \{0, \dots ,d\} \ \ \ (n_1, \dots,n_k) \mapsto f(z + n_1 \a_1 + \cdots + n_k \a_k).
	$$
	We call such a sequence a \emph{multidimensional 	$(k,d)$-Sturmian like sequence}. 
	
\end{defin}

%
%

\sk

The case of $k=1$ is studied in \cite{Pikula, Auj}.  
For some multidimensional Sturmian like $\Z^k$-systems see, for example, \cite{BFZ,Fernique}. 
Consider the associated symbolic Sturmian like $\Z^k$-system $G_{\varphi}  \subset  \Delta^{\Z^k}$ (from Definition \ref{d:tametype1}). 
The following theorem yields many examples of c-ordered (in particular, tame) symbolic $\Z^k$-systems and tame coding functions.


 \begin{thm} \label{t:multi} 
 	Let $\varphi=m(f,z)$ be the coding function induced by a 
 	proper coloring function $f:~\T \to \Delta$. 
 	Then the corresponding symbolic Sturmian like $\Z^k$-system $G_{\varphi} \subset  \{0,1, \cdots, d\}^{\Z^k}$ is isomorphic to a  c-ordered minimal metric $\Z^k$-system $\T_A$ from Lemma \ref{c-doubling} for some $A \subset \T$. 
 	Furthermore, $G_{\varphi}$ is a tame $\Z^k$-system and $\varphi \in \mathrm{Tame}(\Z^k)$. 
 \end{thm}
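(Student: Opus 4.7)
My plan is to realize $G_\varphi$ as the splitting $\T_A$ from Lemma~\ref{c-doubling} for a carefully chosen $\Z^k$-invariant set $A \subset \T$, and then invoke Theorem~\ref{t:CoisWRN} to conclude tameness. The natural choice is $A := \bigcup_{i=0}^{d}\Z^k \cdot c_i$, the union of orbits of the cut points of the coloring $f$; this is $\Z^k$-invariant by construction, and since at least one $\alpha_i$ is irrational, the image $h(\Z^k)$ is dense in $\T$, so $(\Z^k,\T)$ is minimal and every point of $A$ is a two-sided limit point of $\T$. Lemma~\ref{c-doubling}(2)(4) will then deliver a minimal c-ordered compact metric $\Z^k$-system $\T_A$ with a c-order preserving equivariant projection $\nu : \T_A \to \T$.

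The next step is to extend $f$ to a continuous map $\tilde f : \T_A \to \Delta$ by putting $\tilde f(c^+) = i$ and $\tilde f(c^-) = i-1 \pmod{d+1}$ for each split point lying above an orbit translate of $c_i$, and $\tilde f = f$ elsewhere. Continuity into the discrete alphabet follows from the description of the standard base of $\tau(R_A)$ in Lemma~\ref{c-doubling}, which makes each fibre $\tilde f^{-1}(i)$ clopen. Then I define the equivariant coding map
$$\Phi : \T_A \to \Delta^{\Z^k}, \qquad \Phi(t)(s) := \tilde f(s\cdot t),$$
and select $\tilde z \in \nu^{-1}(z)$ (taking $\tilde z = z^+$ if $z \in A$, otherwise $\tilde z = z$). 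Right-continuity of $f$ on each arc $[c_i,c_{i+1})$ should give $\Phi(\tilde z) = \varphi$, and surjectivity $\Phi(\T_A) = G_\varphi$ follows from minimality of $\T_A$ together with continuity and equivariance of $\Phi$.

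The main obstacle will be injectivity of $\Phi$. When $t_1 \neq t_2$ share the same $\nu$-image they necessarily form a split pair $\{c^-, c^+\}$ and are distinguished directly by $\Phi(\cdot)(0) = \tilde f$. The harder case is $a := \nu(t_1) \neq \nu(t_2) =: a + \delta$ with $\delta \in \T \setminus \{0\}$; here the idea is to exploit density of $h(\Z^k)$ in $\T$ to find $s$ with $a + h(s)$ and $a + h(s) + \delta$ lying in two different arcs of the cyclic partition. The set of such $x \in \T$ is a nonempty open subset (it contains $[c_j - \delta, c_j)$ for each cut $c_j$), so density supplies such an $s$, and one further arranges $s$ to avoid the at most countably many "bad" positions where $\nu(s\cdot t_j)$ hits a cut. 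Once injectivity is in hand, $\Phi : \T_A \to G_\varphi$ is a continuous equivariant bijection of compact Hausdorff spaces, hence a $\Z^k$-conjugacy.

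Finally, tameness comes for free: Theorem~\ref{t:CoisWRN} applied to the c-ordered compact $\Z^k$-system $\T_A \cong G_\varphi$ places it in $\mathrm{WRN} \subset \mathrm{Tame}$. For $\varphi \in \mathrm{Tame}(\Z^k)$, one factors $\varphi(s) = \pi_0(s\cdot \varphi)$ through the canonical $\Z^k$-compactification $\Z^k \to G_\varphi$, $s \mapsto s\cdot\varphi$, where $\pi_0$ is evaluation at the origin; since the codomain is a tame $\Z^k$-system, the definition of the algebra $\mathrm{Tame}(\Z^k)$ immediately yields $\varphi \in \mathrm{Tame}(\Z^k)$.
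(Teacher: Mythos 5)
Your overall route is essentially the paper's: you choose the same set $A=\Z^k\cdot\{c_0,\dots,c_d\}$, form the same splitting system $\T_A$ via Lemma \ref{c-doubling}, extend $f$ over the split points, and conclude tameness from Theorem \ref{t:CoisWRN} exactly as the paper does. The only genuine difference is packaging: you build a direct coding map $\Phi:\T_A\to\Delta^{\Z^k}$ and argue it is a conjugacy onto $G_\varphi$, whereas the paper compares the two $G$-compactifications $\ga_1:G\to G_\varphi$ and $\ga_2:G\to\T_A$ through their algebras $\A_1\subseteq\A_2$ and reverses the inclusion by showing that the orbit $f^+G$ separates points of $\T_A$ --- which is precisely the injectivity of your $\Phi$, with the same twins/non-twins case split. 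So the two arguments are equivalent in substance; yours is a direct-map repackaging.

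There is, however, a concrete error in your definition of $\tilde f$, and it propagates. The set $A$ consists of \emph{all} orbit translates $a=c_i+h(s)$, and a typical such $a$ is not a cut point but lies in the interior of some arc $[c_j,c_{j+1})$. At the split pair above such an $a$, the only continuous extension assigns the \emph{same} value $f(a)$ (the colour of the ambient arc) to both $a^-$ and $a^+$: with your prescription $\tilde f(a^+)=i$, $\tilde f(a^-)=i-1$ the fibres $\tilde f^{-1}(j)$ are not clopen (non-split points arbitrarily close to $a^{\pm}$ on both sides carry the arc's colour), and moreover $\Phi(\tilde z)=\varphi$ fails, since $\varphi(s)=f(z+h(s))$ is computed from the original arc colours. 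The correct extension is the paper's $f^+$, constant on the clopen arcs $[c_i^+,c_{i+1}^+)$; note also that for a general proper colouring the two values at a genuine cut point are $f(c_i)$ and $f(c_{i-1})$, not necessarily $i$ and $i-1$. Once $\tilde f$ is corrected, your twins case breaks as stated: for a split pair above a non-cut translate $a=c_i+h(g)$ one has $\tilde f(a^-)=\tilde f(a^+)=f(a)$, so coordinate $0$ does \emph{not} separate; you must use the coordinate $-g$, which carries the pair to $c_i^{\pm}$, where properness gives $\tilde f(c_i^+)=f(c_i)\neq f(c_{i-1})=\tilde f(c_i^-)$. As written, your proposal is internally inconsistent: with your $\tilde f$ the coordinate-$0$ separation works but continuity and $\Phi(\tilde z)=\varphi$ fail; with the correct $\tilde f$ these hold but the coordinate-$0$ claim fails.

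A minor further slip in the non-twins case: the set of good positions need not contain all of $[c_j-\delta,c_j)$; if $\delta$ exceeds the relevant arc lengths, $x$ and $x+\delta$ can land in the same arc after wrapping past several cuts (e.g.\ two arcs $[0,\tfrac12)$, $[\tfrac12,1)$, $\delta=0.9$, $x=0.45$). What is true, and suffices, is that for every $\delta\neq 0$ the set of $x$ with $x$ and $x+\delta$ in different arcs contains a nonempty open set (otherwise every arc would be invariant under rotation by $\delta$, forcing a single arc), after which density of $h(\Z^k)$ and your countable-bad-set refinement complete the argument. With these repairs your proof is correct and matches the paper's.
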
   
 \begin{proof} 
 	First observe that the action of $G:=\Z^k$ on $\T$ 
 	$$
 	G \times \T \to \T, \ \ ((n_1, \dots,n_k),t) \mapsto t + n_1 \a_1 + \cdots + n_k \a_k.
 	$$
 	is c-ordered. By our assumption one of the the angles 
 	$\a_1,\dots ,\a_k \in [0,1)$ is irrational, say $\a_1$. 
 	Thus, the image $h(G)$ is dense in $\T$ and the action $G \times \T\to \T$ is minimal. 
 	
 	Let 
 	$E$ be the set of all endpoints $c_i$ of the given arcs from Definition \ref{d:ColFunT}.	
 	Let $A:=GE$. 
 	Consider the splitting points construction. 	
 	By Lemma \ref{c-doubling} we get a compact metrizable minimal c-ordered space $\T_A$, 
 	the  $G$-projection 
 	$	\nu: \T_A \to \T $, 
 	and the continuous action 
 	$$
 	G \times \T_A \to \T_A, \ g (s^+)=(gs)^+, 
 	g (s^-)=(gs)^-, g(x)=gx \ \forall s \in A, \ \forall x \notin A.
 	$$


 	As in Definition \ref{d:ColFunT} consider the  coding function 
 	$$\varphi:=m(f,z) : \Z^k \to \R, \ \ g \mapsto f(g(z))$$ 
 	and the corresponding subshift   
 	(Definition \ref{d:tametype1}.2)
 	$$
 	G_{\varphi} : = \cls_p (G\varphi) \subset  \{0,1, \cdots, d\}^{\Z^k}. 
 	$$ 	
 	Then $G_\varphi$ is a symbolic $\Z^k$-system and $\ga_1: G \to G_{\varphi}, \ g \mapsto g \varphi$ is a $G$-compactification. 
 	
 	Consider also the $G$-compactification  $\ga_2: G \to \T_A$, where $\ga_2$ is the (dense) orbit $G$-map 
 	$\ga_2(g) = g(z^+)$, if $z \in A$ and $\ga_2(g) =  g(z)$, if $z \notin A$.  
 	Our aim is to show that these $G$-compactifications $\ga_1$ and $\ga_2$ are $G$-isomorphic.

 	$G_{\varphi} \subset  \{0,1, \cdots, d\}^{\Z^k}$ is a pointwise compact subset of $C(G)$. 
 	Consider the (pointwise continuous) function 
 	$\widetilde{\varphi}: G_{\varphi} \to \R,  \ \omega \mapsto \omega(e).$
 	Then ${\varphi}=\widetilde{\varphi} \circ \ga_1$. 
 	Clearly, $\widetilde{{\varphi}}G$ separates points of $G_{\varphi}$. These facts easily yield  
 	(see also \cite[Proposition 2.4]{GM1}) that the $G$-subalgebra $\A_1 \subset \RUC(G)$ of the $G$-compactification $\ga_1: G \to G_{\varphi}$ is the least Banach unital $G$-subalgebra in $\RUC(G)$ (with respect to the right action of $G$) which contains 
 	$\varphi G$.

 	Using the given coloring function $f: \T \to \Delta$ define 
 	$$f^+: \T_A \to \Delta, \ \ f^+(t)=f(t) \ \ \ \forall \ t \in [c_{i-1}^+,c_{i}^+).$$
 	This function is continuous because each $[c_i^+,c_{i+1}^+)$ is a clopen subset of $\T_A$. 
 	Note that 
 	$\varphi: G \to \Delta$ comes from $\ga_2: G \to \T_A$. Namely,
 	$\varphi=f^+ \circ \ga_2$.  
 	Therefore, the $G$-subalgebra $\A_2 \subset \RUC(G)$ of the compactification $\ga_2$ contains ${\varphi}$ and hence also $\A_1$ (by its minimality property mentioned before). 
 	So, there exists a quotient $G$-map $q: \T_A \to G_{\varphi}$ such that $q \circ \ga_2=\ga_1$. 
 	We have to show that $q$ is isomorphism, equivalently, that we have also the converse inclusion $\A_2 \subseteq \A_1$. 
 	By the basic properties of compactifications 
 	it suffices to show now that the $G$-orbit $f^+G$ of 
 	$f^+: \T_A \to \R$ separates the points of $\T_A$. 	
 	%
 	%
 	%
 	%
 	Let $x,y \in \T_A$ be distinct points. Consider the $G$-map $\nu: \T_A \to \T $. We have the following two cases: 
 	
 	\sk
 	
 	(a)  (twins) $\nu(x)=\nu(y)$. 
 	
 	So, $x=c_i^-, y=c_i^+$, or  $x=c^+, y=c^-$. Then $f^+(c_i^-)=f(c_{i-1}), f^+(c_i^+)=f(c_{i})$ or 
 	$f^+(c_i^+)=f(c_{i}), f^+(c_i^-)=f(c_{i-1})$.  Since $f$ is proper, 
 	in both cases we have $f^+(x) \neq f^+(y)$.
 	
 	\sk
 	
 	(b) (non-twins) $x_0:=\nu(x) \neq y_0:=\nu(y)$. 
 	
 	Then since $\a_1$ is irrational there exists $n \in \Z$ such that the points 
 	$x_0+n \a_1$ and $y_0 + n \a_1$ belong to different arcs in the given partition. 
 	Since $f$ is proper, it follows that the function $$f^+g: \T_A \to \Delta, \ t \mapsto f^+(g(t))$$ 
 	with $g:=(n_1,0, \cdots,0) \in \Z^k$ 
 	separates the points $x,y$.   	
 	
 	So, $\ga_1$ and $\ga_2$ are $G$-isomorphic. In particular, $G_{\varphi}$ is a tame $G$-system 
 	(being c-ordered, Theorem \ref{t:CoisWRN}). 
 	Finally, $\varphi \in \mathrm{Tame}(\Z^k)$ because ${\varphi}=\widetilde{\varphi} \circ \ga_1$.
 \end{proof}



\begin{remarks} \label{r:afterThm} \ 
	\ben 
	\item In particular, every Strurmian like rotation bisequence (for $k=1, d=1$) from Definition \ref{d:StCode} satisfies the conditions of Theorem \ref{t:multi}.  
	For the case $k=1$ this result gives a new proof of a result in \cite{Masui}. 
	One may get in this way c-ordered subshifts $X \subset \{0,1\}^{\Z}$ which are not Sturmian (with complexity greater than $p(n)=n+1$).  
	
	\item 
	At least for the case of $k=1$, the tameness of the corresponding symbolic $\Z$-systems coming from coding functions of Definition \ref{d:ColFunT} can be proved also by results of Pikula \cite{Pikula} and Aujogue \cite{Auj}.  
	\item 
	Theorem \ref{t:multi} can be modified for a slightly more general partitions 
	$\T=\cup_{i=0}^{d} I_i$, 
	where each $I_i$ is an 
	arc on $\T$ (open, closed or containing one of the boundary points). 
	
 \een 
\end{remarks}

\br

 \section{Minimal c-ordered $\Z$-systems} 
 \label{s:MinC-Ord}

 \begin{thm}\label{m-c}
 	Let $(X,\sigma)$ be a  circularly ordered topologically transitive cascade
 	with no isolated points. 
 	Then $(X,\sigma)$ is minimal and there exists an irrational $\alpha \in\R$ and an 
 	$R_\al$-invariant subset $A \subset \T = \R/\Z$ such that:
 	$(X,\sigma) \cong \Sp(\T,R_\al; A)$ (defined in Lemma \ref{c-doubling} and Remark \ref{r:add}). 
 \end{thm}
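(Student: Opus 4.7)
The plan is to recover $(X,\sigma)$ as the splitting of an irrational rotation by collapsing the \emph{gap pairs} of $X$ to produce a circle quotient. Declare $x \sim y$ whenever $x = y$ or $\{x,y\} = \{a,b\}$ with one of the open intervals $(a,b)_R$, $(b,a)_R$ empty. Asymmetry of the c-order ensures that distinct gap pairs are disjoint, so $\sim$ has equivalence classes of size at most two. Let $\pi: X \to Y$ be the quotient with $Y$ carrying the induced ternary relation; one verifies that $\pi$ is a closed continuous c-order preserving surjection, so $Y$ is a compact c-ordered space with no gap pairs.

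Since $\sigma$ preserves the c-order it preserves gap pairs, so it descends to a c-order preserving homeomorphism $\bar\sigma: Y \to Y$, and topological transitivity of $\sigma$ passes to $\bar\sigma$. Using that $X$ has no isolated points together with transitivity of $\sigma$, the quotient $Y$ also has no isolated points: an isolated point of $Y$ would pull back to a single point or a gap pair whose $\sigma$-orbit would be forced to be finite, contradicting transitivity on a perfect space. A compact c-ordered space without gaps admitting a transitive c-order preserving action must furthermore be connected: a clopen decomposition would split $Y$ into c-ordered ``arcs,'' forcing gaps between consecutive pieces. A compact connected c-ordered space is homeomorphic to $\T$, so $Y \cong \T$.

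Next, $\bar\sigma$ is a transitive orientation-preserving circle homeomorphism, so by the classical theory of circle maps it has irrational rotation number $\alpha$; after a c-order isomorphism of $\T$ we may assume $\bar\sigma = R_\alpha$. Define $A := \{t \in \T : |\pi^{-1}(t)| = 2\}$; equivariance of $\pi$ makes $A$ an $R_\alpha$-invariant subset of $\T$. By the uniqueness assertion in Lemma \ref{c-doubling}(3), the datum $(X,\pi)$ is canonically c-order isomorphic to $(\Sp(\T;A),\nu)$, and by Lemma \ref{c-doubling}(4a) this isomorphism is equivariant, giving $(X,\sigma) \cong \Sp(\T, R_\alpha; A)$. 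Since every point of $\T$ is a two-sided limit point, Lemma \ref{c-doubling}(4c) combined with minimality of the irrational rotation $R_\alpha$ yields minimality of $(X,\sigma)$.

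The main obstacle I anticipate is the passage to the circle quotient: verifying that $\sim$ is a closed relation on $X \times X$ (so $Y$ is Hausdorff), and that after collapsing gap pairs all gaps genuinely disappear while no new ones are born. Excluding a finite $Y$ (a rational rotation number would give periodic $\bar\sigma$-orbits lifting to finite $\sigma$-orbits in $X$) and ruling out disconnectedness of $Y$ both rest on the tight interplay between topological transitivity and absence of isolated points. Once $Y \cong \T$ is secured, the splitting description and the minimality conclusion are essentially dictated by Lemma \ref{c-doubling}, and the Markley-type picture referenced in the introduction falls out.
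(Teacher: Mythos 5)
Your proposal follows essentially the same route as the paper's proof: your relation $\sim$ is exactly the paper's equivalence relation $\Om$ of gap pairs, the quotient $Y=X/\Om$ is identified with the circle, Poincar\'{e}'s theorem conjugates the induced transitive homeomorphism to an irrational rotation $R_\al$, the set $A$ is defined as the set of doubled fibres, and the uniqueness clause of Lemma \ref{c-doubling}(3) identifies $(X,\sigma)$ with $\Sp(\T,R_\al;A)$. Indeed, your appeal to Lemma \ref{c-doubling}(4c) for minimality is somewhat more explicit than the paper's terse ``this fact implies that $(X,\sigma)$ is also minimal''.

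There is, however, one incorrectly justified step, and it occurs precisely where the no-isolated-points hypothesis must enter. You claim that \emph{asymmetry} of the circular order ensures that distinct gap pairs are disjoint, hence that $\sim$ has classes of size at most two. This is false as stated: in the finite cycle $C_n$ (where asymmetry of course holds) every adjacent pair is a gap pair and consecutive gap pairs share a point, so nothing in the c-order axioms alone forbids chains $(a,b)$, $(b,c)$ of gap pairs. The correct argument --- and it is the one-line observation the paper makes --- is that if $(a,b)_R=\emptyset$ and $(b,c)_R=\emptyset$ with $a\neq c$, then totality and transitivity of the c-order give $(a,c)_R=\{b\}$, so $b$ would be an isolated point of $X$, contradicting the hypothesis; this is what makes $\sim$ transitive. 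A smaller blemish: your ``finite orbit'' digression for excluding isolated points of $Y$ is unnecessary, since an isolated $y\in Y$ has an open fibre $\pi^{-1}(y)$ of cardinality at most two in the Hausdorff space $X$, which immediately produces an isolated point of $X$. With these repairs the remaining items you flag as obstacles (closedness of the relation, density of the quotient order once gaps are collapsed, connectedness and the topological characterization of $\T$ as a metrizable continuum disconnected by removing any two points) are exactly the verifications the paper carries out or glosses, and your outline matches its proof in substance.
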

 
 \begin{proof}
 	Let $C_X \subset X \times X \times X$ be the circular order on $X$.
 	Let 
 	$$
 	\Om = \{(x,x') \in X \times X : x =x', \ {\text or}\ 
 	(x,x')_C = \emptyset, 
 	\ {\text or}\ (x',x)_C = \emptyset \}.
 	$$
 	Note that if the pairs $(x, x')$ and $(x', x'')$ are in $\Omega$ then
 	necessarily $x'' = x$, as otherwise                                                                                                                                                                                   $x'$ would be an isolated point.
 	Now it is easy to check that $\Om$ is an ICER on $X$. 
 	
 	Let $Y = X/\Om$ be the quotient dynamical system and
 	let $\pi : X \to Y$ denote the quotient homomorphism.
 	We denote the corresponding transformation on $Y$ by $S$.
 	Thus $\pi(\sigma x) = S\pi(x)$ for every $x \in X$.
 	
 	It is easy to see that the circular order on $X$ induces a circular
 	order, $C_Y$, on $Y$ and that $\pi$ respects this ordering 
 	in the sense of Definition \ref{c-ordMaps}. 
 	
 	As $Y$ is compact it follows that $C_Y$ is a complete and
 	dense circular order. Since $Y$ is a minimal cascade it is in particular separable. 
 	This, in turn, implies that the countable collection of open
 	intervals defined via a dense countable subset forms
 	a countable basis for the topology on $Y$.
 	Therefore $Y$ (with its order topology)
 	is a connected metrizable compact space (a continuum).
 	Moreover, it has the property
 	that by omitting any two distinct points it becomes
 	disconnected. These properties characterize the circle 
 	$\T=\R / \Z$ (see, for example, \cite{Cech}).
 	
 	As $S : Y \to Y$ is topologically transitive it follows,
 	by a theorem of Poincar\'{e} \cite{Po}, that $S$ is conjugate to
 	an irrational rotation $R_\al : \T \to \T$, given by
 	$R_\al(y) = y + \al \pmod{1}$ for some irrational number $\al \in \R$.
 	We now identify $(Y,S) = (\T, R_\al)$. 
 	
 	It follows that $A$ is $R_\al$-invariant, where $A = \{y \in Y : |\pi^{-1}(y)| =2\}$,
 	and $\T \setminus A = \{y \in Y : |\pi^{-1}(y)| =1\}$.
 	In turn, this fact implies that $(X,\sigma)$ is also minimal. 
 	Moreover, by Lemma \ref{c-doubling}.3 we easily get that $(X,\sigma)$ is 
 	isomorphic to  $\Sp(\T,R_\al; A)$.	
 \end{proof}
 
\br 
 
 In \cite{Denj} Denjoy shows that a circle homeomorphism $\psi : \T \to \T$ is
 either minimal (in which case it is conjugate to a minimal rotation),
 or it admits a unique minimal set $X \subsetneq \T$ which is a Cantor set.
 In \cite{Ma} Markley gives a nice characterization of minimal cascades $(X,\sigma)$
 which can be embedded into a circle homeomorphism system $(\T, \psi)$.
 
 In the following theorem $\Delta =\{(x,x) : x \in X\}$ is the diagonal subset
 of $X \times X$, $P \subset X \times X$ is the proximal relation,
 $P'$ is the set of accumulation points of $P$, and $D \subset X \times X$
 is the {\em distal structure relation} of the system $(X,\sigma)$; i.e.
 $D$ is the smallest ICER such that $X/D$ is distal (see \cite{EG}).

 \begin{thm}[Markley]
 	Let $(X, \sigma)$ be a minimal cascade, where $X$ is a compact Hausdorff space. 
 	Then $(X, \sigma)$ can be imbedded in some $(\T, \psi)$, where $\psi$ has no periodic points,
 	if and only if $P' \subset \Delta$, $P \setminus \Delta$ is countable,
 	and $X/D$ is homeomorphic to $\T$.
 \end{thm}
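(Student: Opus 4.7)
The plan is to prove the two directions separately, using Denjoy's dichotomy for aperiodic circle homeomorphisms in one direction and the splitting construction $\Sp(\T,R_\al;A)$ of Lemma \ref{c-doubling} together with a Denjoy-style blow-up in the other.

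For the ``only if'' direction, assume $(X,\sigma)$ embeds as a minimal set of some $(\T,\psi)$ with $\psi$ aperiodic. By Denjoy's theorem, either $X=\T$ and $\psi$ is topologically conjugate to an irrational rotation, or $X$ is a Cantor subset of $\T$ whose complement is a countable union of open ``gap'' intervals with endpoints in $X$. In the first case the system is equicontinuous, hence distal, so $P=\Delta$, $X/D=X\cong \T$, and the conclusion holds trivially. In the Cantor case, I would show that two distinct points $x,x'\in X$ are proximal if and only if they are the two endpoints of a common gap: the forward implication follows because diameters of $\psi^n$-iterates of a proper gap tend to $0$, and the reverse because outside the gap pairs points are uniformly separated. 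This bijects $P\setminus\Delta$ with the countable family of gaps and forces the accumulation points of $P$ to lie on $\Delta$ (since gap diameters tend to $0$ along any orbit). Finally, identifying each gap-endpoint pair produces a minimal continuous quotient homeomorphic to $\T$, which one checks agrees with $X/D$ by the minimality property of the distal structure relation.

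For the ``if'' direction, let $\pi:X\to Y:=X/D$ be the quotient. By assumption $Y\cong \T$; the induced cascade $(Y,T)$ is minimal and distal, and $T$ admits no periodic point (otherwise pulling back via $\pi$ would produce a proper closed invariant subset of $X$, contradicting minimality). Poincar\'e's classification then yields $T\cong R_\al$ for some irrational $\al$. Set $A:=\{y\in\T:|\pi^{-1}(y)|\ge 2\}$; the hypotheses $P\setminus\Delta$ countable and $P'\subset\Delta$ force each fiber of $\pi$ to have at most two points (any larger fiber would produce an accumulation of proximal pairs off $\Delta$), and make $A$ a countable $R_\al$-invariant set. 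Pulling back the circular order from $\T$ and invoking the uniqueness clause of Lemma \ref{c-doubling}.3 identifies $(X,\sigma)$ with $\Sp(\T,R_\al;A)$.

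It then remains to realize this system inside a circle homeomorphism. I would perform a Denjoy blow-up: choose positive reals $\ell_a$, $a\in A$, summable over $A$ and distributed consistently along each $R_\al$-orbit, and replace each $a\in A$ by a closed interval $I_a$ of length $\ell_a$. The resulting space, after rescaling, is a topological circle $\hat\T$, into which $X$ embeds by sending each twin pair $(a^-,a^+)$ to the endpoints of $I_a$. Defining $\psi:\hat\T\to\hat\T$ to act as rotation by $\al$ outside $\bigcup_a I_a$ and as the orientation-preserving affine map $I_a\to I_{a+\al}$ on each inserted interval yields an aperiodic orientation-preserving homeomorphism of $\hat\T$ whose unique minimal set is precisely the image of $X$. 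The main obstacle is this last blow-up step: one must choose the $\ell_a$ so that the total length along every $R_\al$-orbit is finite and the affine gluings between corresponding intervals assemble into a genuine circle homeomorphism; this is essentially the classical Denjoy construction extended from a single orbit to the countably many orbits making up $A$.
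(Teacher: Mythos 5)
Note first that the paper does not prove this statement at all: it is quoted verbatim from Markley \cite{Ma} as a known result, so your proposal can only be measured against Markley's argument and against the adjacent machinery the paper does develop (Theorem \ref{m-c}, Lemma \ref{c-doubling}, and the proof of Theorem \ref{t:TFAE}). Your ``only if'' direction is essentially sound and follows the classical route: aperiodicity forces $\psi$ to preserve orientation, Denjoy's dichotomy \cite{Denj} applies, the complementary gaps of the Cantor minimal set are wandering intervals with summable lengths (so gap-endpoint pairs are asymptotic, hence proximal), and the monotone Poincar\'e semiconjugacy onto the irrational rotation shows conversely that every proximal pair lies in a common fiber; this identifies $P\setminus\Delta$ with the countable set of gap pairs, gives $P'\subset\Delta$ since only finitely many gaps exceed any $\eps>0$, and exhibits $X/D\cong\T$ once one notes that any distal factor must collapse $P$. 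Your closing blow-up step in the other direction is also fine and coincides with the paper's own argument for (4)$\Rightarrow$(2) in Theorem \ref{t:TFAE}; moreover no ``consistent distribution along orbits'' is needed --- any summable family $\{\ell_a\}_{a\in A}$ of positive lengths works, since continuity of the affinely extended map only requires that lengths of distinct inserted intervals tend to $0$.

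The genuine gap sits at the crux of the ``if'' direction: passing from the three hypotheses to $(X,\sigma)\cong \Sp(\T,R_\al;A)$. First, your claim that a fiber of $\pi:X\to X/D$ with three points ``would produce an accumulation of proximal pairs off $\Delta$'' tacitly assumes that points of a common $D$-fiber are pairwise proximal, i.e.\ that the maximal distal factor map is a proximal extension; this is not given ($D$ is only the smallest ICER with distal quotient, and is built from $P$ by iterated closure, so a priori it may contain distal pairs), and indeed the paper records ``$P=D$ and $|P[x]|\le 2$'' as a \emph{consequence} of Markley's proof, not as an input. Even granting pairwise proximality, one three-point fiber contributes only finitely many off-diagonal proximal pairs and creates no accumulation point of $P$, so the contradiction with $P'\subset\Delta$ does not arise in the way you state it. Second, ``pulling back the circular order from $\T$'' is not an operation through a two-to-one map: a compatible $\sigma$-invariant circular order on $X$ must be \emph{constructed} by orienting each doubled fiber, and Lemma \ref{c-doubling}.3 cannot be invoked beforehand, since its hypotheses require $M$ to already be c-ordered and $\gamma$ to be COP --- precisely the missing data. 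The actual content of Markley's argument at this point is to use $P'\subset\Delta$ together with countability of $P\setminus\Delta$ to show that each point of a multi-point fiber is a one-sided limit of nearby fibers; since a circle point has only two sides, this simultaneously bounds fibers by two, yields $P=D$, and orients the fibers, after which the identification with $\Sp(\T,R_\al;A)$ and your Denjoy blow-up legitimately finish the proof. As written, your sketch replaces this central step by two assertions that do not hold for the reasons given.
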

 
 It also follows from his proof that when $(X,\sigma)$ satisfies the assumptions of the theorem
 then $P = D$ and the cardinality of $P[x]$ is one or two for each $x \in X$. Thus the 
 homomorphism $\pi : X \to X/D =\T$ is either an isomorphism or 
 it is one-to-one on the complement of the preimage of a countable set in $\T$, and two-to-one
 on the preimage of that set.
 Every such cascade, being a compact system topologically embeddable in $(\T,\psi)$, inherits a topologically compatible circular order. 
%
\br

 Combining these results we get:
 
 \begin{thm} \label{t:TFAE}
 	Let $(X,\sigma)$ be a minimal cascade with $X$ compact Hausdorff.
 	The following conditions are equivalent.
 	\begin{enumerate}
 		\item
 		$(X,\sigma)$ is an infinite, circularly ordered, minimal cascade
 		and $X$ is second countable.
 		\item
 		$(X, \sigma)$ can be imbedded in some $(\T, \psi)$, 
 		where $\psi$ is a homeomorphism of $\T$ which has no periodic points.
 		\item
 		$(X,\sigma)$ satisfies Markley's conditions (namely,  $P' \subset \Delta$, $P \setminus \Delta$ is countable,
 		and $X/D$ is homeomorphic to $\T$).
 		\item
 		There exists an irrational number $\al \in \R$ such that 
 		$(X,\sigma) \cong \Sp(\T,R_\al; A)$ for some countable, $R_\al$-invariant subset $A \subset \T$.
 	\end{enumerate}
 \end{thm}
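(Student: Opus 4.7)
The plan is to close the cycle $(1) \Rightarrow (4) \Rightarrow (2) \Rightarrow (3)$, with Markley's theorem as quoted supplying $(3) \Rightarrow (2)$ and a short direct argument supplying $(2) \Rightarrow (1)$; Theorem \ref{m-c} will do the heavy lifting for the first arrow.

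For $(1) \Rightarrow (4)$, I first observe that an infinite minimal cascade on a compact Hausdorff space has no isolated points, for an isolated point would have a finite orbit by minimality. Hence Theorem \ref{m-c} applies and produces an irrational $\alpha$ with $(X,\sigma) \cong \Sp(\T, R_\alpha; A)$ for some $R_\alpha$-invariant $A \subset \T$. Second countability of $X_A = \Sp(\T, A)$ then forces $|A| \le \aleph_0$: arcs of the form $[a^+, b^-]$ with $a,b \in A$ distinct are clopen and pairwise distinct, so an uncountable $A$ would produce uncountably many clopen sets, which is impossible in a second countable compact Hausdorff space.

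For $(4) \Rightarrow (2)$, I will carry out the classical Denjoy blow-up. Enumerate $A = \{a_n\}$, choose lengths $\ell_n > 0$ with $\sum \ell_n < \infty$, and insert at each $a_n \in \T$ a closed arc $I_n$ of length $\ell_n$; after rescaling the result is again a circle $\T'$. Define $\psi : \T' \to \T'$ to agree with $R_\alpha$ off the inserted arcs and to carry $I_n$ affinely onto $I_m$, where $a_m = a_n + \alpha \pmod{1}$. The complement $K \subset \T'$ of the interiors of the $I_n$ is a closed $\psi$-invariant set, and by Lemma \ref{c-doubling}(3) it is canonically c-order isomorphic (as a $\Z$-system) to $\Sp(\T, R_\alpha; A)$, giving the required embedding.

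For $(2) \Rightarrow (3)$, given $X \subset \T$ minimal for $\psi \in \Homeo(\T)$ without periodic points, Denjoy's theorem supplies an irrational rotation number $\alpha$ and a monotone continuous semi-conjugacy $h : \T \to \T$ with $h \circ \psi = R_\alpha \circ h$. Restriction yields a factor map $\pi = h|_X : X \to \T$; since $(\T, R_\alpha)$ is equicontinuous (hence distal) and $D$ is the smallest ICER with distal quotient, this identifies $X/D$ with $\T$. The non-singleton fibers of $\pi$ correspond to the (countably many) removed arcs, so $P \setminus \Delta$ is countable; summability of the arc lengths forces their diameters to tend to zero, whence $P' \subset \Delta$. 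The reverse implication $(3) \Rightarrow (2)$ is precisely the content of Markley's theorem as stated. Finally, $(2) \Rightarrow (1)$ is immediate: the c-order on $\T$ restricts to $X$; second countability of $X$ is inherited from $\T$; $X$ is infinite since it surjects onto $X/D \cong \T$; and a circle homeomorphism without periodic points is necessarily orientation preserving (an orientation-reversing homeomorphism of $\T$ has exactly two fixed points), so $\psi|_X$ is COP.

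The main obstacle I foresee lies in $(2) \Rightarrow (3)$: one must verify that the abstract proximal and distal-structure relations on $X$ coincide with the fiber relation of the Denjoy semi-conjugacy (i.e.\ that $P = D$ in this setting) and that $P' \subset \Delta$ is a genuine consequence of the summability of the gap lengths. The rest of the argument amounts to standard manipulations with the Denjoy construction and the splitting-points machinery already developed in the paper.
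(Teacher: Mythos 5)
Your cycle $(1)\Rightarrow(4)\Rightarrow(2)\Rightarrow(3)$, together with Markley for $(3)\Rightarrow(2)$ and your direct $(2)\Rightarrow(1)$, is logically complete, and it genuinely differs from the paper's routing. The paper imports the full equivalence $(2)\Leftrightarrow(3)$ from Markley's theorem, extracts $(3)\Rightarrow(4)$ from the internals of Markley's proof, and declares $(1)\Leftrightarrow(4)$ to follow from Theorem \ref{m-c}; its $(4)\Rightarrow(2)$ is the same interpolation construction you present in Denjoy blow-up form. You instead use Markley only for $(3)\Rightarrow(2)$ and prove $(2)\Rightarrow(3)$ directly from the Poincar\'{e} semiconjugacy, which makes the argument self-contained modulo the statement (not the proof) of Markley's theorem. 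Your clopen-arc argument that second countability of $\Sp(\T,R_\al;A)$ forces $A$ to be countable is a genuine improvement in explicitness: Theorem \ref{m-c} says nothing about countability of $A$ or second countability, so the paper's one-line ``(1) and (4) are equivalent by Theorem \ref{m-c}'' silently needs exactly this argument (plus Lemma \ref{c-doubling}.2 for the converse direction, which your routing avoids needing).

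The obstacle you flag in $(2)\Rightarrow(3)$ is real but closable by standard arguments, and since you only name it you should be aware of what is actually missing. First, pairs in a fiber of $\pi=h|_X$ are proximal (indeed asymptotic): the arcs $\psi^n(J)$, $J$ the arc spanned by $h^{-1}(y)$, are pairwise disjoint, so their lengths are summable and tend to $0$. Second, $P$ is contained in the fiber relation of $\pi$, since proximal pairs map to proximal pairs of the distal system $(\T,R_\al)$, hence to diagonal pairs; combining with $P\subset D\subset R_\pi$ (the latter because $(\T,R_\al)$ is distal and $D$ is the smallest such ICER) gives $P=D=R_\pi$ and $X/D\cong\T$ simultaneously. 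Third, and this is the step your sketch does not supply at all, the countability of $P\setminus\Delta$ requires knowing each fiber meets $X$ in at most two points (or at least countably many); a priori $h^{-1}(y)\cap X$ could be an uncountable compact set inside the arc. The fix uses minimality: if $u<w<v$ all lie in $X\cap h^{-1}(y)$, then $\psi^n(u)$ lies in $\psi^n(J)$, which for $n\neq 0$ is disjoint from $J$, so the orbit of $u$ stays at circular distance at least $\min(d(w,u),d(w,v))$ from the interior point $w$, contradicting density of that orbit in $X$. Two smaller omissions: in $(4)\Rightarrow(2)$ you should check that the constructed $\psi$ has no periodic points (immediate, since the collapsing map semiconjugates $\psi$ to $R_\al$ with $\al$ irrational); and in $(2)\Rightarrow(1)$ the claim that the restricted c-order induces the subspace topology on $X$ is not automatic for subspaces --- it holds here because $X$ is compact, the interval topology of the restricted order is Hausdorff and coarser than the subspace topology, and one applies the same compact-to-Hausdorff comparison used in Proposition \ref{inclusion}. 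Your observation that $\psi$ is necessarily orientation preserving (hence COP) is a point the paper does not spell out, and is worth keeping.
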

 
 \begin{proof}
 	Markley's theorem asserts that (2) and (3) are equivalent.
 	From his proof it follows that (3) implies (4). 
 	It follows from Theorem \ref{m-c} that (1) and (4) are equivalent.
 	Finally, interpolating an interval between any pair 
 	of the countable collection $\{(a^+, a^-)\ : a \in A\}$ of split points in
 	$\Sp(\T, R_\al; A)$,  it is easy to see that the resulting space is
 	a circle $\T$. Then one can define a homeomorphism $\psi : \T \to \T$,
 	in such a way that the corresponding inclusion map  $i : X \to \T$
 	is an embedding of dynamical systems $i : (X,\sigma) \to (\T, \psi)$;
 	thus, showing that (4) implies (2).
 \end{proof}

 \begin{cor}
 	\begin{enumerate}
 		\item
 		To every 
 		infinite 
 		minimal circularly ordered cascade $(X,\sigma)$ corresponds a unique 
 		(irrational) rotation number $\al \in (0,1)$. 
 		\item
 		Given two minimal circularly ordered cascades $(X,T)$ and $(Y,S)$,
 		they either have rationally independent rotation numbers, in which case they are disjoint;
 		or their rotation numbers are rationally dependent and in this case
 		they admit a common extension of the form
 		$\Sp(\T, R_\al; A)$.
 	\end{enumerate}
 \end{cor}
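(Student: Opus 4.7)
Both statements will be derived from Theorem \ref{t:TFAE}.4, which canonically realizes every infinite minimal c-ordered metric cascade as $\Sp(\T, R_\al; A)$.

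\textbf{Part (1).} By Theorem \ref{t:TFAE}, $(X,\sigma) \cong \Sp(\T, R_\al; A)$ for some irrational $\al$ and some $R_\al$-invariant countable $A \subset \T$. The key observation is that the ICER $\Omega$ introduced at the start of the proof of Theorem \ref{m-c} depends only on the c-order on $X$ (it is the relation consisting of the diagonal together with those pairs that cut off an empty open interval). Hence the quotient cascade $X/\Omega$ is canonically attached to $(X, \sigma)$, and via Poincar\'e's theorem it is isomorphic to a minimal rotation $(\T, R_\al)$. Any c-order preserving, hence orientation preserving, conjugacy between $R_\al$ and $R_\beta$ on $\T$ forces $\al \equiv \beta \pmod 1$, so $\al \in (0,1)$ is unique.

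\textbf{Part (2), disjointness.} Write $X = \Sp(\T, R_\al; A)$, $Y = \Sp(\T, R_\beta; B)$ with factor maps $\nu_X, \nu_Y$, and assume $1, \al, \beta$ are $\Q$-linearly independent. Then the product rotation $R_\al \times R_\beta$ is minimal on $\T \times \T$. Let $M \subset X \times Y$ be any minimal subsystem; since disjointness of minimal systems is equivalent to minimality of the product, it suffices to show $M = X \times Y$. The image $(\nu_X \times \nu_Y)(M)$ is a closed invariant subset of $\T^2$, hence all of $\T^2$. Over any point of the dense set $\T^2 \setminus ((A \times \T) \cup (\T \times B))$ the map $\nu_X \times \nu_Y$ has a singleton fiber, so these singleton fibers all lie in $M$. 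The basis description of Lemma \ref{c-doubling}.2 shows that each split point $a^{\pm} \in X$ is a limit of points of $\T \setminus A$ from the appropriate side (and likewise for $Y$), so the preimage of this dense set is dense in $X \times Y$; closedness of $M$ then gives $M = X \times Y$.

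\textbf{Part (2), common extension, and main obstacle.} If $1, \al, \beta$ are $\Q$-dependent, choose coprime $p, q$ with $p \al + q \beta \equiv 0 \pmod 1$. The closed subgroup $H = \{(x,y) \in \T^2 : p x + q y = 0\} \subset \T^2$ is a topological circle on which $R_\al \times R_\beta$ acts as an irrational rotation $R_\gamma$ under a canonical identification $H \cong \T$, and the coordinate projections $\pi_1, \pi_2 : H \to \T$ are $\Z$-equivariant orientation preserving (hence c-order preserving) covering maps. Setting $C := \pi_1^{-1}(A) \cup \pi_2^{-1}(B) \subset H$, which is an $R_\gamma$-invariant countable set consisting of two-sided limit points in $H$ (so $\Sp(H;C)$ is minimal by Lemma \ref{c-doubling}.4(c)), form
$$
W := \Sp(H, R_\gamma; C) \cong \Sp(\T, R_\gamma; C')
$$
for a suitable $C' \subset \T$. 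Functoriality of the splitting construction, combined with the uniqueness statement of Lemma \ref{c-doubling}.3 applied to the factor maps induced by $\pi_1$ and $\pi_2$, yields $\Z$-equivariant c-order preserving surjections $W \to X$ and $W \to Y$: twin pairs of $W$ over $\pi_i$-preimages of split points map to the corresponding twins in $X$ or $Y$, while twin pairs coming only from the opposite factor collapse to a single point. The main technical obstacles are (i) this last compatibility check in the dependent case (well-definedness, continuity, and preservation of the c-order of the explicit map $W \to X$), and (ii) the density transfer in the independent case from the singleton-fiber part of $M$ up to all of $X \times Y$, which relies on the one-sided neighborhood description at $a^{\pm}$ given in Lemma \ref{c-doubling}.2.
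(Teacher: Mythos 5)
The paper states this corollary without a written proof---it is offered as an immediate consequence of Theorems \ref{m-c} and \ref{t:TFAE}---so the only question is whether your argument actually works. Part (1) does: the ICER $\Om$ in the proof of Theorem \ref{m-c} is defined purely order-theoretically, the induced homeomorphism of the quotient circle preserves the induced circular order, and the Poincar\'{e} rotation number is invariant under orientation-preserving conjugacy. Your disjointness argument is also sound whenever $\T\setminus A$ and $\T\setminus B$ are nonempty (then dense, by invariance and minimality of the rotation), which covers the second countable setting of Theorem \ref{t:TFAE}; in the general case (e.g.\ $A=\T$, the Ellis double circle) there are no singleton fibers at all, but this is repairable: approach each split point from the appropriate side, i.e.\ choose base points $(t_n,s_n)\to(a,b)$ with $t_n\downarrow a$, $s_n\downarrow b$, and note that \emph{every} point of the fiber over $(t_n,s_n)$ then converges to $(a^+,b^+)$, so singleton fibers are never needed.

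The genuine gap is in the dependent case: the step ``choose coprime $p,q$ with $p\al+q\beta\equiv 0 \pmod 1$'' can fail. Take $\beta\equiv \tfrac12-\al \pmod 1$ with $\al$ irrational: any integer relation $p\al+q\beta\in\Z$ reads $(p-q)\al+q/2\in\Z$, which forces $p=q$ with $q$ even, so no coprime relation exists. Then your $H$ is disconnected (a union of $d$ circles permuted cyclically), the dynamics on it is a minimal rotation of $\T\times\Z_d$, which is not circularly orderable (by Theorem \ref{m-c} its maximal equicontinuous factor would have to be a circle, but the system is already equicontinuous and disconnected), and the construction collapses. Worse, in this example \emph{no} common extension of the form $\Sp(\T,R_\ga;C)$ exists: the twin fibers of $\nu\colon \Sp(\T,R_\ga;C)\to\T$ are proximal, so $(\T,R_\ga)$ is the maximal equicontinuous factor, and every equicontinuous factor of such a $W$ is a factor of a circle rotation, hence connected; yet any minimal common extension of $X=(\T,R_\al)$ and $Y=(\T,R_\beta)$ maps onto a minimal subset of $X\times Y$, which here is the disconnected two-circle rotation. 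So the dichotomy is really a trichotomy, and your construction proves the ``common extension'' clause only when a \emph{primitive} relation exists, equivalently when $\al\equiv m\ga$, $\beta\equiv n\ga\pmod 1$ for a single irrational $\ga$. Two further inaccuracies in the same passage: a covering $t\mapsto qt$ of degree $q>1$ is orientation preserving only locally and is \emph{not} COP (e.g.\ $t\mapsto 2t$ sends the cycle $(0,\,0.3,\,0.6)$ to $(0,\,0.6,\,0.2)$, which is not a cycle), and the induced maps $W\to X$ have fibers of cardinality up to $2q$, so Lemma \ref{c-doubling}.3, which requires a COP map with fibers of size at most two, does not apply to them as you invoke it.
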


 \subsection{Enveloping semigroups which are c-ordered} 
 \label{s:E}

 \br 
 
 We now turn to the study of the enveloping semigroup $E(\T,R_\al ; A)$ 
 of $\Sp(\T,R_\al; A)$. In \cite[Example 14.10]{GM1} we have shown that
 when $A$ is a single orbit of the system $(\T,R_\al)$, say $A = \{n \al : n \in \Z\}$, then
 $E= E(\T,R_\al ; A)$ can be identified with the 
 disjoint union 
 $ \T_{\T} \cup \{\sigma^n : n \in \Z\}$, where $(\T_{\T},\sigma)$ is Ellis' {\em double circle} cascade:
 $\T_{\T} = \{\beta^{\pm} : \beta \in \T = [0,1)\}$ and $\sigma \circ \beta^{\pm} = (\beta + \al)^{\pm}$. 
 $E$ becomes a circularly ordered cascade, where $E=\T_{\T} \cup \Z$ is a c-ordered subset of 
 the c-ordered lexicographic order $\T \times \{-,0,+\}$ (see Remark \ref{r:add}). 
 Under this definition for every $n \in \Z$ we have $[n\al^+, \sigma^n, n\al^-]$. 
 Since the interval $(n \alpha^+, n \alpha^-) \subset E$ contains only the single element $\sigma^{n}$ for every  $n \alpha \in G=\Z$ we get that every element of $G=j(G)$ is isolated in $E$. So, in this case $E =\T_{\T} \cup \Z$, where each point of $\Z$ is isolated in $E$.


 Since clearly every $\Sp(\T,R_\al; A)$ is a factor of $\T_{\T} \cup \Z$ we have the following.
 
 \begin{cor}
 	For every irrational $\al \in \R$ and every $R_\a$-invariant subset 
 	$A \subset \T$, the enveloping semigroup of $\Sp(\T, R_\al; A)$ is the
 	circularly ordered cascade $\T_{\T} \cup \Z$. It contains $\T_{\T}$ as its unique minimal left ideal; 
 	or equivalently as its unique minimal subset.
 \end{cor}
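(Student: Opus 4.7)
The plan is to bootstrap from the special case $A=\Z\al$ (computed in Example 14.10 of \cite{GM1} and recalled in the paragraph just before the statement) to an arbitrary $R_\al$-invariant $A\subseteq\T$, using the functoriality of the splitting construction established in Lemma \ref{c-doubling}.

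First I would reduce to the case $\Z\al\subseteq A$. Assuming $A$ is nonempty (the trivial case $A=\emptyset$ gives $(\T,R_\al)$, whose enveloping semigroup is just $\T$), $A$ contains some full orbit $a+\Z\al$; the rotation $R_{-a}$ then induces a $\Z$-equivariant isomorphism $\Sp(\T,R_\al;A)\cong\Sp(\T,R_\al;A-a)$, and we may replace $A$ by $A-a\supseteq\Z\al$. Lemma \ref{c-doubling}(4)(b) applied to the chain $\Z\al\subseteq A\subseteq\T$ then yields continuous onto $\Z$-maps
\[
\T_\T=\Sp(\T,R_\al;\T)\ \xrightarrow{\ \pi_1\ }\ \Sp(\T,R_\al;A)\ \xrightarrow{\ \pi_2\ }\ \Sp(\T,R_\al;\Z\al),
\]
which, since factor maps of compact dynamical systems induce surjective continuous right topological semigroup homomorphisms on enveloping semigroups, give surjections
\[
E(\T_\T)\ \twoheadrightarrow\ E(\Sp(\T,R_\al;A))\ \twoheadrightarrow\ E(\Sp(\T,R_\al;\Z\al))=\T_\T\cup\Z,
\]
the last identification being Example 14.10 of \cite{GM1}.

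Next I would show directly that $E(\T_\T)=\T_\T\cup\Z$ as well, by rerunning the same analysis one level up: for a sequence $n_k\in\Z$ with $n_k\al$ approaching some $\beta\in\T$ from above (respectively below), the shifts $\sigma^{n_k}\colon\T_\T\to\T_\T$ converge pointwise to the well-defined map $\sigma^{\beta^+}\colon x\mapsto(\nu(x)+\beta)^+$ (resp.\ $\sigma^{\beta^-}$); these parametrize a faithful copy of $\T_\T$ inside $E(\T_\T)$, while each $\sigma^n$ remains isolated because it is the only element of $E(\T_\T)$ that translates by $n\al$ and preserves the $\pm$-structure. The main obstacle is then to verify that the composition $E(\T_\T)\to E(\Sp(\T,R_\al;\Z\al))$ is the identity under the natural labelings $\sigma^n\mapsto\sigma^n$ and $\sigma^{\beta^\pm}\mapsto\sigma^{\beta^\pm}$. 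This requires a careful chase through the defining relation $p'\circ(\pi_2\circ\pi_1)=(\pi_2\circ\pi_1)\circ p$ evaluated on split points $s^\pm$, and the bookkeeping of $\pm$-labels across three nested c-ordered systems is the spot where errors are easiest to make. Once this composition is bijective, both surjections in the chain above must also be bijective, and we conclude that $E(\Sp(\T,R_\al;A))=\T_\T\cup\Z$.

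For the final claim about minimal left ideals in $E=\T_\T\cup\Z$, I would argue directly. The subset $\T_\T$ is a left ideal, since for any $q\in E$ and $p=\sigma^{\beta^\pm}\in\T_\T$ the composite $q\circ p$ is again of the form $\sigma^{\gamma^\pm}$ (the outer factor $q$ overwrites any residual $\pm$-ambiguity of the image of $p$). It is minimal because left-translation by $\sigma$ acts on $\T_\T$ as the irrational rotation $\sigma^{\beta^\pm}\mapsto\sigma^{(\beta+\al)^\pm}$, so every orbit is dense. For uniqueness, any minimal left ideal containing an isolated point $\sigma^n$ would, upon left-multiplication by the whole of $\T_\T$, have to absorb $\T_\T$, producing a proper closed subideal and contradicting minimality. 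Hence $\T_\T$ is the unique minimal left ideal, which (for a compact right topological semigroup, where minimal closed invariant subsets coincide with minimal left ideals) is also the unique minimal closed subsystem.
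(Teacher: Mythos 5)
Your proposal is correct, and it is a fully worked-out version of what the paper compresses into a single sentence: the paper's entire proof is the remark ``since clearly every $\Sp(\T,R_\al;A)$ is a factor of $\T_{\T}\cup\Z$ we have the following,'' i.e.\ a one-directional appeal to functoriality and the computed case $A=\Z\al$ of \cite[Example 14.10]{GM1}. What you do differently, and what it buys: (i) the paper's remark only yields a surjection $\T_\T\cup\Z\twoheadrightarrow E(\Sp(\T,R_\al;A))$, leaving the injectivity (hence the asserted equality) implicit, whereas your sandwich $E(\T_\T)\twoheadrightarrow E(\Sp(\T,R_\al;A))\twoheadrightarrow E(\Sp(\T,R_\al;\Z\al))$ with both ends identified with $\T_\T\cup\Z$ genuinely pins down the middle term; (ii) you correctly flag that the statement fails for $A=\emptyset$ (where $E=\T$), a degenerate case the paper's formulation silently excludes, and your translation trick reducing to $\Z\al\subseteq A$ is exactly the missing normalization needed to apply Lemma \ref{c-doubling}(4)(b); (iii) you actually prove the minimal-left-ideal assertions, which the paper states without argument, and your arguments are sound (for minimality of $\T_\T$, note that for $p\in L\subseteq\T_\T$ the set $Ep$ is a \emph{closed} left ideal containing the $\sigma$-orbit closure of $p$, which is all of $\T_\T$ by minimality of the double-circle cascade). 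Two small simplifications: the ``label chase'' you worry about is automatic, since the canonical homomorphism $E(X)\to E(Y)$ induced by a factor map sends $\lim \sigma^{n_k}$ to $\lim\sigma^{n_k}$ along the same net; as $\sigma^{\beta^{\pm}}$ is the limit of any net with $n_k\al\to\beta$ from the corresponding side in both the source and the target, the composition fixes all labels with no bookkeeping. Also, your stated reason that each $\sigma^n$ is isolated (uniqueness of the $\pm$-preserving translation) is algebraic rather than topological; the quick fix, given your classification of $E(\T_\T)$, is to evaluate at a split pair: the pointwise-open set $\{p: p(0^+)\in V^+,\ p(0^-)\in V^-\}$, with $V^{\pm}$ suitable one-sided basic neighborhoods of $(n\al)^{\pm}$, contains $\sigma^n$ and no other element, matching the paper's interval observation $(n\al^+,n\al^-)=\{\sigma^n\}$.
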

 
%
%


 \sk

 \begin{prop}
 	Every infinite, point transitive, circularly ordered cascade $(Y,S)$ is a factor of $\T_{\T} \cup \Z$ as above. 
 \end{prop}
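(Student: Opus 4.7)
The plan is to reduce the statement to Theorem~\ref{m-c} combined with the preceding corollary, by extracting an irrational rotation number $\al$ from $(Y,S)$ and then constructing an explicit c-order preserving continuous surjection $\Phi: \T_{\T} \cup \Z \to Y$. First, dispose of the no-isolated-points case: since point transitivity implies topological transitivity, Theorem~\ref{m-c} gives $(Y,S) \cong \Sp(\T, R_\al; A)$ directly, and the preceding corollary shows $\T_{\T} \cup \Z$ is its enveloping semigroup and in particular surjects onto it.

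So assume $Y$ has isolated points. The set $I$ of isolated points is open and $S$-invariant; since the transitive orbit $O = \{S^n y_0\}$ is dense, $y_0 \in I$ (otherwise the orbit lies in the proper closed set $Y \setminus I$) and in fact $I = O$ (any point of $I \setminus O$ would be an isolated point missed by the dense orbit). Thus $Y = L \sqcup O$ with $L$ closed and $S$-invariant. Next, I would run the empty-interval quotient from the proof of Theorem~\ref{m-c}, adapted: define $\Omega \subset Y^2$ by $y \Omega y'$ iff $y = y'$ or one of the open arcs $(y,y')_C$, $(y',y)_C$ is empty, and let $\tilde\Omega$ be its equivalence closure. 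Each isolated orbit point $S^n y_0$ lies in a basic arc containing it alone, so its $\tilde\Omega$-class is a triple consisting of $S^n y_0$ together with its left and right approximants $a_n, b_n \in L$; every other class is a singleton in $L$ or an empty-gap twin pair in $L$. The quotient $\bar Y = Y/\tilde\Omega$ is then a compact connected metric space carrying a dense c-order with the "removing two distinct points disconnects" property, hence $\bar Y \cong \T$; the induced transformation $\bar S$ is a topologically transitive homeomorphism of $\T$ with no periodic points, so by Poincaré's theorem $\bar S = R_\al$ for some irrational $\al$. This fixes the $\al$ to be used on the $\T_{\T} \cup \Z$ side.

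Finally, I would construct $\Phi : \T_{\T} \cup \Z \to Y$ by lifting the identity $\T \to \T$ through the two natural c-order preserving projections $q : \T_{\T} \cup \Z \to \T$ (with $\beta^\pm \mapsto \beta$ and $\sigma^n \mapsto n\al$) and $\pi : Y \to \bar Y = \T$. Fiberwise: if $\pi^{-1}(\beta) = \{y\}$, set $\Phi(\beta^\pm) = y$; if $\pi^{-1}(\beta) = \{a, b\} \subset L$ with $a$ immediately preceding $b$ in the c-order, set $\Phi(\beta^-) = a$ and $\Phi(\beta^+) = b$; if $\beta = n\al$ and $\pi^{-1}(\beta) = \{a_n, S^n y_0, b_n\}$, send the ordered triple $((n\al)^-, \sigma^n, (n\al)^+)$ to $(a_n, S^n y_0, b_n)$. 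By construction $\Phi$ is surjective, $S$-equivariant, and respects the c-order. The main obstacle, and the heart of the argument, is verifying continuity of $\Phi$ at points of the doubled circle $\T_{\T}$: one must show that the basic one-sided neighborhoods $(s^-, v) = \{s^+\} \cup \nu^{-1}(s,v)$ of $s^+$ in $\T_{\T} \cup \Z$ (from Claim~2 in Lemma~\ref{c-doubling}) map into basic c-order neighborhoods of $\Phi(s^+)$ in $Y$. This reduces to the observation that at every point of $L$ the dense orbit $O$ accumulates from both sides in the c-order, combined with the structure of the $\tilde\Omega$-classes; once continuity is established, the c-order preserving property follows automatically from the fiberwise definition, completing the proof.
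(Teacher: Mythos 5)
Your proposal is correct and follows essentially the same route as the paper's (sketched) proof: both arguments quotient by the equivalence relation generated by the empty-gap pairs, identify the quotient as a circle on which $S$ induces a minimal homeomorphism conjugate (via Poincar\'{e}) to an irrational rotation $R_\al$, recognize the isolated transitive-orbit points as the unique gap-fillers corresponding to the $\sigma^n$, and then glue fibers to obtain the factor map $\T_{\T} \cup \Z \to (Y,S)$. Your dichotomy on isolated points (rather than the paper's minimal/non-minimal split, which routes through the unique minimal set $Z \cong \Sp(\T,R_\al;A)$ and a proximal extension) and your explicit fiberwise definition of $\Phi$ with the continuity discussion are just a more detailed rendering of the same argument, at the same level of rigor as the paper's own admittedly ``straightforward sketch.''
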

 
 \begin{proof}
 	We sketch the straightforward proof. 
 	If $(Y,S)$ is minimal then by Theorem \ref{m-c} it is of the form $(\Sp(\T,R_\al; A)$ and we are done.
 	So we now assume that $(Y,S)$ is not minimal.
 	
 	Suppose $y_1, y_2, y_3$ are three distinct points
 	such that  $[y_1, y_2, y_3]$ and such that the intervals $(y_1, y_2)$
 	and $(y_2, y_3)$ are empty. Then $y_2$ is an isolated point and is therefore
 	necessarily on the unique orbit of transitive points.

 Let $\Om$ be 
 	 the smallest ICER which contains 
 	the pairs $(y,y') \in Y\times Y$
 	such that the interval $(y, y')$ 
 	 is empty.
 	It is not hard to see, using the remark above, that $Y/\Om$ is a circle on which $S$ 
 	induces a minimal homeomorphism.
 	Then $Y/\Om$ can be identified with $(\T, R_\al)$ for some irrational $\al$
 	and one checks that the quotient map $\pi : (Y,S) \to (\T,R_\al)$ is a proximal extension.
 	
 	This implies that $Y$ contains a unique minimal subset $Z \subset Y$,
 	and by Theorem \ref{m-c} $Z \cong \Sp(\T,R_\al, A)$ for some irrational $\al$
 	and an $R_\al$-invariant $A \subset \T$.
 	
 	Next one shows that for every $z \in A$ the corresponding open interval $(z^-, z^+)$
 	contains at most one point and moreover, there is a unique orbit $\{z + n \al : n \in \Z\} \subset A$
 	for which this happens; i.e. there are points $c_n$ with $[(z + n\al)^-, c_n , (z + \al)^+]$.
 	We  now identify $c_n$ with $S^n$. Finally, using the information we already have on
 	$(Y,S)$ it is clear how to define the factor map  $\T_{\T} \cup \Z \to (Y,S)$. 
 \end{proof}

 \subsection{General discrete group action}
 
 Let $G$ be an infinite countable group. 
 A $G$-system $(G,X)$ is {\em proximal} if for every pair $x, x' \in X$ and a every neighborhood $V$ in $X \times X$ of the diagonal 
 $\Del_X =\{(z, z) : z \in X\}$,
 there is $g \in G$ with $(gx, gx') \in V$.
 We say that  $(G,X)$ is {\em extremely proximal} if
 for every closed subset $A \subset X$ and every nonempty open 
 subset $U \subset X$ there exists an element $g \in G$
 with $g(A) \subset U$.
 Recall that the $G$-action on a metrizable $X$ is equicontinuous iff
 there is a compatible metric on $G$ with respect to which every 
 $g \in G$ acts as an isometry. We then say that the action is 
 {\em isometric}.
 
 We have the following neat dichotomy theorem of Malyutin \cite{Mal}:

 \begin{thm}
 	Every minimal system $(G,\mathbb{T})$, a continuous action of $G$ on the circle $\mathbb{T}$, 
 	is either conjugate to an isometric action on $\mathbb{T}$ or it is a finite to one extension of an extremely proximal action where the factor
 	map is a covering map.
 \end{thm}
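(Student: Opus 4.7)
The approach is a Margulis--Ghys style dichotomy based on the existence (or not) of a $G$-invariant Borel probability measure on $\mathbb{T}$. Suppose first that such a measure $\mu$ exists. Minimality of $(G,\mathbb{T})$ forces $\mu$ to be atomless and of full support. Then the cumulative distribution
$$
h : \mathbb{T} \to \mathbb{T}, \qquad h(x) := \mu([x_0,x]) \pmod{1}
$$
(for any fixed basepoint $x_0$) is an orientation-preserving homeomorphism conjugating the $G$-action to a subgroup of the rotation group of $\mathbb{T}$. Pulling back the rotation-invariant Riemannian metric through $h$ exhibits every $g \in G$ as an isometry, which yields the first alternative.

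Now suppose no $G$-invariant probability measure exists on $\mathbb{T}$. Fix a compatible metric $d$. Call a closed non-degenerate arc $I \subset \mathbb{T}$ \emph{shrinkable} if $\inf_{g \in G} \mathrm{diam}(gI) = 0$, and \emph{rigid} otherwise. Define an equivalence relation $\sim$ on $\mathbb{T}$ by declaring $x \sim y$ whenever $x$ and $y$ belong to a common rigid closed arc (together with $x \sim x$). A direct check shows that equivalence classes are closed arcs and that $\sim$ is a closed, $G$-invariant equivalence relation. Consequently, the quotient $Y := \mathbb{T}/{\sim}$ inherits a natural circular order from $\mathbb{T}$ and becomes a second countable continuum with the property that omitting any two distinct points disconnects it. By the characterization of the circle already invoked in the proof of Theorem \ref{m-c}, $Y$ is homeomorphic to $\mathbb{T}$, and the quotient map $\pi : \mathbb{T} \to Y$ is a continuous $G$-equivariant surjection of circles.

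It remains to verify the two properties asserted in the conclusion: that the induced action $(G,Y)$ is extremely proximal, and that $\pi$ is a finite-to-one covering map. Extreme proximality of $(G,Y)$ is immediate from the construction: every proper closed subset $F \subset Y$ lifts to a proper closed subset of $\mathbb{T}$ whose complement contains a shrinkable arc, so a suitable group element pushes $F$ into any prescribed nonempty open $U \subset Y$. For the finite-to-one covering claim, I would show that the family of maximal rigid arcs forms only finitely many $G$-orbits, each orbit consisting of finitely many arcs; once this is known, $\pi$ collapses each such arc to a single point, its fibers are of common finite cardinality by minimality, and the local structure near any point is that of a degree-$n$ quotient, so $\pi$ is a genuine covering.

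The principal obstacle is precisely this last finiteness step. Ruling out infinite families of rigid arcs in a single $G$-orbit is the technical core of the argument: if such an infinite family existed, one would either build a nontrivial $G$-invariant probability measure by a Krein--Milman-type averaging over atoms supported on endpoints of rigid arcs (violating the standing no-invariant-measure hypothesis), or extract a proper closed $G$-invariant subset of $\mathbb{T}$ from the accumulation points of the endpoints (violating minimality). Handling both possibilities simultaneously, and doing so uniformly across all orbits of rigid arcs, is the delicate part of Malyutin's original proof.
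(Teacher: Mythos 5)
The paper itself offers no proof of this statement: it is quoted verbatim as Malyutin's theorem with a citation to \cite{Mal}, so there is no internal argument to compare yours against. Judged on its own merits, your outline follows the classical Margulis--Ghys line (dichotomy on the existence of a $G$-invariant probability measure; conjugation to rotations via the distribution function of an atomless, fully supported measure; otherwise collapse of non-contractible arcs to get a proximal quotient), and the first half is essentially sound --- with the small caveat that if $G$ contains orientation-reversing elements, $h g h^{-1}$ is a reflection rather than a rotation, so the correct conclusion is ``subgroup of the isometry group $O(2)$,'' which is all the theorem claims anyway.

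The second half, however, has a genuine gap, and you concede it yourself: the finiteness of the fibers of $\pi$ and the covering-map structure are asserted but not proved, and the two escape routes you sketch do not work as stated. First, even before that, your relation $\sim$ needs the no-invariant-measure hypothesis at the point where you claim classes are \emph{proper} closed arcs: if every arc were rigid the action would be equicontinuous, the closure of $G$ in ${\mathrm{Homeo}}\,(\mathbb{T})$ compact, and Haar averaging would produce an invariant measure --- this is where the hypothesis actually enters, and your sketch never invokes it there. Second, and more seriously, ``Krein--Milman-type averaging over atoms at endpoints of rigid arcs'' and ``accumulation points give a proper invariant set'' are not a proof of finiteness; the known mechanism is different and is the technical heart of the theorem. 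One shows that for each $x$ the set of points $y$ with $[x,y]$ non-contractible is finite of constant cardinality $k$, and that the map sending $x$ to the next such point (in the circular order) is a homeomorphism of $\mathbb{T}$ \emph{commuting} with the $G$-action; any such commuting homeomorphism acts freely (its fixed-point set is closed and $G$-invariant, hence empty or all of $\mathbb{T}$ by minimality), so it generates a finite cyclic group $\mathbb{Z}/k$, and $\pi$ is exactly the quotient by this group --- which is what makes $\pi$ a $k$-to-one covering rather than merely a finite-to-one quotient. Without constructing this commuting finite-order symmetry, neither the constant fiber cardinality nor the local homeomorphism property of $\pi$ follows, and extreme proximality of $(G,Y)$ (which you call ``immediate'') also still requires checking that \emph{every} proper closed arc of $Y$ is contractible and that minimality lets you steer the contracted image into a prescribed open set.
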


 It is easy to see that a minimal proximal $G$-action of an abelian group
 $G$ is necessarily trivial. Thus it follows that every minimal action of abelian $G$ on $\mathbb{T}$ is isometric. An example of 
 a minimal proximal action is provided by the natural action of
 $SL(2,\Z)$ on the projective line $\mathbb{P}^1$, the collection
 of lines through the origin in $\R^2$
 (which is homeomorphic to $\mathbb{T}$).
 The action of $SL(2,\Z)$ on the space of rays emanating
 from the origin, 
 which is again homeomorphic to $\mathbb{T}$,
 is an example of a two-to-one covering of a proximal action.

 \begin{thm}\label{m-c2}
 	Let $(G,X)$ be an infinite circularly ordered minimal $G$-system.
 	Then there exists a minimal 
 	$G$-action on $\mathbb{T}$ and a 
 	$G$-invariant subset $A \subset \T$ such that:
 	$(G,X) \cong \Sp(\T,G; A)$. 
 \end{thm}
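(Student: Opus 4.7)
My plan is to closely follow the proof of Theorem \ref{m-c}, with the only substantive change being the removal of the step that invokes Poincar\'e's theorem (which is specific to $\Z$-actions). First, since $(G,X)$ is infinite and minimal with $G$ countable discrete, $X$ has no isolated points: an isolated orbit point would, by minimality, force every point of $X$ to be isolated, hence $X$ to be finite by compactness.

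Next, I would define
$$
\Omega = \{(x,x') \in X \times X \ :\  x=x',\ \text{ or }\ (x,x')_C = \emptyset,\ \text{ or }\ (x',x)_C = \emptyset\},
$$
and verify, exactly as in the proof of Theorem \ref{m-c}, that $\Omega$ is a closed equivalence relation; the absence of isolated points is what rules out pathological chains and guarantees transitivity. The relation $\Omega$ is $G$-invariant because every $g \in G$ is a c-order preserving homeomorphism, and these preserve both the diagonal and the property of bounding an empty interval. Let $\pi : X \to Y := X/\Omega$ be the corresponding equivariant quotient; the circular order on $X$ descends to a circular order $C_Y$ on $Y$ which is both dense (by the very definition of $\Omega$) and complete (by compactness), and with respect to which $\pi$ and the induced minimal $G$-action on $Y$ are c-order preserving.

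Then I would identify $Y$ with $\T$. Since $G$ is countable and $(G,X)$ is minimal, each $G$-orbit in $X$ is countable and dense, so $X$ and hence $Y$ are separable. A dense countable subset of $Y$ yields a countable base of open intervals, so $Y$ is second countable and metrizable. As in the proof of Theorem \ref{m-c}, $Y$ is therefore a compact connected metric continuum in which the removal of any two distinct points disconnects it, and the classical topological characterization cited in \cite{Cech} identifies $Y$ with $\T$. This produces the required minimal $G$-action on $\T$; crucially, in contrast to the cascade case, we are \emph{not} claiming conjugacy to a rotation, so Poincar\'e's theorem is not needed and arbitrary countable $G$ is allowed.

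Finally, I would set $A = \{y \in \T : |\pi^{-1}(y)| = 2\}$, which is $G$-invariant since $\pi$ is a $G$-map; and by the construction of $\Omega$ we have $|\pi^{-1}(y)| = 1$ for every $y \notin A$. The uniqueness clause of Lemma \ref{c-doubling}.3 applied to the c-order preserving surjection $\pi : X \to \T$ provides a canonical c-order isomorphism $X \cong \T_A$, and the equivariance from Lemma \ref{c-doubling}.4 upgrades this to the desired $G$-isomorphism $(G,X) \cong \Sp(\T, G; A)$. The main obstacle I expect is the careful verification that the quotient topology on $Y$ agrees with the order topology of $C_Y$, so that $Y$ is genuinely a COTS; but this is handled by exactly the same argument used in the proof of Theorem \ref{m-c}, once the no-isolated-points hypothesis is in place.
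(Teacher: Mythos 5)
Your proposal is correct and takes essentially the same route as the paper, whose entire proof of this theorem reads ``Same as the (relevant parts of the) proof of Theorem \ref{m-c}.'' Your write-up is precisely that intended adaptation---supplying the no-isolated-points step from infiniteness plus minimality, dropping the Poincar\'e/rotation-number step that is specific to cascades, and invoking Lemma \ref{c-doubling} to identify $(G,X)$ with $\Sp(\T,G;A)$---so it in fact fills in details the paper leaves implicit.
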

 
 \begin{proof}
 	Same as the (relevant parts of the) proof of Theorem \ref{m-c}.
 \end{proof}

 \bibliographystyle{amsplain}

\begin{thebibliography}{10}
	

	
	\bibitem{AH}
	E. Akin, K. Hrbacek, 
	\emph{Complete homogeneous LOTS}, Topology Proc. 26 (2001), 367-406. 
	
	\bibitem{Auj}
	J. B. Aujogue,
	\emph{Ellis enveloping semigroup for almost canonical model sets}, ArXiv:1305.0879, May, 2014.
	
	
	
%
	
	\bibitem{BFT} J. Bourgain, D.H. Fremlin and M. Talagrand,
	{\it Pointwise compact sets in Baire-measurable functions}, Amer.
	J. of Math., \textbf{100:4} (1977), 845-886.
	
	
	
	
	
	\bibitem{BFZ} 
	V. Berthe, S. Ferenczi, L.Q. Zamboni,
	\emph{Interactions between Dynamics, Arithmetics and
		Combinatorics: the Good, the Bad, and the Ugly}, Contemporary Math., {\bf 385} (2005), 333-364.
	
	
	
	
	\bibitem{Cech}
	E. \v{C}ech, \emph{Point Sets}, Academia, Prague, 1969.
	
	\bibitem{CJ}
	S. \v{C}ernak, J. \v{J}akub\v{i}k, \emph{Completion of a cyclically ordered group}, Czech. Math. J., 37 (1987), 157-174. 
	
	
	
\bibitem{Ch-Si}
A. Chernikov and P. Simon, 
{\em Definably amenable NIP groups},
arXiv:1502.04365v1 [math.LO] 15 Feb 2015.

	
	\bibitem{Denj}
	A. Denjoy,
	{\em Sur les courbes d\'{e}finies par les \'{e}quations diff\'{e}rentielles \`{a} la surface du tore}, 
	J. de Math. {\bf 11}, (1932),  333--75.
	
	\bibitem{Dulst}
	D. van Dulst, \emph{Characterizations of Banach spaces not containing $l^1$}.
	Centrum voor Wiskunde en Informatica, Amsterdam, 1989.
	
	\bibitem{Ellis}
	R. Ellis,
	{\em Lectures on Topological Dynamics\/},
	W. A. Benjamin, Inc.\ , New York, 1969.
	
	

	
	\bibitem{EG}
	R. Ellis and W. H. Gottschalk, 
	{\em Homomorphisms of transformation groups}, 
	Trans. Amer. Math. Soc. {\bf 94}, (1960), 258--71.
	
	\bibitem{Fernique}
	T. Fernique, \emph{Multi-dimensional Sturmian sequences and generalized substitutions},
	Int. J. Found. Comput. Sci., \textbf{17} (2006), pp. 575-600.
	
	
	
	
	
	\bibitem{Gl-book1}
	E. Glasner, \emph{Proximal flows,} Lect. Notes, 517, Springer, 1976.
	
	\bibitem{Gl-tame}
	E. Glasner, {\it On tame dynamical systems}, Colloq. Math.
	\textbf{105} (2006), 283-295.
	
	\bibitem{Gl-str} E. Glasner, {\it The structure of tame minimal
		dynamical systems}, Ergod. Th. and Dynam. Sys. {\bf 27} (2007),
	1819--1837.
	
	
	\bibitem{GM1}
	E. Glasner, M. Megrelishvili, \emph{Linear representations of
		hereditarily non-sensitive dynamical systems}, Colloq. Math.,
	\textbf{104} (2006), no. 2, 223-283.
	
	\bibitem{GM-suc}
	E. Glasner and M. Megrelishvili, {\it New algebras of functions on
		topological groups arising from $G$-spaces}, Fundamenta Math., \textbf{201}
	(2008), 1-51.
	
	
	
	\bibitem{GM-rose}
	E. Glasner, M. Megrelishvili, {\it Representations of dynamical
		systems on Banach spaces not containing $l_1$},
	Trans. Amer. Math. Soc.,  \textbf{364} (2012),  6395-6424.
	ArXiv e-print: 0803.2320.
	
	\bibitem{GM-AffComp}
	E. Glasner, M. Megrelishvili,
	\emph{Banach representations and affine compactifications of dynamical systems},
	in: Fields institute proceedings dedicated to the 2010 thematic program on asymptotic geometric analysis,
	M. Ludwig, V.D. Milman, V. Pestov, N. Tomczak-Jaegermann (Editors), Springer, New-York, 2013. 
	ArXiv version: 1204.0432.
	
	\bibitem{GM-survey}
	E. Glasner, M. Megrelishvili,
	\emph{Representations of dynamical systems on Banach spaces,}
	in: Recent Progress in General Topology III, 
	(Eds.: K.P. Hart, J. van Mill, P. Simon),  
	Springer-Verlag, Atlantis Press, 2014, 399-470. 
	
	\bibitem{GM-tame}
	E. Glasner, M. Megrelishvili, 
	\emph{Eventual nonsensitivity and tame dynamical systems}, arXiv:1405.2588, 2014. 
	
	
	\bibitem{GMU} E. Glasner, M. Megrelishvili and V.V. Uspenskij,
	\emph{On metrizable enveloping semigroups}, Israel J. of Math.
	\textbf{164} (2008), 317-332.
	
%
%
%
%
	
	
	\bibitem{H}
	W. Huang, {\em Tame systems and scrambled pairs under an abelian
		group action\/}, Ergod. Th. Dynam. Sys. {\bf 26} (2006),
	1549--1567.
	
	
	\bibitem{Hunt}
	E.V. Huntington, \emph{Sets of completely independent postulates for cyclic order}, Proc. Nat. Acad. Sci. USA 10 (1924). 74-78. 
	
	\bibitem{Ibar}
	T. Ibarlucia,
	\emph{The dynamical hierarchy for Roelcke precompact Polish groups}, 
	Israel J. of Math., to appear. 
	
	
	
	\bibitem{KPT}
	A.S. Kechris, V.G. Pestov, and S. Todor\u{c}evi\'{c},
	Fra\"iss\'e limits, Ramsey theory, and
	topological dynamics of automorphism groups, Geom. Funct. Anal. \textbf{15} (2005), no. 1, 106-189.
	
	
	
	\bibitem{KL05}
D. Kerr and H. Li,
{\it Dynamical entropy in Banach spaces}, Invent. Math.
{\bf 162}  (2005), 649-686.
	
	
	\bibitem{KL}
	D. Kerr and H. Li,
	{\it Independence in topological and $C^*$-dynamics}, Math. Ann.
	{\bf 338}  (2007), 869-926.
	
	\bibitem{Kok}
	H. Kok, \emph{Connected orderable spaces},
	Math. Centhre Tracts {\bf 49}, Mathematisch Centrum, Amsterdam, 1973.
	
	\bibitem{Koh}
	A. K\"{o}hler, {\em Enveloping semigrops for flows}, Proc.
	of the Royal Irish Academy, {\bf 95A} (1995), 179--191.
	
	
	
	
	
	\bibitem{Mal}
	A. V. Malyutin,
	{\em Classification of the group actions on the real line and circle},
	Algebra i analiz, Tom 19, (2007), N. 2.
	St. Petersburg Math. J. Vol 19 (2008), 
	No. 2,  279--296.
	
	
	\bibitem{Ma}
	N. G. Markley,
	{\em Homeomorphisms of the circle without periodic points},
	Proc. London Math. Soc, {\bf (3)} 20, (1970), 688-698.
	
	
	\bibitem{mc}
	G. Martinez-Cervantes, \emph{On weakly Radon-Nikodym compact spaces,} ArXiv 1509.05324, September, 2015.
	
	\bibitem{Masui} 
	K. Masui, \emph{Denjoy systems and substitutions}, Tokyo J. Math.
	{\bf 32:1} (2009), 33-53. 
	
	
	
	
	
	
	\bibitem{Me-Helly}
	M. Megrelishvili, \emph{A note on tameness of families having bounded variation}, ArXiv, 2014. 
	
	
	\bibitem{MH} 
	M. Morse, G. A. Hedlund, 
	\emph{Symbolic Dynamics II. Sturmian Trajectories},
	American J. of Math., Vol. 62, No. 1 (1940), pp. 1-42.  
	
	\bibitem{Nach}
	L. Nachbin, \emph{Topology and order,} Van Nostrand Math. Studies, Princeton, New Jersey, 1965.
	
	\bibitem{N}
	I. Namioka, {\em Radon-Nikod\'ym compact spaces and
		fragmentability\/}, Mathematika {\bfseries 34} (1987), 258-281.
	
	
	
	
	\bibitem{Pikula}
	R. Pikula,
	\emph{Enveloping semigroups of affine skew products and Sturmian-like systems}, Dissertation,
	The Ohio State University, 2009.
	
	\bibitem{Pest98}
	V.G. Pestov,
	{\em On free actions, minimal flows, and a problem by Ellis},
	Trans. Amer. Math. Soc., {\bf 350} (1998),  4149-4165.
	
	\bibitem{PestBook}
	V.G. Pestov, \emph{Dynamics of infinite-dimensional groups. The
		Ramsey-Dvoretzky-Milman phenomenon.} University Lecture Series, {\bf 40}. American Mathematical Society, Providence, RI, 2006.
	
	\bibitem{Po}
	H. Poincar\'{e}, 
	{\em Sur les courbes d\'{e}finies par les \'{e}quations diff\'{e}rentielles}, 
	J. Math. Pures Appl. {\bf (4)}, 1, (1885), 167--224.
	
	
	


\bibitem{Ro} H.P. Rosenthal,
\emph{A characterization of Banach spaces containing $l_1$}, Proc.
Nat. Acad. Sci. U.S.A., \textbf{71} (1974), 2411--2413.

	
	
	
	\bibitem{Rom} 
	A.V. Romanov, 
	\emph{Ergodic properties of discrete dynamical
		systems and enveloping semigroups}, 
	Ergod. Th. \& Dynam. Sys. {\bfseries 36} (2016), 198-214. 
	
\bibitem{Tal}
M. Talagrand, \emph{Pettis integral and measure theory,} Mem. AMS No.
\textbf{51}, 1984.
	
	
	
	\bibitem{TodBook}
	S. Todor\u{c}evi\'{c}, {\em Topics in topology}, Lecture Notes in
	Mathematics, {\bfseries 1652}, Springer-Verlag, 1997.
	
	
	
	\bibitem{UspComp}
	V.V. Uspenskij, {\em Compactifications of topological groups},
	Proceedings of the Ninth Prague Topological Symposium (Prague,
	August 19--25, 2001). Edited by P. Simon. Published April 2002 by
	Topology Atlas (electronic publication). Pp. 331-346,
	ArXiv:math.GN/0204144. 
	
	\bibitem{van-the}
	L. Nguyen van Th\'{e},
	{\em More on the Kechris-Pestov-Todorcevic correspondence:
		precompact expansions}, 
	Arxiv: 1201.1270v3.
	
	
	
\end{thebibliography}

\end{document}